\definecolor{ffffff}{rgb}{1.0,1.0,1.0}
\definecolor{qqqqff}{rgb}{0.0,0.0,1.0}
\definecolor{ffqqqq}{rgb}{1.0,0.0,0.0}
\definecolor{zzzzqq}{rgb}{0.6,0.6,0.0}
\newcommand*\circled[1]{\tikz[baseline=(char.base)]{
            \node[shape=circle,draw,inner sep=2pt] (char) {#1};}}
\newcommand*\squared[1]{\tikz[baseline=(char.base)]{
            \node[shape=rectangle,draw,inner sep=2.4pt] (char) {#1}; \node[shape=rectangle,draw,inner sep=1pt] (char) {#1};}}
\newcommand{\C}{{\mathbb C}}       
\newcommand{\R}{{\mathbb R}}       
\newcommand{\N}{{\mathbb N}}       
\newcommand{\Z}{{\mathbb Z}}       
\newcommand{\DDD}{{\mathbb D}}
\newcommand{\diam}{{\rm diam}}
\newcommand{\dist}{{\rm dist}}
\newcommand{\real}{{\rm Re \,}}
\newcommand{\imag}{{\rm Im}}
\newcommand{\rf}[1]{{(\ref{#1})}}
\newcommand{\supp}{{\rm supp}}
\newcommand{\Beurling}{{\mathcal B}}
\newcommand{\Cauchy}{{\mathcal C}}
\newcommand{\norm}[1]{{\left\| {#1} \right\|}}
\newtheorem{theorem}{Theorem}
\newtheorem*{theorem*}{Theorem}
\newtheorem{lemma}[theorem]{Lemma}
\newtheorem{claim}[theorem]{Claim}
\newtheorem{corollary}[theorem]{Corollary}
\newtheorem*{corollary*}{Corollary}
\newtheorem{proposition}[theorem]{Proposition}
\newtheorem{definition}[theorem]{Definition}
\newtheorem{example}[theorem]{Example}
\newtheorem{remark}[theorem]{Remark}
\numberwithin{subsection}{section}
\numberwithin{theorem}{section}
\numberwithin{equation}{section}
\numberwithin{figure}{section}
\title{Sobolev regularity of the Beurling transform on planar domains}
\author{Mart\'i Prats
\thanks{Departament de Ma\-te\-m\`a\-ti\-ques, Universitat Aut\`onoma de Bar\-ce\-lo\-na, Catalonia, currently at Departamento de Ma\-te\-m\'a\-ti\-cas, Universidad Aut\'onoma de Madrid, Spain: \texttt{marti.prats@uam.es}.}}
\begin{document}
\maketitle
\bibliographystyle{alpha}

\begin{abstract} 
Consider a Lipschitz domain $\Omega$ and the Beurling transform of its characteristic function $\Beurling \chi_\Omega(z)= - {\rm p.v.}\frac1{\pi z^2}*\chi_\Omega (z) $. It is shown that if the outward unit normal vector $N$ of the boundary of the domain is in the trace space of $W^{n,p}(\Omega)$ (i.e., the Besov space $B^{n-1/p}_{p,p}(\partial\Omega)$) then $\mathcal{B} \chi_\Omega \in W^{n,p}(\Omega)$. Moreover, when $p>2$ the boundedness of the Beurling transform on $W^{n,p}(\Omega)$ follows. This fact has far-reaching consequences in the study of the regularity of quasiconformal solutions of the Beltrami equation.

2010 Mathematics Subject Classification: 30C62, 42B37, 46E35.

Keywords: Quasiconformal mappings, Sobolev spaces, Lipschitz domains, Beurling transform, David-Semmes betas, Peter Jones' betas. 
\end{abstract}

\section{Introduction}
Given a function $g\in L^p$, its Beurling transform is defined as
\begin{equation*}
\Beurling g(z):=\lim_{\varepsilon\to 0} \frac{-1}{\pi}\int_{|w-z|>\varepsilon} \frac{g(w)}{(z-w)^2}dm(w) \mbox{\quad\quad for almost every }z\in \C.
\end{equation*}
The Beurling transform is a bounded operator on $L^p$  for $1<p<\infty$ and, since it is a convolution operator, it is also bounded on the Sobolev space $W^{n,p}$ for $n\in\N$, that is, the space of functions with weak derivatives up to order $n$ in $L^p$. However, given a domain $\Omega$, the Beurling transform restricted to the domain $\Beurling_\Omega:=\chi_\Omega\Beurling(\chi_\Omega\cdot)$ is not bounded on $W^{n,p}(\Omega)$ in general, although some conditions on the regularity of the boundary of $\Omega$ can make it happen.

Consider for example the Beurling transform of the characteristic function of a square $Q$ with vertices $w_i$ for $i\in\{1,2,3,4\}$. Then, for every $z\in\Omega$ we have that $\Beurling \chi_Q (z)=\sum_i a_i \log(z-w_i)$ for some $a_i \in \C$ (see \cite[formula (4.122)]{AstalaIwaniecMartin}, for instance). Then,  $\partial \Beurling \chi_Q (z)=\sum_i a_i \frac{1}{z-w_i}$ which is not in $L^p$ for $p\geq 2$. For $n\geq 2$, the $n$-th derivative satisfies $\left|\partial^n\Beurling \chi_Q (z)\right|\approx\sum \frac{1}{|z-w_i|^n}$ which is not in $L^p$ for any $p\geq 1$. Of course, this implies that $\Beurling_Q$ is not bounded on $W^{1,p}(Q)$ for $p\geq 2$ neither on $W^{n,p}(Q)$ for $p\geq 1$ and $n\geq 2$. The interested reader may find a discussion on the case $p<2$, $n=1$ in \cite{PratsTolsa}. That paper treats also the case of the domain being the unit disk $\DDD$, when $\Beurling_\DDD$ is bounded in every Sobolev space $W^{n,p}(\DDD)$ with $1<p<\infty$. It is clear that the regularity of the boundary of a domain $\Omega$ plays a crucial role in determining whether the restricted Beurling transform is bounded or not on $W^{n,p}(\Omega)$.

In \cite{CruzMateuOrobitg}  Cruz, Mateu and Orobitg proved a $T(1)$-theorem for domains with parameterizations of the boundary of $\Omega$ in $C^{1,\varepsilon}$ with $0<\varepsilon<1$ that grants the boundedness of $\Beurling_\Omega$ in the Sobolev space $W^{s,p}(\Omega)$  if $\Beurling_\Omega1=\chi_\Omega\Beurling\chi_\Omega \in W^{s,p}(\Omega)$ for $0<s\leq 1$ and $1<p<\infty$ with $s p>2$ (the Sobolev space is defined via the Bessel potential for $s\notin \N$).
Moreover, they showed that when $0<s<\varepsilon<1$ and $1<p<\infty$ one has that $\Beurling_\Omega1 \in W^{s,p}(\Omega)$ by means of some results from \cite{MateuOrobitgVerdera}.

In \cite{PratsTolsa}, Tolsa and the author of the present text presented a $T(P)$-theorem for $W^{n,p}(\Omega)$ which is valid for Lipschitz domains (and uniform domains as well) when $n\in\N$ and $p>2$, granting the boundedness of $\Beurling_\Omega$ on $W^{n,p}(\Omega)$ if $\Beurling_\Omega P \in W^{n,p}(\Omega)$ for every polynomial $P$ of degree smaller than $n$.

Cruz and Tolsa proved in \cite{CruzTolsa} that for $0<s\leq1$, $1<p<\infty$ with $sp>1$, if the outward unit normal vector $N$ is in the Besov space ${B}^{s-1/p}_{p,p}(\partial\Omega)$ (see Section \ref{secSpaces}) then $\Beurling\chi_\Omega\in W^{s,p}(\Omega)$.
This condition is necessary for Lipschitz domains with small Lipschitz constant (see \cite{TolsaSharp}). 
Moreover, being $N\in B^{s-1/p}_{p,p}(\partial\Omega)$ implies the parameterizations of the boundary of $\Omega$ to be in $B^{s+1-1/p}_{p,p}$ and, for  $sp>2$, the parameterizations are in $C^{1,s-2/p}$ by the Sobolev Embeding Theorem. In that situation, one can use the $T(1)$ result in \cite{CruzMateuOrobitg} to deduce the boundedness of the Beurling transform in $W^{s,p}(\Omega)$.

In this article we prove that the result in \cite{CruzTolsa} holds for $s\in \N$:
\begin{theorem}\label{theoGeometricVeryNaive}
Let $p>1$, let $n\in \N$ and let $\Omega$ be a bounded  Lipschitz domain with parameterizations in $C^{n-1,1}$ and with  $N\in B^{n-1/p}_{p,p}(\partial\Omega)$. Then,  we have that
\begin{equation*}
\norm{\Beurling(\chi_\Omega)}_{W^{n,p}(\Omega)}\leq C \norm{N}_{B^{n-1/p}_{p,p}(\partial\Omega)},
\end{equation*}
where $C$ depends on $p$, $n$, $\diam(\Omega)$ and the Lipschitz character of the domain.
\end{theorem}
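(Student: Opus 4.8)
The idea is to move all derivatives onto the boundary, reduce to a Whitney-type estimate in $\Omega$, and sum the boundary contributions at every scale against $\norm{N}_{B^{n-1/p}_{p,p}}$, the geometry of $\partial\Omega$ entering through (higher order) David--Semmes/Jones $\beta$-coefficients. Since $\bar\partial\,\Beurling\chi_\Omega=\partial\chi_\Omega$ is supported on $\partial\Omega$, $\Beurling\chi_\Omega$ is holomorphic in $\Omega$, so all its mixed weak derivatives $\partial^a\bar\partial^b$ with $b\ge1$ vanish in $\Omega$ and only the pure derivatives $\partial^k\Beurling\chi_\Omega$, $0\le k\le n$, need be controlled. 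For $k=0$ this is classical: $\norm{\Beurling\chi_\Omega}_{L^p(\Omega)}\lesssim|\Omega|^{1/p}\lesssim\diam(\Omega)^{1/p}\norm{N}_{B^{n-1/p}_{p,p}}$, using $|N|\equiv1$. For $1\le k\le n$, a Cauchy--Pompeiu computation applied to the Cauchy transform $\Cauchy\chi_\Omega$, followed by differentiation in $z$, gives for $z\in\Omega$ and a dimensional constant $c_k\ne0$
\begin{equation*}
\partial^k\Beurling\chi_\Omega(z)=c_k\int_{\partial\Omega}\frac{d\bar w}{(z-w)^{k+1}}=c_k\int_0^{L}\frac{\overline{\gamma'(\sigma)}}{(z-\gamma(\sigma))^{k+1}}\,d\sigma ,
\end{equation*}
where $\gamma\colon[0,L)\to\partial\Omega$ is the arclength parametrisation; by hypothesis $\gamma\in C^{n-1,1}$ and $\gamma'\in B^{n-1/p}_{p,p}(\partial\Omega)$ (recall $N=\pm i\gamma'$).

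\textbf{Step 2: Whitney decomposition and the key pointwise bound.} Fix a Whitney decomposition $\{Q\}$ of $\Omega$ with $\ell(Q)\approx\dist(Q,\partial\Omega)$; for $z\in Q$ pick a closest boundary point $z_Q$, and for $j\ge0$ put $I_j=\gamma^{-1}(B(z_Q,2^{j}\ell(Q))\cap\partial\Omega)$, $R_j=I_j\setminus I_{j-1}$ (with $R_0=I_0$), and
\begin{equation*}
\alpha_j(Q)=2^{j}\ell(Q)\,\inf_{\deg\pi\le n-1}\norm{\gamma'-\pi}_{L^p(I_j)} .
\end{equation*}
Let $\pi_j$ realise this infimum and $P_j$ be an antiderivative of $\pi_j$ --- a polynomial curve of degree $\le n$, nondegenerate near $z_Q$ since $|\gamma'|\equiv1$. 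Two mechanisms drive the estimate. First, a \emph{cancellation}: for $z\in\Omega$ one has $\int_{\R}\overline{P_j'(t)}\,(z-P_j(t))^{-(k+1)}\,dt=0$ --- a residue computation, the integrand decaying like $|t|^{-2}$ or faster and the $\le n$ finite preimages of $z$ under the model curve all lying in one half-plane because $z$ is on the correct side of it; the spurious far roots of the polynomial model are discarded by truncating it at scale $\approx2^j\ell(Q)$. Second, on $R_j$ one has $|z-w|\approx2^j\ell(Q)$ and the kernel difference bound $|\overline{\gamma'}(z-\gamma)^{-(k+1)}-\overline{P_j'}(z-P_j)^{-(k+1)}|\lesssim|\gamma'-P_j'|(2^j\ell(Q))^{-(k+1)}+|\gamma-P_j|(2^j\ell(Q))^{-(k+2)}$. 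Subtracting the model integrals scale by scale --- their leading parts cancelling, via the residue identity, against the integral over the largest-scale model --- telescoping the $P_j$ (each difference $P_i-P_{i+1}$ is again a degree $\le n$ polynomial curve), using Hölder on $R_j$ and the Poincaré bound $\norm{\gamma-P_j}_{L^p(I_j)}\lesssim|I_j|\,\norm{\gamma'-\pi_j}_{L^p(I_j)}$, and summing a geometric series, one obtains
\begin{equation*}
|\partial^k\Beurling\chi_\Omega(z)|\lesssim\ell(Q)^{-(k+1)-1/p}\sum_{j\ge0}2^{-j((k+1)+1/p)}\,\alpha_j(Q),\qquad z\in Q .
\end{equation*}

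\textbf{Step 3: summation over the decomposition.} Since $|Q|=\ell(Q)^2$, the pointwise bound and Hölder in $j$ (retaining all but an arbitrarily small fraction of the geometric weight inside the $\alpha_j^p$-sum) give $\norm{\partial^k\Beurling\chi_\Omega}_{L^p(\Omega)}^p\lesssim\sum_Q\ell(Q)^{1-(k+1)p}\sum_j2^{-j\mu}\alpha_j(Q)^p$ for some $\mu\in((k+1)p,(k+1)p+1)$. Now reindex $(Q,j)$ by boundary balls $B$: a ball of radius $r_B=2^j\ell(Q)$ receives $\approx2^j$ Whitney cubes of side $2^{-j}r_B$, and $\alpha_j(Q)\approx\alpha(B):=r_B\inf_{\deg\pi\le n-1}\norm{\gamma'-\pi}_{L^p(B\cap\partial\Omega)}$. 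As $\mu>(k+1)p$ the inner series converges, and since $r_B\le\diam(\Omega)$,
\begin{equation*}
\norm{\partial^k\Beurling\chi_\Omega}_{L^p(\Omega)}^p\lesssim\sum_B\frac{\alpha(B)^p}{r_B^{(k+1)p-1}}\lesssim\diam(\Omega)^{1+(n-k)p}\sum_B\frac{\bigl(\inf_{\deg\pi\le n-1}\norm{\gamma'-\pi}_{L^p(B)}\bigr)^p}{r_B^{np}}\approx\diam(\Omega)^{1+(n-k)p}\,[N]_{B^{n-1/p}_{p,p}(\partial\Omega)}^p ,
\end{equation*}
the last equivalence being the Dorronsoro-type characterisation of Besov norms through local polynomial approximation (for $\gamma'$, hence for $N$). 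Summing over $0\le k\le n$ proves the theorem.

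\textbf{Main obstacle.} Step 2 is the hard part. The delicate points: (i) making the cancellation identity for the polynomial models rigorous while disposing of the unwanted global behaviour of a high-degree model curve (truncation/patching at each scale, kept consistent through the telescoping); (ii) the low regularity $\gamma\in C^{n-1,1}$, which precludes pointwise Taylor remainders and forces all flatness --- and hence the whole argument --- to be run in $L^p$-average, bringing in the Dorronsoro characterisation; (iii) the exact bookkeeping in the reindexing of Whitney cubes by boundary balls, so that the summed exponents land precisely on $B^{n-1/p}_{p,p}$ rather than a neighbouring space. A subsidiary point is the transfer of $N\in B^{n-1/p}_{p,p}(\partial\Omega)$ to the equivalent polynomial-approximation form of the norm on the merely $C^{n-1,1}$ curve $\partial\Omega$.
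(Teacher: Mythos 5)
Your overall architecture is the same as the paper's: approximate the boundary at each Whitney scale by a polynomial model, exploit a cancellation for the model, control the discrepancy by Dorronsoro-type $\beta$-coefficients, and sum. The reduction to pure $\partial^k$ derivatives and Step 3's bookkeeping are consistent with what the paper does (Lemma \ref{remDiffCharacteristic}, \rf{eqNormBetas}, Lemma \ref{lemNormBeta}). But the central assertion of Step 2 is false for $n\geq 2$, and it is exactly where the paper's main technical work lives.

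The claimed identity $\int_{\R}\overline{P_j'(t)}\,(z-P_j(t))^{-(k+1)}\,dt=0$ holds when $P_j$ is affine (then $d\bar w=e^{-2i\theta}\,dw$ along the model line and the integrand is an exact differential), which is why the $n=1$ case of \cite{CruzTolsa} works; but for a genuine degree-$\geq 2$ model the conjugation destroys exactness, and the integral is in general nonzero. The paper states this explicitly in the introduction and its Proposition \ref{propoPoly} proves only a \emph{bound} $C_n\rho^{-n}(1+\epsilon)^{|\gamma|}$ for the model integral, not vanishing. Your residue heuristic does not repair this: the degree-$n$ model curve has up to $n$ preimages of $z$ which need not lie in one half-plane, and once you ``discard the spurious far roots by truncating the model at scale $2^j\ell(Q)$'' you have replaced a closed-contour identity by an integral over a finite arc, which picks up endpoint contributions of size $\approx\rho^{-n}$ — precisely the terms \rf{eqIntegralNonLocal} and \rf{eqIntegralBoundaryQ} of the paper, which must then be shown to be absorbable (they are, because they are integrated over a set of measure $\lesssim\rho^2$, giving the $\rho_\epsilon^{2-np}$ term in Theorem \ref{mtheorem}). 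Moreover, locating the one genuine preimage on the correct side and ruling out others inside the truncation window is itself nontrivial at low regularity; the paper needs the flatness estimates \rf{eqBoundCoefficients} and the Complex Rolle Theorem \ref{theoRolle} to get injectivity of $\tau\mapsto\tau+iP(\tau)$ in the window before it can deform the contour. Without supplying this mechanism — bounding rather than killing the model term, and tracking the resulting non-$\beta$ error through the telescoping — your Step 2 does not close, so the proposal has a genuine gap at its core step.
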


 The proof presented here will be slightly more tricky since we will need to approximate the boundary of the domain by polynomials instead of straight lines. The derivative of the Beurling transform of the characteristic function of a half-plane is zero out of its boundary (see \cite{CruzTolsa}), but the derivative of the Beurling transform of the characteristic function of a domain bounded by a polynomial of degree greater than one is not zero anymore in general. 

Using the $T(P)$-theorem of \cite{PratsTolsa} this will suffice to see the boundedness of the Beurling transform.
\begin{theorem}\label{theoGeometricNaive}
Let  $2<p<\infty$, let $n\in \N$ and let $\Omega$ be a bounded Lipschitz domain with  $N\in B^{n-1/p}_{p,p}(\partial\Omega)$. Then, for every $f\in W^{n,p}(\Omega)$ we have that
\begin{equation*}
\norm{\Beurling(\chi_\Omega f)}_{W^{n,p}(\Omega)}\leq C \norm{N}_{B^{n-1/p}_{p,p}(\partial\Omega)}\norm{f}_{W^{n,p}(\Omega)},
\end{equation*}
where $C$ depends on $p$, $n$, $\diam(\Omega)$ and the Lipschitz character of the domain.
\end{theorem}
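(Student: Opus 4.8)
The plan is to deduce Theorem~\ref{theoGeometricNaive} from Theorem~\ref{theoGeometricVeryNaive} by means of the $T(P)$-theorem of \cite{PratsTolsa}. Since $2<p<\infty$ and $\Omega$ is a bounded Lipschitz domain, that theorem reduces the asserted inequality to the statement
\begin{equation}\label{eqPlanTP}
\chi_\Omega\,\Beurling(\chi_\Omega\, w^a\bar w^b)\in W^{n,p}(\Omega),\qquad \norm{\chi_\Omega\,\Beurling(\chi_\Omega\, w^a\bar w^b)}_{W^{n,p}(\Omega)}\lesssim \norm{N}_{B^{n-1/p}_{p,p}(\partial\Omega)},
\end{equation}
for every monomial $w^a\bar w^b$ with $a+b\le n-1$ (by linearity this covers every polynomial of degree $<n$, with a constant depending on $n$); the bound on the operator norm furnished by the $T(P)$-theorem is then controlled by $1$ plus the maximum of the quantities in \eqref{eqPlanTP}, and since $\norm{N}_{B^{n-1/p}_{p,p}(\partial\Omega)}\gtrsim 1$ for a fixed Lipschitz domain, the additive constant is harmless. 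I would prove \eqref{eqPlanTP} by induction on $n$: for $n=1$ only $a=b=0$ occurs, and this is precisely Theorem~\ref{theoGeometricVeryNaive}. In the inductive step I assume \eqref{eqPlanTP}, hence (by the $T(P)$-theorem) the boundedness of $\Beurling_\Omega$ on $W^{k,p}(\Omega)$ with norm $\lesssim\norm{N}_{B^{k-1/p}_{p,p}(\partial\Omega)}$, for every $k<n$; this is legitimate because $\partial\Omega$ is compact, so $N\in B^{n-1/p}_{p,p}(\partial\Omega)\hookrightarrow B^{k-1/p}_{p,p}(\partial\Omega)$.

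The first step of the inductive argument is to strip off the holomorphic variable. Normalising the Cauchy transform $\Cauchy$ so that $\bar\partial\Cauchy=\mathrm{Id}$ and $\partial\Cauchy=\Beurling$, and using $w^a-z^a=(w-z)\sum_{i=0}^{a-1}w^iz^{a-1-i}$ together with $\frac{w-z}{(z-w)^2}=\frac{-1}{z-w}$, one obtains for $z\in\Omega$
\begin{equation}\label{eqPlanPeel}
\Beurling(\chi_\Omega\, w^a\bar w^b)(z)=\sum_{i=0}^{a-1}z^{a-1-i}\,\Cauchy(\chi_\Omega\, w^i\bar w^b)(z)+z^a\,\Beurling(\chi_\Omega\,\bar w^b)(z).
\end{equation}
For each term in the sum, $w^i\bar w^b$ restricts to a smooth bounded function on $\Omega$ and $i+b\le (a+b)-1\le n-2$, so by the inductive hypothesis $\chi_\Omega\Beurling(\chi_\Omega w^i\bar w^b)\in W^{n-1,p}(\Omega)$ with norm $\lesssim\norm{N}_{B^{n-1/p}_{p,p}(\partial\Omega)}$; combining $\bar\partial\Cauchy(\chi_\Omega w^i\bar w^b)=\chi_\Omega w^i\bar w^b$, $\partial\Cauchy(\chi_\Omega w^i\bar w^b)=\Beurling(\chi_\Omega w^i\bar w^b)$ with the standard estimate $\norm{u}_{W^{n,p}(\Omega)}\lesssim\norm{u}_{L^p(\Omega)}+\norm{\partial u}_{W^{n-1,p}(\Omega)}+\norm{\bar\partial u}_{W^{n-1,p}(\Omega)}$ on bounded Lipschitz (or uniform) domains gives $\Cauchy(\chi_\Omega w^i\bar w^b)\in W^{n,p}(\Omega)$ with the bound in \eqref{eqPlanTP}, and multiplying by the smooth $z^{a-1-i}$ is harmless. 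Hence only the last term of \eqref{eqPlanPeel} is genuinely new, and it remains to estimate $\Beurling(\chi_\Omega\bar w^b)$ in $W^{n,p}(\Omega)$ for $0\le b\le n-1$.

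For this last piece I would pass to a potential on the boundary. Writing $\bar w^b=\bar\partial_w\!\big(\tfrac{\bar w^{b+1}}{b+1}\big)$, using $\chi_\Omega\bar\partial u=\bar\partial(\chi_\Omega u)-(\bar\partial\chi_\Omega)\,u$, the identity $\Beurling\bar\partial T=\partial T$ valid for every compactly supported distribution $T$, the fact that $\partial_w\bar w^{b+1}=0$, and the representations $\bar\partial\chi_\Omega=c\,N\,\mathcal H^1|_{\partial\Omega}$, $\partial\chi_\Omega=c\,\bar N\,\mathcal H^1|_{\partial\Omega}$ (with a fixed constant $c$), one arrives at the distributional identity $\Beurling(\chi_\Omega\bar w^b)=c_b\big[\Beurling(N\bar w^{b+1}\mathcal H^1|_{\partial\Omega})-\bar N\bar w^{b+1}\mathcal H^1|_{\partial\Omega}\big]$ on $\C$ for an explicit constant $c_b$; restricting to the open set $\Omega$, where $\mathcal H^1|_{\partial\Omega}$ vanishes, this becomes
\begin{equation}\label{eqPlanBdry}
\Beurling(\chi_\Omega\,\bar w^b)(z)=c_b\,\Beurling\!\big(N\,\bar w^{b+1}\,\mathcal H^1|_{\partial\Omega}\big)(z),\qquad z\in\Omega
\end{equation}
(for $b=0$ this is exactly the representation of $\Beurling\chi_\Omega$ on $\Omega$ underlying Theorem~\ref{theoGeometricVeryNaive}). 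I would then finish by the estimate $\norm{\Beurling(h\,\mathcal H^1|_{\partial\Omega})}_{W^{n,p}(\Omega)}\lesssim\norm{h}_{B^{n-1/p}_{p,p}(\partial\Omega)}$, applied with $h=N\bar w^{b+1}$: multiplication by the smooth function $\bar w^{b+1}$ is bounded on $B^{n-1/p}_{p,p}(\partial\Omega)$ with norm depending only on $n$ and $\diam(\Omega)$, so $\norm{N\bar w^{b+1}}_{B^{n-1/p}_{p,p}(\partial\Omega)}\lesssim\norm{N}_{B^{n-1/p}_{p,p}(\partial\Omega)}$, closing the induction.

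The hard part will be this last estimate, $\norm{\Beurling(h\,\mathcal H^1|_{\partial\Omega})}_{W^{n,p}(\Omega)}\lesssim\norm{h}_{B^{n-1/p}_{p,p}(\partial\Omega)}$ for general $h$ in the trace space: it is what the proof of Theorem~\ref{theoGeometricVeryNaive} delivers in the case $h=N$, and since that argument is a local, scale-by-scale analysis on $\partial\Omega$ in terms of the David--Semmes and Peter Jones beta coefficients which does not exploit the algebraic structure $h=N$, I expect it to carry over to arbitrary $h\in B^{n-1/p}_{p,p}(\partial\Omega)$ --- but verifying this, and keeping the bookkeeping tight enough that every constant appearing along the induction depends only on $p$, $n$, $\diam(\Omega)$ and the Lipschitz character (so that, using $\norm{N}_{B^{n-1/p}_{p,p}(\partial\Omega)}\gtrsim 1$, they all collapse into a single factor $\norm{N}_{B^{n-1/p}_{p,p}(\partial\Omega)}$), is the real work. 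Two minor points also deserve care: justifying the distributional identity behind \eqref{eqPlanBdry} at the diagonal $w=z$ (the Green's-formula boundary term around $w=z$ vanishes because $\bar w^{b+1}$ vanishes there to order $\ge 2$), and checking that replacing arc length $\mathcal H^1|_{\partial\Omega}$ by the complex line element $dw$ only multiplies $h$ by the unit tangent vector, which lies in $B^{n-1/p}_{p,p}(\partial\Omega)$ together with $N$ and can be absorbed.
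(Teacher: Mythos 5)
Your outer frame agrees with the paper's: both proofs invoke the quantitative $T(P)$-theorem of \cite{PratsTolsa} to reduce the boundedness of $\Beurling_\Omega$ on $W^{n,p}(\Omega)$ to the membership $\Beurling_\Omega P_\lambda\in W^{n,p}(\Omega)$ for monomials of degree $<n$, and the absorption of additive constants via $\norm{N}_{B^{n-1/p}_{p,p}(\partial\Omega)}\gtrsim 1$ is also how the paper argues. The peeling identity \eqref{eqPlanPeel} and the induction through the Cauchy transform (using $\bar\partial\Cauchy=\mathrm{Id}$, $\partial\Cauchy=\Beurling$ and the norm equivalence \rf{eqEquivalenceNormsSobolev}) are sound. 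The problem is the step you yourself flag as ``the real work'': the estimate $\norm{\Beurling(h\,\mathcal H^1|_{\partial\Omega})}_{W^{n,p}(\Omega)}\lesssim\norm{h}_{B^{n-1/p}_{p,p}(\partial\Omega)}$ for \emph{general} $h$, with a constant depending only on the Lipschitz character, is false, so the reduction cannot be closed as proposed. Take $\Omega$ the unit square and $h\equiv 1$: the right-hand side is finite (the Besov seminorm of a constant vanishes), while $\Beurling(\mathcal H^1|_{\partial Q})(z)$ is an explicit nonzero linear combination of $\frac{1}{z-v_i}$ over the vertices $v_i$ (integrate $\frac{1}{(z-w)^2}$ over each side), hence not even in $L^p(Q)$ for $p\geq 2$. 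Your justification for expecting the general-$h$ estimate --- that the proof of Theorem \ref{theoGeometricVeryNaive} ``does not exploit the algebraic structure $h=N$'' --- is exactly backwards: that structure is what the paper's argument lives on. Writing $N\,d\mathcal H^1=c\,d\bar w$, the integrand on $\partial\Omega^n_Q$ extends holomorphically off the real axis, which is what permits the contour deformation in Proposition \ref{propoPoly} and the resulting cancellation for polynomial-bounded domains; for a generic density $h$ there is no such cancellation, and any correct version of your key estimate must carry a factor involving the regularity of $\partial\Omega$ itself (e.g.\ $\norm{h}_{B^{n-1/p}_{p,p}}(1+\norm{N}_{B^{n-1/p}_{p,p}})$), the proof of which would require redoing Sections \ref{secCharacteristicSpecial}--\ref{secPoly} for polynomial densities --- none of which is sketched.

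For comparison, the paper avoids the layer-potential formulation entirely: it expands $w^\lambda=\sum_{\nu\leq\lambda}(-1)^{|\nu|}\binom{\lambda}{\nu}(z-w)^\nu z^{\lambda-\nu}$, so that $T^\gamma_\Omega P_\lambda$ becomes a combination of $z^{\lambda-\nu}\,T^{\gamma+\nu}_\Omega 1$. The antiholomorphic factor $\bar w^{\lambda_2}$ is thus pushed into the \emph{kernel} (yielding iterates of the Beurling transform, still acting on $\chi_\Omega$) rather than into the boundary density, and Lemma \ref{remDiffCharacteristic} plus Theorem \ref{mtheorem} --- proved for the whole family $\gamma_1+\gamma_2=-n-2$, $\gamma_1\gamma_2\leq 0$ --- handle all the resulting terms at once. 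If you want to rescue your route, the repair for the $\Beurling(\chi_\Omega\bar w^b)$ terms is to keep the identity $N\bar w^{b+1}\mathcal H^1|_{\partial\Omega}=c\,\bar w^{b+1}\,d\bar w$ and run the polynomial-approximation argument on $\int_{\partial\Omega}\frac{\bar w^{b+1}}{(z-w)^{n+2}}\,d\bar w$; but that is precisely the computation of Proposition \ref{propoPoly}, i.e.\ you are led back to the paper's kernel family rather than to a general boundary-density estimate.
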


Both theorems above are particular cases of Theorems  \ref{theoGeometric} and \ref{theoGeometricPGtr2}, which cover a wider family of operators including the Beurling transform and its iterates $\Beurling^m$, showing that the constants have exponential growth with respect to $m$ with base as close to $1$ as desired. This has far-reaching consequences in quasiconformal mappings. 

Indeed, let $\mu\in L^\infty$ supported in a certain ball $B\subset \C$ with $\norm{\mu}_{L^\infty}<1$. We say that $f$ is a quasiregular solution to the Beltrami equation
\begin{equation}\label{eqBeltrami}
\bar{\partial} f =\mu\, \partial f
\end{equation}
with Beltrami coefficient $\mu$ if $f\in W^{1,2}_{loc}$, that is, if $f$ and $\nabla f$ are square integrable functions in any compact subset of $\C$, and $\bar{\partial} f (z)=\mu(z) \partial f(z)$ for almost every $z\in\C$. Such a function $f$ is said to be a quasiconformal mapping if it is a homeomorphism of the complex plane. If, moreover, $f(z)=z+\mathcal{O}(\frac1z)$ as $z\to\infty$, then we say that $f$ is the principal solution to \rf{eqBeltrami}. 

Given a compactly supported Beltrami coefficient $\mu$, the existence and uniqueness of the principal solution is granted by the measurable Riemann mapping Theorem (see \cite[Theorem 5.1.2]{AstalaIwaniecMartin}, for instance). The operator $I-\mu \Beurling$ is invertible in $L^2$ and, if we call 
\begin{equation*}
h:=(I-\mu \Beurling)^{-1} \mu=\mu+\mu\Beurling(\mu)+\mu\Beurling (\mu\Beurling(\mu))+\cdots,
\end{equation*}
and $f$ is the principal solution of \rf{eqBeltrami} then $\bar\partial f= h$ and  $\partial f= \Beurling h +1$.

Let $n, m\in\N$ and $2<p<\infty$. In \cite{PratsQuasiconformal}, the author of the present article uses the results obtained here to show that if a domain  $\Omega$ satisfies the hypothesis of Theorem \ref{theoGeometricNaive} and a Beltrami coefficient $\mu\in W^{n,p}(\Omega)$, then  $\mu^m \Beurling^m$ is a bounded operator on $W^{n,p}(\Omega)$ with norm tending to zero as $m$ tends to infinity. This is used to show that $h\in W^{n,p}(\Omega)$ as well by means of Fredholm theory, giving place to the following remarkable result.
\begin{theorem*}[See \cite{PratsQuasiconformal}]
Let $n\in \N$, let $\Omega$ be a bounded Lipschitz domain with outward unit normal vector $N$ in $B^{n-1/p}_{p,p}(\partial\Omega)$ for some $2<p<\infty$ and let $\mu\in W^{n,p}(\Omega)$ with $\norm{\mu}_{L^\infty}<1$ and $\supp(\mu)\subset\overline{\Omega}$. Then, the operator 
$$(I_\Omega-\mu \Beurling_\Omega):f \mapsto \left(\chi_\Omega f -\mu \Beurling_\Omega( f)\right)$$
 is invertible in $W^{n,p}(\Omega)$ and the principal solution  $f$ to \rf{eqBeltrami} is in the Sobolev space $W^{n+1,p}(\Omega)$.
\end{theorem*}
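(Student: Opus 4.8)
The plan is to deduce the statement from the boundedness results of the present paper. Writing $\Beurling_\Omega=\chi_\Omega\Beurling(\chi_\Omega\,\cdot)$, the crux is the invertibility of $I_\Omega-\mu\Beurling_\Omega$ on $W^{n,p}(\Omega)$. Granting this, set $h:=(I_\Omega-\mu\Beurling_\Omega)^{-1}\mu$; since $\mu\in W^{n,p}(\Omega)$ we get $h\in W^{n,p}(\Omega)$, and because $W^{n,p}(\Omega)\hookrightarrow L^2(\Omega)$ (as $\Omega$ is bounded and $p>2$) while $\norm{\mu\Beurling_\Omega}_{L^2(\Omega)}\le\norm{\mu}_{L^\infty}<1$, this $h$ coincides with the unique $L^2$-solution of $h=\mu+\mu\Beurling_\Omega h$, i.e.\ with the density attached to the principal solution. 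Then $\bar\partial f=h\in W^{n,p}(\Omega)$, and since $\supp h\subset\overline\Omega$ we have $\partial f=1+\Beurling(\chi_\Omega h)$, whose restriction to $\Omega$ lies in $W^{n,p}(\Omega)$ by Theorem~\ref{theoGeometricNaive}. As $f$ is a homeomorphism it is bounded on the bounded set $\Omega$, so $\nabla f\in W^{n,p}(\Omega)$ together with $f\in L^\infty(\Omega)$ forces $f\in W^{n+1,p}(\Omega)$.

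To prove the invertibility I would first record two quantitative estimates. Since $p>2$, $W^{n,p}(\Omega)$ is a Banach algebra; moreover a Leibniz/Fa\`a di Bruno expansion of $\partial^\alpha(\mu^m)$ for $|\alpha|\le n$, in which at most $n$ of the $m$ factors get differentiated, yields $\norm{\mu^m}_{W^{n,p}(\Omega)}\le C(n,\mu)\,m^n\norm{\mu}_{L^\infty}^m$, hence geometric decay because $\norm{\mu}_{L^\infty}<1$. On the other hand, the iterate versions of our main theorems (Theorems~\ref{theoGeometric} and~\ref{theoGeometricPGtr2}) supply, for every $\varepsilon>0$, a constant $C_\varepsilon$ with $\norm{\chi_\Omega\Beurling^m(\chi_\Omega\,\cdot)}_{W^{n,p}(\Omega)\to W^{n,p}(\Omega)}\le C_\varepsilon(1+\varepsilon)^m$. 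Multiplying the two through the algebra property and choosing $\varepsilon$ with $(1+\varepsilon)\norm{\mu}_{L^\infty}<1$, one obtains $\norm{\mu^m\Beurling^m}_{W^{n,p}(\Omega)\to W^{n,p}(\Omega)}\le C\,m^n\big((1+\varepsilon)\norm{\mu}_{L^\infty}\big)^m\to 0$.

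The passage from this decay to invertibility is a Fredholm argument. Fix a large $N$ and expand $(\mu\Beurling_\Omega)^N$ by repeatedly applying $\Beurling(g\,\cdot)=g\Beurling(\cdot)+[\Beurling,g](\cdot)$; the outcome is $\mu^N$ times $\chi_\Omega\Beurling^N(\chi_\Omega\,\cdot)$ plus a finite sum of terms, each containing at least one factor that is either a commutator $[\Beurling,\mu^j]$ or an insertion of $\chi_{\C\setminus\Omega}$ between two copies of $\Beurling$. Each such factor is smoothing of positive order on $W^{n,p}(\Omega)$ — the commutators because $\mu^j\in W^{n,p}(\Omega)\subset C^{n-1,1-2/p}$, the boundary insertions because the relevant domains of integration are separated — so, by the compactness of $W^{n+\delta,p}(\Omega)\hookrightarrow W^{n,p}(\Omega)$ on the bounded Lipschitz domain $\Omega$, every such term is compact. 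Thus $(\mu\Beurling_\Omega)^N=\mu^N\Beurling^N+K_N$ with $K_N$ compact; choosing $N$ so large that $\norm{\mu^N\Beurling^N}_{W^{n,p}(\Omega)}<1$, the operator $I_\Omega-(\mu\Beurling_\Omega)^N$ is a compact perturbation of an invertible operator, hence Fredholm of index $0$. Since it factors as $(I_\Omega-\mu\Beurling_\Omega)\sum_{j=0}^{N-1}(\mu\Beurling_\Omega)^j$ with commuting factors, $I_\Omega-\mu\Beurling_\Omega$ is itself Fredholm, and the homotopy $t\mapsto I_\Omega-t\mu\Beurling_\Omega$ — admissible since $\norm{t\mu}_{L^\infty}<1$ throughout — shows its index is $0$. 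Finally $I_\Omega-\mu\Beurling_\Omega$ is injective on $W^{n,p}(\Omega)$ because it is injective on the larger space $L^2(\Omega)$; an injective Fredholm operator of index $0$ is invertible, with bounded inverse.

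I expect the main obstacle to be twofold. The essential input is the iterate bound $\norm{\Beurling^m}_{W^{n,p}(\Omega)}\lesssim(1+\varepsilon)^m$ with base arbitrarily close to $1$ — precisely the strengthening that the present paper provides over Theorems~\ref{theoGeometricVeryNaive} and~\ref{theoGeometricNaive}; without the near-optimal base one could not absorb it into the geometric decay of $\norm{\mu^m}_{W^{n,p}}$. The remaining difficulty is technical: isolating the remainder terms in the expansion of $(\mu\Beurling_\Omega)^N$ and checking that each genuinely gains regularity and is therefore compact, which is most delicate in the borderline case $n=1$, where the top-order derivatives of $\mu$ lie only in $L^p$ and one must exploit the commutator structure rather than any Hölder bound on $\mu$ itself.
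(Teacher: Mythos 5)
Your proposal follows the same route the paper itself sketches (and attributes to \cite{PratsQuasiconformal}): combine the $(1+\epsilon)^m$ iterate bound of Theorem \ref{theoGeometricPGtr2} with the geometric decay of $\norm{\mu^m}_{W^{n,p}(\Omega)}$ to conclude $\norm{\mu^m\Beurling^m}_{W^{n,p}(\Omega)\to W^{n,p}(\Omega)}\to 0$, then run the Fredholm argument and finish by applying Theorem \ref{theoGeometricNaive} to $\partial f=1+\Beurling(\chi_\Omega h)$. The one substantive step you assert rather than prove --- that the commutator remainders in the expansion of $(\mu\Beurling_\Omega)^N$ are compact on $W^{n,p}(\Omega)$ (note the $\chi_{\C\setminus\Omega}$ insertions you worry about actually vanish, since $\supp\mu\subset\overline\Omega$ gives $(\mu\Beurling_\Omega)^N=\chi_\Omega(\mu\Beurling)^N(\chi_\Omega\,\cdot)$) --- is precisely the technical content supplied by the companion paper, so your approach is correct and matches the intended proof.
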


For results connecting the Sobolev regularity $W^{s,p}(\C)$ of a quasiconformal mapping and its Beltrami coefficient we refer the reader to \cite{Astala}, \cite{AstalaIwaniecSaksman}, \cite{ClopFaracoMateuOrobitgZhong}, \cite{ClopFaracoRuiz} and \cite{CruzMateuOrobitg} and, when Sobolev spaces on domains are concerned, to \cite{MateuOrobitgVerdera}, \cite{CittiFerrari} and \cite{CruzMateuOrobitg} again.

The plan of the paper is the following. In Section \ref{secPreliminaries} some preliminary assumptions are stated. Subsection \ref{secNotation} explains the notation to be used and recalls some well-known facts. In Subsection \ref{secBetas} one finds the definition of some generalized $\beta$-coefficients related to Jones and David-Semmes' celebrated betas. In Subsection \ref{secSpaces} the definition of the Besov spaces $B^s_{p,p}$ is given along with some related well-known facts and an equivalent norm in terms of the generalized $\beta$-coefficients using a result by Dorronsoro in \cite{Dorronsoro}. Subsection \ref{secOperators} is about some operators related to the Beurling transform, providing a standard notation for the whole article.

Section \ref{secCharacteristic} is devoted to prove Theorems \ref{theoGeometricVeryNaive} and  \ref{theoGeometricNaive}. The first step is to study the case of unbounded domains whose boundary can be expressed as the graph of a Lipschitz function. Subsection \ref{secCharacteristicSpecial} contains the outline of the proof, reducing it to two lemmas. The first one studies the relation with the $\beta$-coefficients and is proven in Subsection \ref{secInterstitial}. The second one, proven in Subsection \ref{secPoly}, is about the case where the domain is bounded by the graph of a polynomial, and here one finds  the exponential behavior of the bounds for the iterates of the Beurling transform, which entangles the more subtle details of the proof. Finally, in Subsections \ref{secLocalization} and \ref{secPGtr2} one finds a more quantitative version of Theorem \ref{theoGeometricVeryNaive} and Theorem \ref{theoGeometricNaive} for bounded Lipschitz domains using a localization principle and the aforementioned $T(P)$-theorem.

\section{Preliminaries}\label{secPreliminaries}
\subsection{Some notation and well-known facts}\label{secNotation}
{\bf On inequalities:} 
When comparing two quantities $x_1$ and $x_2$ that depend on some parameters $p_1,\dots, p_j$ we will write 
$$x_1\leq C_{p_{i_1},\dots, p_{i_j}} x_2$$
if the constant $C_{p_{i_1},\dots, p_{i_j}} $ depends on ${p_{i_1},\dots, p_{i_j}}$. We will also write $x_1\lesssim_{p_{i_1},\dots, p_{i_j}} x_2$ for short, or simply $x_1\lesssim  x_2$ if the dependence is clear from the context or if the constants are universal. We may omit some of these variables for the sake of simplicity. The notation $x_1 \approx_{p_{i_1},\dots, p_{i_j}} x_2$ will mean that $x_1 \lesssim_{p_{i_1},\dots, p_{i_j}} x_2$ and $x_2 \lesssim_{p_{i_1},\dots, p_{i_j}} x_1$.

{\bf On polynomials:}
We write $\mathcal{P}^n$ for the vector space of polynomials of degree smaller or equal than $n$ with one variable. 

{\bf On sets:}
Given two sets $A$ and $B$, their {\rm symmetric difference} is $A\Delta B:=(A\cup B) \setminus (A \cap B)$. Given $z\in \C$ and $r>0$, we write $B(z,r)$ or $B_r(z)$ for the open ball centered at $z$ with radius $r$ and $Q(z,r)$ for the open cube centered at $z$ with sides parallel to the axis and side-length $2r$. Given any cube $Q$, we write $\ell(Q)$ for its side-length, and $rQ$ will stand for the cube with the same center but enlarged by a factor $r$. We will use the same notation for balls and one dimensional cubes, that is, intervals. For instance, $I(x,r)=(x-r,x+r)$ for $x\in\R$ and $r>0$. 

At some point we need to use intervals in $\C$: given $z,w\in \C$, we call the interval with endpoints $z$ and $w$
$$[z,w]:=\{(1-t)z + tw: t\in[0,1]\}.$$
We may use the ``open'' interval $]z,w[:=[z,w]\setminus\{z,w\}$.

Let $n\in\N$. We say that  a function $f:\R\to \C$ belongs to the Lipschitz class $C^{n-1,1}$ if it has $n-1$ continuous derivatives and
$$\norm{f}_{C^{n-1,1}(\R)}=\sum_{i=1}^{n-1}\norm{f^{(i)}}_{L^\infty(\R)}+ \sup_{\substack{z,w\in\R \\z\neq w}}\frac{|f^{(n-1)}(z)-f^{(n-1)}(w)|}{|z-w|}.$$

 We call domain an open and connected subset of $\C$.
\begin{definition}\label{defLipschitz}
Given $n\geq 1$, we say that $\Omega\subset\C$ is a $(\delta,R)-C^{n-1,1}$ domain if given any $z\in\partial\Omega$, there exists a function $A_z\in C^{n-1,1}(\R)$ supported in $[-4R,4R]$ such that 
\begin{equation*}
\norm{A_z^{(j)}}_{L^\infty}\leq \frac{\delta}{R^{j-1}} \mbox{\,\,\,\, for every } 0\leq j \leq n,
\end{equation*}
and, possibly after a rigid movement $\tau$ composed by a translation that sends $z$ to the origin and a rotation that brings the tangent at $z$ to the real line, we have that
$$\tau(\Omega)\cap Q(0,R)= \{x+i\,y: y>A_z(x)\},$$
and so that, given $|x|\leq R$, the point in the graph $(x,A(x))$ belongs to $\partial\Omega$ after the corresponding rotation and translation.
In case $n=1$ the assumption of the tangent is removed (we say that $\Omega$ is a $(\delta,R)$-Lipschitz domain). 

We call {\em window} the preimage $\mathcal{Q}=\tau^{-1}(Q(0,R))$ by that rigid movement.
\end{definition}

{\bf On measure theory:}
We denote the $1$-dimensional Lebesgue measure in $\R$ by $m_1$ (or $m$ if it is clear from the context).  We will write $dz$ for the form $dx+i\,dy$ and analogously $d\bar{z}=dx-i\,dy$, where $z=x+i\,y$. Thus, when integrating a function with respect to  the Lebesgue measure of a complex variable $z$ we will always use $dm(z)$ to avoid confusion, or simply $dm$. Note that, at some point, we use $m$ also to denote a natural number.

{\bf On indices:}
In this text  $\N_0$ stands for the natural numbers including $0$. Otherwise we will write $\N$.  We will make wide use of the multiindex notation for exponents and derivatives. For $\alpha\in\Z^2$  its modulus is $|\alpha|=\sum_{i=1}^2|\alpha_i|$ and its factorial is $\alpha!=\alpha_1!\alpha_2!$. Given two multiindices $\alpha, \gamma\in \Z^2$ we write $\alpha\leq \gamma$ if $\alpha_i\leq \gamma_i$ for every $i$. We say $\alpha<\gamma$ if, in addition, $\alpha\neq\gamma$. Furthermore, we write
$${\alpha \choose \gamma}:=\prod_{i=1}^2 {\alpha_i \choose \gamma_i}=\begin{cases}
\prod_{i=1}^2 \frac{\alpha_i!}{\gamma_i!(\alpha_i-\gamma_i)!}  & \mbox{if }\alpha \in \N_0^2 \mbox{ and }\vec{0}\leq \gamma \leq\alpha ,\\
0 & \mbox{otherwise.}
\end{cases}$$

At some point we will use also roman letter for multiindices, and then, to avoid confusion, we will use the vector notation $\vec{i},\vec{j}, \dots$

{\bf On complex notation}
For $z=x+i\,y \in \C$ we write $\real(z):=x$ and $\imag(z):=y$. Note that the symbol $i$ will be used also widely as a index for summations without risk of confusion. The multiindex notation will change slightly: for $z\in \C$ and $\alpha\in \Z^2$ we write $z^\alpha:=z^{\alpha_1}\bar{z}^{\alpha_2}$. 

We also adopt the traditional Wirtinger notation for derivatives, that is, given any $\phi\in C^\infty_c(\C)$, then 
$$\partial \phi (z):=\frac{\partial \phi}{\partial z}(z)=\frac12(\partial_x\phi-i\,\partial_y\phi) (z),$$
 and 
 $$\bar \partial \phi (z):=\frac{\partial\phi}{\partial \bar z}(z)=\frac12(\partial_x\phi+i\,\partial_y\phi) (z).$$
Thus, given any $\phi\in C^\infty_c(\C)$  (infintitely many times differentiable with compact support in $\C$) and $\alpha\in\N_0^2$, we write $D^\alpha\phi=\partial^{\alpha_1}\bar\partial^{\alpha_2}\phi$.

{\bf On Sobolev spaces:}
For any open set $U\subset \C$, every distribution $f\in \mathcal{D}'(U)$ and $\alpha\in\N_0^2$, the {\em distributional derivative} $D^\alpha_U f$ is the distribution defined by
$$\langle D^\alpha_U f,\phi\rangle:=(-1)^{|\alpha|}\langle f, D^\alpha \phi\rangle \mbox{\,\,\,\, for every }\phi \in C^\infty_c(U).$$
Abusing notation we will write $D^\alpha$ instead of $D^\alpha_U$ if it is clear from the context. If the distribution is regular, that is, if it coincides with an $L^1_{loc}$ function acting on $\mathcal{D}(U)$, then we say that $D^\alpha_U f$ is a {\em weak derivative} of $f$ in $U$. We write $|\nabla^n f|=\sum_{|\alpha|=n}|D^\alpha f|$.

Given numbers $n\in\N$, $1\leq p\leq\infty$ an open set $U\subset\C$ and an $L^1_{loc}(U)$ function $f$, we say that $f$ is in the Sobolev space $W^{n,p}(U)$ of smoothness $n$ and order of integrability $p$ if $f$ has weak derivatives $D^\alpha_U f\in L^p$ for every $\alpha\in\N_0^2$ with $|\alpha|\leq n$. When $\Omega$ is a Lipschitz domain, we will use the norm
$$\norm{f}_{W^{n,p}(\Omega)}=\norm{f}_{L^p(\Omega)}+\norm{\nabla^n f}_{L^p(\Omega)},$$
which is equivalent to considering also the fewer order derivatives, that is, 
\begin{equation}\label{eqEquivalenceNormsSobolev}
\norm{f}_{W^{n,p}(\Omega)}\approx \norm{f}_{L^p(\Omega)}+\sum_{|\alpha|\leq n}\norm{D^\alpha f}_{L^p(\Omega)}
\end{equation}
(see \cite[Theorem 4.2.4]{TriebelInterpolation}) or, if $\Omega$ is an extension domain,
$$\norm{f}_{W^{n,p}(\Omega)}\approx \inf_{F: F|_\Omega\equiv f}\norm{F}_{W^{n,p}(\C)}.$$
From \cite{Jones}, we know that uniform domains (and in particular, Lipschitz domains) are Sobolev extension domains for any indices $n\in \N$ and $1\leq p \leq\infty$. One can find deeper results in that sense in \cite{Shvartsman} and \cite{KoskelaRajalaZhang}.

The reader can consider $n\in\N$ and $1<p<\infty$ to be two given numbers along the whole text. At some point the restriction $2<p$ will be needed.

{\bf On finite diferences:}
Given  a function $f:\Omega\subset \C \to \C$ and two values $z, h\in \C$ such that $[z, z+h]\subset \Omega$, we call 
$$\Delta_h^1 f (z)=\Delta_h f (z)=f(z+h)-f(z).$$
Moreover, for any natural number $i\geq2$ we define the iterated difference
$$\Delta_h^i f (z)=\Delta_h^{i-1}f(z+h)-\Delta_h^{i-1}f(z)=\sum_{j=0}^i (-1)^{i-j} {i \choose j} f(z+j h)$$
whenever the segment $[z,z+ih]\subset \Omega$.
 
{\bf On Whitney coverings:}
Given a domain $\Omega$, we say that a collection of open dyadic cubes $\mathcal{W}$ is a {\rm Whitney covering} of $\Omega$ if they are disjoint, the union of the cubes and their boundaries is $\Omega$, there exists a constant $C_{\mathcal{W}}$ such that 
$$C_\mathcal{W} \ell(Q)\leq \dist(Q, \partial\Omega)\leq 4C_\mathcal{W}\ell(Q),$$
two neighbor cubes $Q$ and $R$ (i.e., $\overline Q\cap \overline R\neq\emptyset$) satisfy $\ell(Q)\leq 2 \ell(R)$, and the family $\{20 Q\}_{Q\in\mathcal{W}}$ has finite superposition. The existence of such a covering is granted for any open set different from $\C$ and in particular for any domain as long as $C_\mathcal{W}$ is big enough (see \cite[Chapter 1]{SteinPetit} for instance).

{\bf On the Leibniz rule:}
The Leibniz formula (see \cite[Section 5.2.3]{Evans}) says that given a domain $\Omega\subset \C$, a function $f\in W^{n,p}(\Omega)$, a multiindex $\alpha\in \N_0^2$ with $|\alpha| \leq n$ and $\phi\in C^\infty_c(\Omega)$, we have that $\phi\cdot f\in  W^{n,p}(\Omega)$ with 
\begin{equation*}
D^\alpha (\phi\cdot f)=\sum_{\gamma\leq \alpha}{\alpha\choose\gamma} D^\gamma\phi D^{\alpha-\gamma} f.
\end{equation*}

{\bf On Green's formula:}
Green's Theorem can be written in terms of complex derivatives (see \cite[Theorem 2.9.1]{AstalaIwaniecMartin}). Let $\Omega$ be a bounded Lipschitz domain. If $f, g\in W^{1,1}(\Omega)\cap C(\overline{\Omega})$, then 
\begin{equation}\label{eqGreen}
\int_\Omega \left(\partial f + \bar \partial g\right) \,dm=\frac{i}2\left(\int_{\partial\Omega} f(z) \, d\bar{z}- \int_{\partial\Omega}g(z)\, dz\right).
\end{equation}

{\bf On Rolle's Theorem:}
We state here also a Complex Rolle Theorem for holomorphic functions \cite[Theorem 2.1]{EvardJafari} that will be a cornerstone of Section \ref{secPoly}.
\begin{theorem}\label{theoRolle}[see \cite{EvardJafari}]
Let $f$ be a holomorphic function defined on an open convex set $U\subset\C$. Let $a,b\in U$ such that $f(a)=f(b)=0$ and $a\neq b$. Then there exists $z$ in the segment  $]a,b[$ such that $\real(\partial f(z))=0$. 
\end{theorem}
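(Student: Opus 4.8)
The plan is to reduce the statement to the classical one-variable Rolle theorem by restricting $f$ to the segment $[a,b]$ and correcting for the direction of that segment.

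First I would set $\gamma(t):=a+t(b-a)$ for $t$ in a neighborhood of $[0,1]$ inside $\C$. Since $U$ is open and convex it contains the segment $[a,b]$, so $\gamma$ maps such a neighborhood into $U$ and $g:=f\circ\gamma$ is holomorphic there, with $g(0)=g(1)=0$. One is tempted to apply Rolle directly to $\real g$ on $[0,1]$, but that only produces a point $z_0\in\,]a,b[$ with $\real(f'(z_0)(b-a))=0$, which is not the same as $\real(f'(z_0))=0$. The one genuine idea — and essentially the only step that is not bookkeeping — is to remove the rotation by the conjugate direction first: put $G(t):=\overline{(b-a)}\,g(t)$. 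Then $G$ is holomorphic on a neighborhood of $[0,1]$, $G(0)=G(1)=0$, and writing $G=G_1+iG_2$ with $G_1=\real G$ real-valued on $\R$, the function $G_1$ is $C^1$ on $[0,1]$, vanishes at both endpoints, and satisfies $G_1'(t)=\real(G'(t))$ for real $t$, because the complex derivative of a holomorphic function agrees on $\R$ with the ordinary derivative of the restriction.

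Now I would invoke the classical Rolle theorem for $G_1$ on $[0,1]$: there is $t_0\in(0,1)$ with $G_1'(t_0)=0$, that is, $\real(G'(t_0))=0$. Unwinding the chain rule, $G'(t_0)=\overline{(b-a)}\,g'(t_0)=\overline{(b-a)}(b-a)\,f'(\gamma(t_0))=|b-a|^2\,f'(z_0)$ with $z_0:=\gamma(t_0)\in\,]a,b[$; since $|b-a|^2>0$ this gives $\real(f'(z_0))=0$, and as $\partial f=f'$ for holomorphic $f$, the theorem follows.

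There is no deep obstacle here: once the reduction is set up this way, the classical Rolle theorem does all the work. The single point one must not overlook is that Rolle has to be applied to $\real(\overline{(b-a)}f)$ along the segment and not to $\real f$, because the naive choice extracts the wrong real-linear functional of $f'(z_0)$ — which is precisely why the conclusion concerns $\real(\partial f)$ rather than $\partial f$ itself (the naive complex Rolle asking for $\partial f(z)=0$ is false, as $f(z)=e^{2\pi i z}-1$ on $[0,1]$ shows).
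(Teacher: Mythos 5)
Your proof is correct: the paper states this result without proof, citing Evard--Jafari, and your argument --- applying the classical Rolle theorem to $t\mapsto\real\bigl(\overline{(b-a)}\,f(a+t(b-a))\bigr)$ so that the chain rule produces the positive real factor $\overline{(b-a)}(b-a)=|b-a|^2$ in front of $f'(z_0)$ --- is precisely the standard argument from that reference. The one subtle point, namely that one must rotate by $\overline{(b-a)}$ before taking real parts rather than applying Rolle to $\real(f\circ\gamma)$ directly, is exactly the point you identify and handle correctly.
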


{\bf On the Sobolev Embedding Theorem:}
We state a reduced version of the Sobolev Embedding Theorem for Lipschitz domains (see \cite[Theorem 4.12, Part II]{AdamsFournier}). 
For each Lipschitz domain $\Omega\subset \C$ and every $p>2$, there is a continuous embedding of the Sobolev space $W^{1,p}(\Omega)$ into the H\"older space $C^{0,1-\frac{2}{p}}(\overline\Omega)$. That is, writing
$$\norm{f}_{C^{0,s}(\overline\Omega)}:=\norm{f}_{L^\infty(\Omega)}+\sup_{\substack{z,w\in\overline\Omega\\z\neq w}}\frac{|f(z)-f(w)|}{|z-w|^s}\mbox{\,\,\,\, for $0<s\leq 1$},$$
we have that for every $f\in W^{1,p}(\Omega)$, 
\begin{equation*}
\norm{f}_{C^{0,1-\frac 2p}(\overline\Omega)}\leq C_\Omega \norm{f}_{W^{1,p}(\Omega)}.
\end{equation*}

{\bf On inequalities:}
We will use Young's Inequality. It states that for measurable functions $f$ and $g$, we have that
\begin{equation}\label{eqYoung}
\norm{f * g}_{L^q}\leq\norm{f}_{L^r}\norm{g}_{L^p}
\end{equation}
for $1\leq p,q,r\leq \infty$ with $\frac{1}{q}=\frac{1}{p}+\frac{1}{r}-1$ (see \cite[Appendix A2]{SteinPetit}).

\subsection{Some generalized betas}\label{secBetas}
In \cite{Dorronsoro}, Dorronsoro introduces a characterization of Besov spaces in terms of the mean oscillation of the functions on cubes, and he uses approximating polynomials to do so. If the polynomials are of degree one, that is straight lines, this definition can be written in terms of a certain sum of David-Semmes betas (see \cite{CruzTolsa} for instance). Following the ideas of Dorronsoro in our case we will use higher degree polynomials to approximate the Besov function that we want to consider, giving rise to some generalized betas. The following proposition comes from \cite{Dorronsoro}, where it is not explicitly proven. We give a short proof of it for the sake of completeness.

\begin{proposition}\label{propoOrthoApproxPoly}
Given a locally integrable function $f: \R \to \R$ and an interval $I\subset \R$, there exists a unique polynomial $\mathbf{R}^n_I f \in \mathcal{P}^n$ which we will call \emph{approximating polynomial} of $f$ on $I$, such that given any $j\leq n$ one has that
\begin{equation}\label{eqOrthoApproxPoly}
\int_I (\mathbf{R}^n_I f- f) \, x^j=0.
\end{equation}
\end{proposition}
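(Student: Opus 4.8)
The plan is to recognize \eqref{eqOrthoApproxPoly} as a linear algebra statement: the map that sends a polynomial $R \in \mathcal{P}^n$ to the vector $\left( \int_I R\, x^j \right)_{j=0}^n \in \R^{n+1}$ is a linear map between two $(n+1)$-dimensional spaces, and we want to show that for the prescribed target vector $\left( \int_I f\, x^j \right)_{j=0}^n$ there is a unique preimage. By the rank-nullity theorem it suffices to prove either injectivity or surjectivity; I would prove injectivity, which is equivalent to the statement that the bilinear form $(P,R) \mapsto \int_I P R$ on $\mathcal{P}^n$ is non-degenerate. This is immediate: if $\int_I P\, x^j = 0$ for all $j \le n$ then $\int_I P^2 = 0$ (expanding $P$ in the monomial basis and using linearity), hence $P \equiv 0$ on $I$ since $P$ is continuous and $I$ has positive measure, hence $P = 0$ as a polynomial of degree $\le n$.

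First I would set up the notation: fix $I = I(x_0, r)$ and write the unknown polynomial as $R(x) = \sum_{k=0}^n a_k x^k$. The conditions \eqref{eqOrthoApproxPoly} become the linear system $\sum_{k=0}^n a_k \int_I x^{k+j}\, dm(x) = \int_I f(x)\, x^j\, dm(x)$ for $j = 0, \dots, n$, with coefficient matrix $M = \left( \int_I x^{k+j}\, dm \right)_{j,k=0}^n$, a generalized Hankel/moment matrix. Then I would observe that $M$ is exactly the Gram matrix of the monomials $1, x, \dots, x^n$ with respect to the inner product $\langle g, h \rangle_I := \int_I g h\, dm$ on $L^2(I)$, and that the monomials are linearly independent in $L^2(I)$ (a nonzero polynomial of degree $\le n$ cannot vanish a.e.\ on a set of positive measure). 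A Gram matrix of linearly independent vectors in an inner product space is positive definite, hence invertible, which gives existence and uniqueness of the solution $(a_0, \dots, a_n)$, and therefore of $\mathbf{R}^n_I f$.

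There is essentially no obstacle here — the statement is a clean finite-dimensional fact — so the only thing to be careful about is making the logical skeleton explicit rather than hand-waving "Gram matrices are invertible''. The one substantive input is the fact that a polynomial of degree at most $n$ that integrates to zero against every monomial $x^j$, $j \le n$, on a positive-length interval must be identically zero; I would phrase this via $\int_I P^2\, dm = \sum_j c_j \int_I P\, x^j\, dm = 0$ where $P = \sum_j c_j x^j$, giving $P \equiv 0$ on $I$ by continuity and thus $P = 0$. If one wanted to avoid matrices altogether, an equivalent and perhaps cleaner route is: the restriction map $\mathcal{P}^n \to L^2(I)$ is injective, so $\mathcal{P}^n$ sits inside $L^2(I)$ as a finite-dimensional (hence closed) subspace, and $\mathbf{R}^n_I f$ is simply the orthogonal projection of $f$ onto this subspace — which exists and is unique by the Hilbert space projection theorem, and \eqref{eqOrthoApproxPoly} is precisely the statement that $f - \mathbf{R}^n_I f \perp x^j$ for all $j \le n$. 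I would likely present this projection argument as the main proof since it is the shortest and makes the uniqueness transparent.
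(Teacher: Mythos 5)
Your Gram-matrix argument is correct and complete, and it takes a genuinely different route from the paper. The paper proves the result in two stages: for $f\in L^2(I)$ it invokes the Hilbert Projection Theorem to decompose $L^2(I)=\mathcal{P}^n\oplus(\mathcal{P}^n)^\perp$, and then for general $f\in L^1(I)$ it approximates by a sequence $f_j\in L^2(I)$ with $|f_j|\leq|f|$, uses the a priori bound $\sup_I|\mathbf{R}^n_I f_j|\lesssim \frac1{|I|}\int_I|f|$ to extract a convergent subsequence of the polynomials, and passes to the limit by dominated convergence; uniqueness is then the same bilinear-form argument you give. Your linear-system formulation short-circuits all of this: since $I$ is bounded, the right-hand side $\bigl(\int_I f\,x^j\bigr)_{j=0}^n$ is finite for any locally integrable $f$, the coefficient matrix is the Gram matrix of the monomials in $L^2(I)$ and hence invertible, and existence and uniqueness drop out simultaneously for the full class of $f$ covered by the statement. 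That is arguably cleaner than the paper's two-step proof, at the cost of writing down the moment matrix explicitly.

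One caution about your closing preference. The "shortest" variant you say you would actually present — realizing $\mathbf{R}^n_I f$ as the orthogonal projection of $f$ onto $\mathcal{P}^n\subset L^2(I)$ — only makes sense when $f|_I\in L^2(I)$, whereas the proposition assumes only local integrability. If you lead with the projection argument you must add the approximation step for $f\in L^1(I)\setminus L^2(I)$, which is precisely the second paragraph of the paper's proof. So either present the Gram-matrix argument as the main proof (no gap), or keep the projection argument but supplement it as the paper does.
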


\begin{remark}\label{remApproxPoly}
In case of existence, the approximating polynomial verifies
$$\sup_{x\in I} |\mathbf{R}^n_I f(x)|\leq C_{n} \frac{1}{|I|} \int_I |f| \, dm.$$
\end{remark}
\begin{proof}
Indeed, since $\mathcal{P}^n$ is a finite dimensional vectorial space, all the norms are equivalent. In particular one can easily see that for any $P\in \mathcal{P}^n$
$$\norm{P}_{L^\infty(I)}^2\approx \frac{1}{|I|}\norm{P}^2_{L^2(I)}.$$
Using the linearity of the integral in \rf{eqOrthoApproxPoly}, one has
$$\frac{1}{|I|}\int_I|\mathbf{R}^n_I f|^2 \, dm=\frac{1}{|I|}\int_I \mathbf{R}^n_I f \cdot f \, dm.$$
Combining both facts one gets
$$\norm{\mathbf{R}^n_I f}_{L^\infty(I)}^2\lesssim \frac1{|I|}\norm{\mathbf{R}^n_I f}_{L^\infty(I)}\norm{f}_{L^1(I)}.$$
\end{proof}

\begin{proof}[Proof of Proposition \ref{propoOrthoApproxPoly}]
By the Hilbert Projection Theorem, $L^2(I)=\mathcal{P}^n \oplus (\mathcal{P}^n)^{\perp}$. Thus, if $f\in L^2(I)$, we can write
$f|_I=\mathbf{R}^n_I f + (f|_I-\mathbf{R}^n_I f)$ satisfying \rf{eqOrthoApproxPoly}. 

For general $f\in L^1$, we can define a sequence of functions $\{f_j\}_{j\in \N}\subset L^2(I)$ such that $|f_j|\leq|f|$ and  $f_j\xrightarrow{a.e.} f$.
By Remark \ref{remApproxPoly} we have that the approximating polynomials $\mathbf{R}^n_I f_j$ are uniformly bounded in $I$ by 
$$\sup_{x\in I} |\mathbf{R}^n_I f_j(x)|\lesssim\frac{1}{|I|} \int_I |f_j| \, dm\leq\frac{1}{|I|} \int_I |f| \, dm.$$
Therefore there exists a convergent subsequence of $\{\mathbf{R}^n_I f_j\}_j$ in $L^1$ (and in any other norm). We call $\mathbf{R}^n_I f$ the limit of one such partial. By the Dominated Convergence Theorem we get \rf{eqOrthoApproxPoly}.

To see uniqueness, we observe that if we find two polynomials $P_1$ and $P_2$ satisfying \rf{eqOrthoApproxPoly}, then
$$\int_I (P_1-P_2)P\, dm=0$$
for any $P\in\mathcal{P}^n$. In particular, if we take $P=P_1-P_2$ we get that $\norm{P_1-P_2}_{L^2(I)}=0$.
\end{proof}

\begin{remark}
Given $P\in \mathcal{P}^n$, an interval $I$ and  $1\leq p\leq\infty$ we have that
\begin{equation}\label{eqApproxPolyOptimal}
\norm{f-\mathbf{R}^n_I f}_{L^p(I)} \leq C_{n} \norm{f-P}_{L^p(I)},
\end{equation}
and given any intervals $I\subset I'$, 
\begin{equation}\label{eqApproxPolyChain}
\norm{f-\mathbf{R}^n_I f}_{L^p(I)} \leq C_{n} \norm{f-\mathbf{R}^n_{I'} f}_{L^p(I')}.
\end{equation}
\end{remark}

\begin{proof}
By means of the Triangle Inequality and \rf{eqOrthoApproxPoly}, we have that for any $P\in\mathcal{P}^n$
\begin{align*}
\norm{f-\mathbf{R}^n_I f}_{L^p(I)} 
	& \leq \norm{f-P}_{L^p(I)}  +\norm{P-\mathbf{R}^n_I f}_{L^p(I)} =\norm{f-P}_{L^p(I)}+ \norm{\mathbf{R}^n_I(P- f)}_{L^p(I)}.
\end{align*}
Therefore, we use twice H\"older's Inequality and Remark \ref{remApproxPoly} to get
\begin{align*}
\norm{f-\mathbf{R}^n_I f}_{L^p(I)} 
	& \leq \norm{f-P}_{L^p(I)}+ |I|^{1/p}\norm{\mathbf{R}^n_I(P- f)}_{L^\infty(I)}\\
	& \lesssim_{n} \norm{f-P}_{L^p(I)}+ \frac{|I|^{1/p}}{|I|}\norm{P- f}_{L^1(I)} \leq 2 \norm{f-P}_{L^p(I)}.
\end{align*}

The  inequality \rf{eqApproxPolyChain} is just a consequence of \rf{eqApproxPolyOptimal} replacing $P$ by $\mathbf{R}^n_{I'} f$.
\end{proof}

\begin{remark}\label{remCut}
This proposition is still valid in any dimension mutatis mutandis. However, in the one dimensional case, if $f$ is continuous and $I$ is an interval one can easily see that $f-\mathbf{R}^n_I f$ has $n+1$ zeroes at least. Indeed, if it did not happen, one could find a polynomial $P\in \mathcal{P}^n$ with a simple zero at every point where $f-\mathbf{R}^n_I f$ changes its sign, and no more. Therefore, $(f-\mathbf{R}^n_I f)\cdot P$ would have constant sign and, thus,  the integral in \rf{eqOrthoApproxPoly} would not vanish (see Figure \ref{figpolinomidorronsoro}). 
\end{remark}
\begin{figure}[ht]
 \centering
  \begin{subfigure}[b]{0.4\textwidth}{\includegraphics[width=\textwidth]{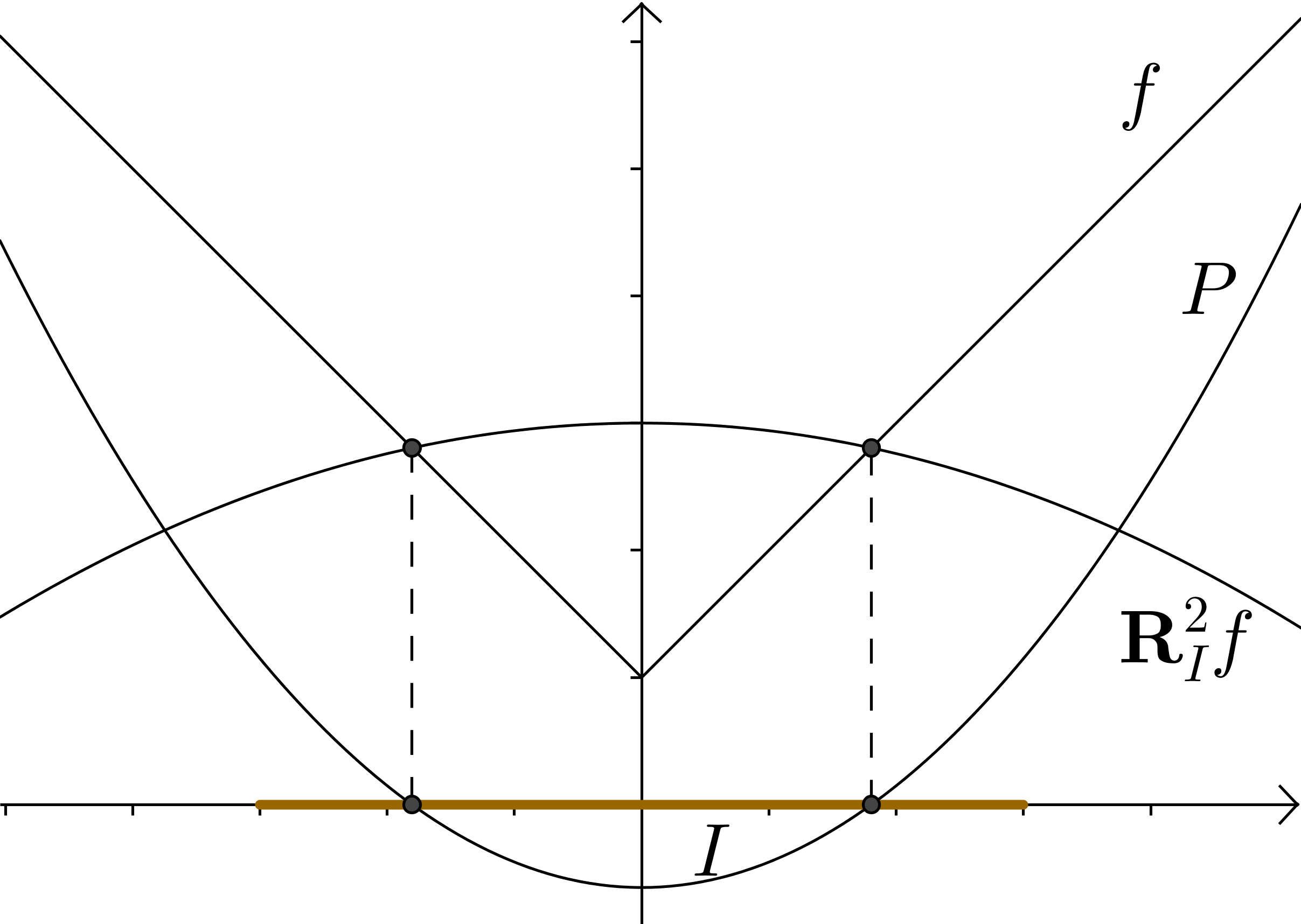}}\end{subfigure}
  \quad\quad\quad\quad \begin{subfigure}[b]{0.4\textwidth}{\includegraphics[width=\textwidth]{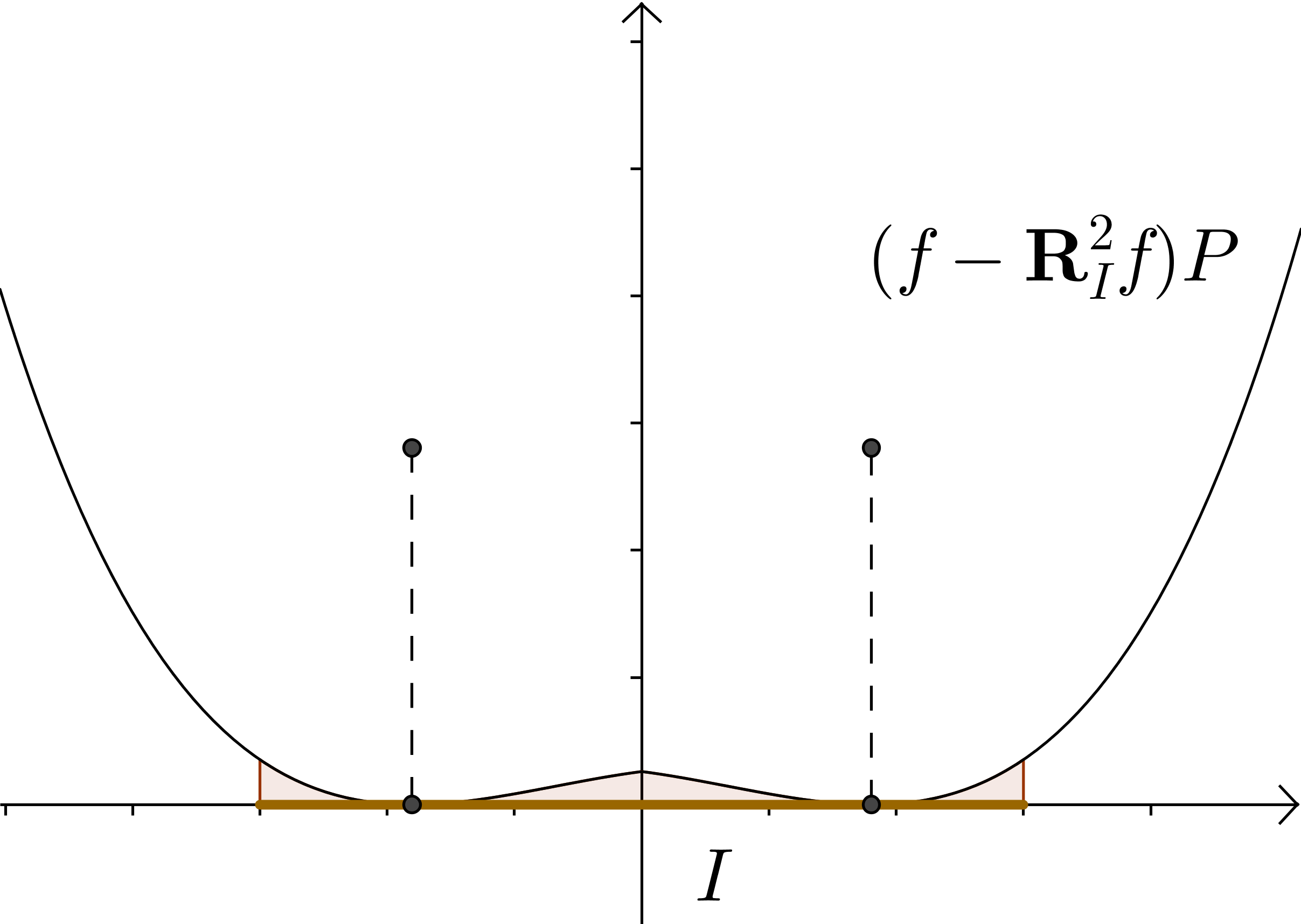}}\end{subfigure}    
  \caption{If $f-\mathbf{R}^2_I f$ had only $2$ zeroes, there would exist $P\in \mathcal{P}^2$ with $\int_I (f-\mathbf{R}^2_I f)P \, dm>0$.}\label{figpolinomidorronsoro}
\end{figure}

Now we can define the generalized betas.

\begin{definition}
Let $f: \R\to \R$ be a locally integrable function and $I\subset \R$ an interval. Then we define
$$\beta_{(n)}(f,I):=\frac{1}{|I|}\int_{3I}\frac{|f(x)-\mathbf{R}^n_{3I} f(x)|}{|I|} \, dm(x).$$
\end{definition}

\begin{remark}\label{remBetas}
Taking into account \rf{eqApproxPolyOptimal}, we can conclude that
$$\beta_{(n)}(f,I)\approx \inf_{P\in\mathcal{P}^n}\frac{1}{|I|}\int_{3I} \frac{|f(x) - P(x)|}{|I|} dm(x) .$$
This can be seen as a generalization of David and Semmes $\beta_1$ coefficient since $\beta_{(1)}$ and $\beta_{1}$ are comparable as long as some Lipschitz condition is assumed on $f$.
\end{remark}

\subsection{Function spaces}\label{secSpaces}
Next we recall some definitions and results on the function spaces that we will use. For a complete treatment we refer the reader to \cite{TriebelTheory} and \cite{RunstSickel}.

\begin{definition}\label{defCollection}Let $\Phi(\R)$ be the collection of all the families $\Psi=\{\psi_j\}_{j=0}^\infty\subset C^\infty_c(\R)$ such that
\begin{equation*}
\left\{ 
\begin{array}{ll}
\supp \,\psi_0 \subset (-2,2), & \\
\supp \,\psi_j \subset (-2^{j+1},2^{j+1})\setminus (-2^{j-1},2^{j-1}) & \mbox{ if $j\geq 1$},\\	
\end{array}
\right.
\end{equation*}
for all  $i\in\N_0$ there exists a constant $c_i$ such that
\begin{equation*}
\norm{ \psi_j^{(i)}}_\infty \leq \frac{c_i}{2^{j i} } \mbox{\,\,\, for every $j\geq 0$,}
\end{equation*}
and
\begin{equation*}
\sum_{j=0}^\infty \psi_j(x)=1 \mbox{\,\,\, for every $x\in\R$.}
\end{equation*}
\end{definition}
 
 \begin{definition}
Given any Schwartz function $\psi \in \mathcal{S}(\R)$ its Fourier transform is
$$F\psi(\zeta)=\int_{\R} e^{-2\pi i x  \zeta} \psi(x) dm(x).$$
This notion extends to the tempered distributions $\mathcal{S}(\R)'$ by duality.
 
Let $s \in \R$,  $1\leq p\leq \infty$, $1\leq q\leq\infty$ and $\Psi \in \Phi(\R)$. For any tempered distribution $f\in \mathcal{S}'(\R)$ we define the non-homogeneous Besov space
\begin{equation*}
\norm{f}_{B^s_{p,q}}^\Psi=\norm{\left\{2^{sj}F^{-1}\psi_j F f\right\}}_{l^q(L^p)}=\norm{\left\{2^{sj}\norm{F^{-1}\psi_j F f}_{L^p}\right\}}_{l^q},
\end{equation*}
and we call $B^s_{p,q}\subset \mathcal{S}'$ to the set of tempered distributions such that this norm is finite.
\end{definition}
These norms are equivalent for diferent choices of $\Psi$. In general one works with radial $\psi_j$ and such that $\psi_{j+1}(x)=\psi_j(x/2)$. Of course we will ommit $\Psi$ in our notation since it plays no role.

\begin{proposition}[{See \cite[Sections 2.3.3 and 2.7.1]{TriebelTheory}}]
The following properties hold:
\begin{enumerate}
\item Let $1\leq q_0, q_1\leq \infty$ and $1\leq p\leq  \infty$, $s\in\R$ and $\varepsilon>0$. Then 
$$B^{s+\varepsilon}_{p,q_0}\subset B^{s}_{p,q_1}.$$
\item Given $1\leq p_0\leq p_1\leq \infty$ and $-\infty < s_1\leq s_0<\infty$. Then
\begin{equation}\label{eqDifferentialDimension}
B^{s_0}_{p_0,p_0}\subset B^{s_1}_{p_1,p_1}\mbox{\,\,\,\, if }s_0-\frac{1}{p_0}=s_1-\frac{1}{p_1}.
\end{equation}
\end{enumerate}
\end{proposition}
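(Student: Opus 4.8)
The plan is to deduce both embeddings directly from the Littlewood--Paley characterization of $B^s_{p,q}$, using only two elementary ingredients: the trivial monotonicity $\ell^{q_0}\hookrightarrow\ell^{q_1}$ for $q_0\le q_1$ together with summability of geometric series (for part 1), and the Bernstein--Nikol'skii inequality for band-limited functions (for part 2). Throughout I would fix $\Psi=\{\psi_j\}_{j\ge0}\in\Phi(\R)$ and abbreviate $g_j:=F^{-1}\psi_j Ff$, so that $\norm{f}_{B^s_{p,q}}=\big\|\{2^{sj}\norm{g_j}_{L^p}\}_j\big\|_{\ell^q}$, noting that $Fg_j=\psi_j Ff$ is supported in $(-2,2)$ when $j=0$ and in the annulus $\{2^{j-1}\le|\zeta|\le 2^{j+1}\}$ when $j\ge1$.

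For part 1, write $a_j:=\norm{g_j}_{L^p}$. Since $\ell^{q_0}\hookrightarrow\ell^\infty$ one has $2^{(s+\varepsilon)j}a_j\le\norm{f}_{B^{s+\varepsilon}_{p,q_0}}$ for every $j$, hence $2^{sj}a_j\le 2^{-\varepsilon j}\norm{f}_{B^{s+\varepsilon}_{p,q_0}}$. The right-hand side is a geometric sequence, so it belongs to $\ell^{q_1}$ for every $q_1\in[1,\infty]$, with $\ell^{q_1}$-norm at most $C_{\varepsilon,q_1}\norm{f}_{B^{s+\varepsilon}_{p,q_0}}$; that is exactly the asserted inequality $\norm{f}_{B^s_{p,q_1}}\le C_{\varepsilon,q_1}\norm{f}_{B^{s+\varepsilon}_{p,q_0}}$. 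Equivalently, I would factor the embedding as $B^{s+\varepsilon}_{p,q_0}\hookrightarrow B^{s+\varepsilon}_{p,\infty}\hookrightarrow B^s_{p,1}\hookrightarrow B^s_{p,q_1}$, the two outer steps being $\ell^q$-monotonicity and the middle step the geometric-series estimate.

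For part 2 the key tool is the Bernstein--Nikol'skii inequality: if $h\in L^{p_0}(\R)$ has $Fh$ supported in $\{|\zeta|\le\rho\}$, then for $p_0\le p_1$ one has $\norm{h}_{L^{p_1}}\le C\,\rho^{1/p_0-1/p_1}\norm{h}_{L^{p_0}}$. I would prove this from Young's inequality \eqref{eqYoung}: pick $\eta\in C^\infty_c(\R)$ equal to $1$ on $\{\tfrac12\le|\zeta|\le2\}$ and supported in $\{\tfrac14\le|\zeta|\le4\}$, so that $g_j=g_j*F^{-1}\big(\eta(2^{-j}\cdot)\big)$ for $j\ge1$; since $F^{-1}\big(\eta(2^{-j}\cdot)\big)(x)=2^{j}(F^{-1}\eta)(2^{j}x)$, a change of variables gives $\norm{F^{-1}\big(\eta(2^{-j}\cdot)\big)}_{L^r}=2^{j(1/p_0-1/p_1)}\norm{F^{-1}\eta}_{L^r}$ for the exponent $r$ with $\tfrac1{p_1}=\tfrac1{p_0}+\tfrac1r-1$ (which lies in $[1,\infty]$ precisely because $p_0\le p_1$), and Young's inequality yields $\norm{g_j}_{L^{p_1}}\lesssim 2^{j(1/p_0-1/p_1)}\norm{g_j}_{L^{p_0}}$; the case $j=0$ is handled identically with a bump supported in $(-2,2)$. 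Multiplying by $2^{s_1 j}$ and using the hypothesis, which rearranges to $s_1+\tfrac1{p_0}-\tfrac1{p_1}=s_0$, gives $2^{s_1 j}\norm{g_j}_{L^{p_1}}\lesssim 2^{s_0 j}\norm{g_j}_{L^{p_0}}$ termwise; taking $\ell^{p_1}$ norms and invoking $\ell^{p_0}\hookrightarrow\ell^{p_1}$ (again because $p_0\le p_1$) produces $\norm{f}_{B^{s_1}_{p_1,p_1}}\lesssim\norm{f}_{B^{s_0}_{p_0,p_0}}$.

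I do not anticipate a real obstacle: the whole proposition reduces to Young's inequality together with the trivial scaling of $L^r$-norms of dilated bump functions, plus $\ell^{q_0}\hookrightarrow\ell^{q_1}$ for $q_0\le q_1$. The only points worth checking carefully are the bookkeeping of the frequency localizations — that $Fg_j$ genuinely lives in the dyadic annulus of size $2^j$, which is immediate from $\supp\psi_j$ — and, in part 2, that the exponential loss $2^{j(1/p_0-1/p_1)}$ coming from Bernstein is exactly absorbed by the gap $s_0-s_1=\tfrac1{p_0}-\tfrac1{p_1}$ in the smoothness indices.
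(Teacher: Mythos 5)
Your argument is correct: part 1 is the standard $\ell^{q}$-monotonicity combined with the geometric-series estimate coming from the gain $2^{-\varepsilon j}$, and part 2 is the Nikol'skii--Bernstein inequality obtained from Young's inequality \eqref{eqYoung} applied to the dilated bump, with the loss $2^{j(1/p_0-1/p_1)}$ exactly absorbed by the hypothesis $s_0-\frac{1}{p_0}=s_1-\frac{1}{p_1}$. The paper gives no proof of this proposition and simply cites Triebel, and your argument is precisely the standard one from that reference, so you have merely supplied the omitted details correctly (the one microscopic imprecision is the $j=0$ term: the auxiliary bump must equal $1$ on $\supp\psi_0$, so it is cleaner to take it supported in, say, $(-4,4)$ rather than in $(-2,2)$).
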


If we set $j\in \Z$ instead of $j\in \N$ in Definition \ref{defCollection}, then we get the homogeneous spaces of tempered distributions (modulo polynomials) $\dot B^s_{p,q}$. In particular, by \cite[Theorem 2.3.3]{TriebelTheoryII} we have that  if $s>0$ then
\begin{equation}\label{eqHomogeneous}
\norm{f}_{B^s_{p,q}}\approx \norm{f}_{\dot B^s_{p,q}}+\norm{f}_{L^p} \mbox{ \,\,\,\, for any }f\in \mathcal{S}'.
\end{equation} 

In the particular case of homogeneous Besov spaces with $1\leq p,q\leq \infty$ and $s>0$, one can give an equivalent definition in terms of differences of order $M\geq \left[s\right]+1$:
\begin{equation}\label{eqBesovDiff}
\norm{f}_{\dot B^s_{p,q}} \approx \left( \int_{\R} \frac{\norm{\Delta^M_h f}^q_{L^p}}{|h|^{sq}} \frac{dm(h)}{|h|}\right)^{\frac{1}{q}}.
\end{equation}

In \cite{CruzTolsa} the authors point out that the seminorm of the homogeneous Besov space $\dot B^s_{p,q}$ for $0<s<1$ can be defined in terms of the approximating polynomials of degree $1$ from the previous section. In general, \cite[Theorem 1]{Dorronsoro} together with \rf{eqApproxPolyChain} and Remark \ref{remBetas} can be used to prove without much effort that for any $s>0$ and $n\geq \left[s\right]$, 
$$\norm{f}_{\dot B^s_{p,q}}\approx \left( \int_0^\infty \left(\frac{\norm{\beta_{(n)}(f, I(\cdot, t))}_{L^p}}{t^{s-1}}\right)^q \frac{dt}{t}\right)^{1/q}.$$
In the particular case when $p=q$, which is in fact the one we are interested on, it is enough to consider dyadic intervals. Namely, writing $\mathcal{D}$ for the canonical dyadic grid, via Fubini's Theorem one can conclude that
\begin{equation}\label{eqNormBetas}
\norm{f}_{\dot B^s_{p,p}}^p\approx \sum_{I\in\mathcal{D}}\left(\frac{\beta_{(n)}(f,I)}{|I|^{s-1}}\right)^p |I|.
\end{equation}
When restricting to an open interval $I$, we call 
\begin{equation}\label{eqBesovRestricted}
\norm{f}_{\dot B^s_{p,p}(I)}^p:= \inf_{F: F|_I \equiv f}\norm{F}_{\dot B^s_{p,p}(I)}.
\end{equation}

Consider the boundary of a Lipschitz domain $\Omega \subset \C$. When it comes to the Besov space $B^s_{p,q}(\partial \Omega)$ we can just define it using the arc parameter of the curve, $z:I \to \partial\Omega$ with $|z'(t)|=1$ for all $t$. Note that if the domain is bounded, then $I$ is a finite interval with length equal to the length of the boundary of $\Omega$ and we need to extend $z$ periodically to $\R$ in order to have a sensible definition.
Then, if $1\leq p,q< \infty$,  we define naturally the homogeneous Besov norm on the boundary of $\Omega$ as
\begin{equation*}
\norm{f}_{B^s_{p,q}(\partial\Omega)} :=\norm{f\circ z}_{L^p(I)}+\norm{f\circ z}_{\dot B^s_{p,q}(2I)}.
\end{equation*}

Let $n\geq 1$, $\delta, R>0$ and  let $\Omega$ be a bounded $(\delta,R)-C^{n-1,1}$ domain. Consider $N:\partial\Omega\to \R^2$ to be the unitary outward normal vector of a Lipschitz domain. The following lemma gives a relation between the Besov norm of $N$ and the Betas of the parameterizations of the boundary of the domain. For this we will ask to have some controlled overlapping of the windows that we consider. 
 \begin{lemma}\label{lemNormBeta}
Let $n\geq 1$, $\delta, R>0$,  let $\Omega$ be a bounded $(\delta,R)-C^{n-1,1}$ domain and let $\{\mathcal{Q}_k\}_{k=1}^M$ be a collection of $R$-windows  such that $\left\{\frac{1}{20} \mathcal{Q}_k\right\}_{k}$ cover the boundary of $\Omega$ and $\left\{\frac{1}{40}  \mathcal{Q}_k\right\}_{k}$ are disjoint. Let $\{A_k\}_k$ be the parameterizations of the boundary associated to each window. Then, for any $1<p<\infty$
\begin{equation*}
\sum_{k=1}^M\sum_{I\in\mathcal{D}: I\subset \frac16 I_{R}}\frac{\beta_{(n)}( A_k,I)^p}{\ell(I)^{n\,p-2}} \lesssim \sum_{k=1}^M\norm{A_k}_{\dot B^{n+1-1/p}_{p,p}(\frac13 I_{R})}^p \lesssim  \norm{N}_{B^{n-1/p}_{p,p}(\partial\Omega)}^p,
\end{equation*}
where $I_R$ stands for the interval $(-R,R)$. The constants depend on $n$, $p$, $\delta$, $R$ and the length of the boundary $\mathcal{H}^1(\partial\Omega)$.
\end{lemma}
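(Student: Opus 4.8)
The plan is to handle the two inequalities separately, reducing the first to the Dorronsoro-type identity \rf{eqNormBetas} and the second to standard properties of the Besov scale (lifting under differentiation, composition with smooth maps, invariance under $C^{n-1,1}$ reparameterizations, and localization).

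For the first inequality, since $1<p<\infty$ the exponent $s:=n+1-\tfrac1p$ lies in $(n,n+1)$, so $[s]=n$ and \rf{eqNormBetas} applies with this $s$ and the betas of order $n$, giving $\norm{F}_{\dot B^{n+1-1/p}_{p,p}}^p\approx\sum_{I\in\mathcal{D}}\beta_{(n)}(F,I)^p\,\ell(I)^{2-np}$ for every tempered distribution $F$. Fixing $k$ and an extension $F$ of $A_k|_{\frac13 I_R}$, I would observe that every dyadic $I\subset\frac16 I_R$ satisfies $3I\subset\frac13 I_R$ (a dyadic interval contained in $(-R/6,R/6)$ has length at most $R/6$, so $3I$ reaches at most $R/6+\ell(I)\le R/3$); consequently $\beta_{(n)}(A_k,I)=\beta_{(n)}(F,I)$ for all such $I$, since each beta only sees the function restricted to $3I$. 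Summing over $I\subset\frac16 I_R$, taking the infimum over all extensions $F$, and summing in $k$ then yields the first estimate.

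For the second inequality, fix a window with rigid movement $\tau$ and graph function $A_k$, and let $\Gamma_k\subset\partial\Omega$ be the arc lying over $\frac13 I_R$ in the $\tau$-coordinates. There the outward unit normal at $(x,A_k(x))$ equals $(A_k'(x),-1)/\sqrt{1+A_k'(x)^2}$, whose second component is bounded below in modulus by $(1+\delta^2)^{-1/2}$; so, reading $N|_{\Gamma_k}$ in the $\tau$-frame and parameterizing it by $x$, one has $A_k'(x)=-N_1(x)/N_2(x)$ with $N_2$ bounded away from zero. I would then run the following chain of estimates, all involving restricted norms on $\frac13 I_R$ (resp.\ on $\Gamma_k$). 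Passing from the arc-length parameterization of $\partial\Omega$ to the graph parameterization over $\frac13 I_R$ is a bi-Lipschitz change of variables of $C^{n-1,1}$ regularity, hence preserves $B^{n-1/p}_{p,p}$ (note $n-\tfrac1p<n$). Since $n-\tfrac1p>0$, the space $B^{n-1/p}_{p,p}\cap L^\infty$ is a multiplication algebra and is stable under composition with functions that are smooth on the range of the argument; applying this to $1/N_2$ and then to the product with $N_1$ gives $\norm{A_k'}_{B^{n-1/p}_{p,p}(\frac13 I_R)}\lesssim\norm{N}_{B^{n-1/p}_{p,p}(\Gamma_k)}$. Finally, the lifting property of Besov spaces under $\tfrac{d}{dx}$, together with the invariance of homogeneous seminorms under adding a constant, gives $\norm{A_k}_{\dot B^{n+1-1/p}_{p,p}(\frac13 I_R)}\lesssim\norm{A_k'}_{\dot B^{n-1/p}_{p,p}(\frac13 I_R)}\le\norm{A_k'}_{B^{n-1/p}_{p,p}(\frac13 I_R)}$. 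For $n=1$ this recovers the relation used in \cite{CruzTolsa}.

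It remains to sum over $k$: since the inner windows $\tfrac1{40}\mathcal{Q}_k$ are pairwise disjoint and all windows have comparable size $R$, the arcs $\Gamma_k$ have overlap bounded in terms of the Lipschitz character of $\Omega$, so the localization property of $B^{n-1/p}_{p,p}(\partial\Omega)$ together with $\norm{N}_{L^p(\partial\Omega)}\le\mathcal{H}^1(\partial\Omega)^{1/p}$ gives $\sum_k\norm{N}^p_{B^{n-1/p}_{p,p}(\Gamma_k)}\lesssim\norm{N}^p_{B^{n-1/p}_{p,p}(\partial\Omega)}$, finishing the proof. I expect the main obstacle to be the second inequality when $n\ge 2$, where $n-\tfrac1p>1$ and the elementary difference characterization of $B^s_{p,p}$ no longer suffices: one has to invoke the Nemytskij-operator and diffeomorphism-invariance results for Besov spaces of smoothness larger than one (as in \cite{RunstSickel}) and keep careful track of local versus global norms throughout, whereas the finite-overlap bookkeeping is routine once the geometry of the windows is exploited.
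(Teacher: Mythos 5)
Your outline is correct, and the first inequality is treated exactly as in the paper (which declares it immediate from \rf{eqNormBetas}; your observation that $3I\subset\frac13 I_R$ for every dyadic $I\subset\frac16 I_R$ is precisely what makes the restricted seminorm \rf{eqBesovRestricted} usable there). For the second inequality, however, you take a genuinely different and softer route. The paper never invokes Nemytskij-operator or diffeomorphism-invariance theorems at smoothness $n-1/p>1$: it first uses the lifting property to reduce everything to \emph{first} differences of the $(n-1)$-th derivatives, i.e.\ to smoothness $1-1/p\in(0,1)$, and then works pointwise. Concretely, writing $g_k=(1+A_k'^2)^{-1/2}$ it expands $N_{k,1}^{(n-1)}=\sum_{i}{n-1\choose i}A_k^{(i+1)}g_k^{(n-1-i)}$, solves for $A_k^{(n)}$, and proves $|\Delta_h A_k^{(n)}(x)|\lesssim|\Delta_h N_k^{(n-1)}(x)|+|h|R^{-n}$ using only $L^\infty$ and Lipschitz bounds on the lower-order factors; an analogous Fa\`a di Bruno-type expansion handles the passage to the arc-length parameter. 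Your approach buys brevity at the cost of citing nontrivial machinery, and two points then need care. First, the reparameterization $\tau_k$ is only $C^{n-1,1}=W^{n,\infty}$, not $C^n$, so you need the $W^{n,\infty}$ version of diffeomorphism invariance of $B^s_{p,p}$ for $s<n$ (true, but its proof essentially amounts to the reduction to first differences of $(n-1)$-th derivatives that the paper carries out by hand; the off-the-shelf statements usually assume $C^k$ regularity with $k>s$). Second, the product and composition estimates are not linear, so what you actually obtain is $\norm{A_k'}_{B^{n-1/p}_{p,p}}\lesssim_{\delta,R}1+\norm{N}_{B^{n-1/p}_{p,p}(\Gamma_k)}$; the additive constants are harmless because $M\approx\mathcal{H}^1(\partial\Omega)/R$ is controlled and $\norm{N}_{L^p(\partial\Omega)}$ is bounded below (the paper absorbs identical error terms), but this should be said explicitly. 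With those caveats your argument goes through and your summation over $k$ matches the paper's.
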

The proof of this lemma for $n=1$ can be found in \cite[Lemma 3.3]{CruzTolsa}. The  case $n\geq2$ is quite technical but uses the same tools, its proof can be found in the appendix.

\subsection{A family of convolution operators in the plane}\label{secOperators}
\begin{definition}
Consider a function $K:\C \setminus\{0\} \to \C$. For any $f\in L^1_{loc}$ we define
$$T^K f(z)=\lim_{\varepsilon\to 0}\int_{\C \setminus B_\varepsilon(z)}K(z-w)f(w) \,dm(w)$$
as long as the limit exists, for instance, when $K$ is bounded away from $0$, $f\in L^1$ and $z\notin \supp(f)$ or when $f=\chi_U$ for an open set $U$ with $z\in U$, $\int_{B_\varepsilon(0)\setminus B_{\varepsilon'}(0)} K \, dm =0$ for every $\varepsilon>\varepsilon'>0$ and $K$ is integrable at infinity. We say that $K$ is the kernel of $T^K$. 

For any multiindex $\gamma \in \Z^2$, we will consider $K^\gamma(z)=z^{\gamma}=z^{\gamma_1}\bar{z}^{\gamma_2}$ and then we will put shortly $T^\gamma f:=T^{K^\gamma} f$, that is,
\begin{equation}\label{eqTgamma}
T^\gamma f(z)=\lim_{\varepsilon\to 0}\int_{\C \setminus B_\varepsilon(z)}(z-w)^\gamma f(w) \,dm(w)
\end{equation}
as long as the limit exists.

For any operator $T$ and any domain $\Omega$, we can consider $T_\Omega f= \chi_\Omega \, T(\chi_\Omega\, f)$.
\end{definition}

\begin{example}
As the reader may have observed, the Beurling transform is in that family of operators. Namely, when $K(z)=z^{-2}$, that is, for $\gamma=(-2,0)$, then $\frac{-1}{\pi}T^\gamma$ is the Beurling transform. The operator $\frac1\pi T^{(-1,0)}$  is the so-called Cauchy transform which we denote by $\Cauchy$.

Consider the iterates of the Beurling transform $\Beurling^m$ for $m>0$. For every $f \in L^p$ and $z\in \C$ we have
\begin{align}\label{eqIterateBeurling}
\Beurling^mf(z)
	& =\frac{(-1)^m m}{\pi}\lim_{\varepsilon\to 0}\int_{|z-\tau|>\varepsilon}\frac{(\overline{z-\tau})^{m-1}}{(z-\tau)^{m+1}}f(\tau)\, dm(\tau) =\frac{(-1)^m m}{\pi}T^{(-m-1,m-1)}f(z)
\end{align}
(see \cite[Section 4.2]{AstalaIwaniecMartin}). That is, for $\gamma=(\gamma_1, \gamma_2)$ with $\gamma_1+\gamma_2=-2$ and $\gamma_1\leq -2$, the operator $T^\gamma$ is an iteration of the Beurling transform modulo constant, and it maps $L^p(U)$ to itself for every open set $U$. If $\gamma_2\leq -2$ instead, then $T^\gamma$ is an iterate of the conjugate Beurling transform and it is bounded in $L^p$ as well.
 \end{example}

\section{The characteristic function}\label{secCharacteristic}
\subsection{The case of unbounded domains $\Omega\subset \C$}\label{secCharacteristicSpecial}
\begin{definition}\label{defAdmissible}
Given $n \in\N$, $1<p<\infty$, $\delta>0$ and $R>0$, we say that $\Omega=\{x+i\,y\in\C: y>A(x)\}$ is a $(\delta,R,n,p)$-admissible domain with defining function $A$ if 
\begin{itemize}
\item the defining function $A\in B^{n+1-1/p}_{p,p}\cap C^{n-1,1}$,
\item we have $A(0)=0$ and, if $n\geq 2$,  $A'(0)=0$,
\item and we have Lipschitz bounds on the function and its derivatives $\norm{A^{(j)}}_{L^\infty}<\frac{\delta}{R^{j-1}}$ for $1\leq j\leq n$.
\end{itemize}

We associate a Whitney covering $\mathcal{W}$ with appropriate constants to $\Omega$. The constants will be fixed along this section, depending on $n$ and $\delta$.
\end{definition}

In this Section we will prove the next result for the operators $T^\gamma$ defined in \rf{eqTgamma}.
\begin{theorem}\label{mtheorem}
Consider  $\delta,R,\epsilon>0$, $p>1$ and a natural number $n\geq1$. There exists a radius $\rho_\epsilon<R$ such that for every $(\delta,R,n,p)$-admissible domain
 $\Omega$ and every multiindex $\gamma \in \Z^2$ with $\gamma_1+\gamma_2=-n-2$ and $\gamma_1\cdot \gamma_2\leq 0$, we have that $T^\gamma \chi_\Omega \in L^p(\Omega\cap B(0,\rho_\epsilon))$ and, if $A$ is the defining function of $\Omega$, then the estimate
$$\norm{T^\gamma\chi_\Omega}_{L^p(\Omega\cap B(0, \rho_\epsilon))}^p\leq C \left( \norm{A}^p_{\dot B_{p,p}^{n-1/p+1}(-5\rho_\epsilon,5\rho_\epsilon)}+  \rho_\epsilon^{2-np}(1+\epsilon)^{|\gamma|p}\right)$$
is satisfied, where  $C$ depends on $p$, $n$ and the Lipschitz character of $\Omega$ (see Figure \ref{figUnboundedAdmissible}). 
\end{theorem}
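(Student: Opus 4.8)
The plan is to reduce the $L^p$ estimate for $T^\gamma\chi_\Omega$ on $\Omega\cap B(0,\rho_\epsilon)$ to a sum over the Whitney cubes $Q\in\mathcal{W}$ with $Q\subset B(0,\rho_\epsilon)$, and on each such cube to control $T^\gamma\chi_\Omega$ by comparison with the domain $\Omega_P$ bounded by the graph of a polynomial $P=\mathbf{R}^n_{3I}A$ approximating $A$ on a suitable interval $I$ associated to $Q$ (namely the interval obtained by projecting the "shadow" of $Q$ onto the real axis, enlarged as needed). First I would split
\begin{equation*}
\norm{T^\gamma\chi_\Omega}_{L^p(\Omega\cap B(0,\rho_\epsilon))}^p \lesssim \sum_{Q\subset B(0,\rho_\epsilon)} \norm{T^\gamma\chi_\Omega - T^\gamma\chi_{\Omega_{P_Q}}}_{L^p(Q)}^p + \sum_{Q} \norm{T^\gamma\chi_{\Omega_{P_Q}}}_{L^p(Q)}^p,
\end{equation*}
where $P_Q$ is the approximating polynomial of $A$ at the scale of $Q$. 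The first sum is the ``geometric'' term: on $Q$, the kernel $K^\gamma(z-w)=(z-w)^{\gamma}$ with $|\gamma|=n+2$ is, against the symmetric difference $\Omega\Delta\Omega_{P_Q}$, an essentially bounded-by-$\dist(Q,\partial\Omega)^{-n-2}$ quantity (using $\gamma_1\gamma_2\le 0$ so that $|K^\gamma(z)|\approx|z|^{-n-2}$), and $\Omega\Delta\Omega_{P_Q}$ lives in a strip of width $\approx \sup_{3I}|A-P_Q|$; integrating, $|T^\gamma\chi_\Omega-T^\gamma\chi_{\Omega_{P_Q}}(z)| \lesssim \ell(Q)^{-n-2}\int_{3I}|A-\mathbf{R}^n_{3I}A|$, which after raising to the $p$-th power, summing, and invoking the definition of $\beta_{(n)}$ together with \rf{eqNormBetas} yields the term $\norm{A}_{\dot B^{n+1-1/p}_{p,p}}^p$. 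This is exactly the content that Lemma \ref{lemNormBeta} is designed to feed into.

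The second sum --- estimating $T^\gamma\chi_{\Omega_P}$ on $Q$ when $\Omega_P$ is bounded by a polynomial graph --- is the heart of the matter and is where I expect the main obstacle. Unlike the half-plane case of \cite{CruzTolsa}, where the derivative of the Beurling transform of the characteristic function vanishes off the boundary, here $T^\gamma\chi_{\Omega_P}$ is genuinely nonzero and one must exploit cancellation of the kernel carefully. The idea is to use that $K^\gamma$ with $\gamma_1+\gamma_2=-n-2$ is (up to constants) an $(n+1)$-st $\partial$- or $\bar\partial$-derivative of a kernel like $z^{-1}$ (i.e.\ $T^\gamma$ relates to $\partial^{n+1}$ or $\bar\partial^{n+1}$ of the Cauchy transform), so that $T^\gamma\chi_{\Omega_P}$ can be written via Green's formula \rf{eqGreen} as a boundary integral over the graph of $P$, reducing a plane integral to a one-dimensional principal-value integral of a rational function. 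Then the Complex Rolle Theorem \ref{theoRolle} enters: after the change of variables $w = x + iP(x)$, the relevant holomorphic function $z - (x+iP(x))$ has controlled zeros, and Rolle's theorem lets one locate the ``bad'' points where the denominator is small and bound the oscillation of the resulting rational integrand. Tracking the dependence on $|\gamma|$ through this argument --- each application of the kernel identity or each differentiation costs a factor controlled by the degree and by $\rho_\epsilon$ relative to $R$ --- is what produces the $(1+\epsilon)^{|\gamma|p}$ factor, provided $\rho_\epsilon$ is chosen small enough (depending on $\epsilon$) that the polynomial $P$ is uniformly close to its linear part on $(-5\rho_\epsilon,5\rho_\epsilon)$, i.e.\ the graph of $P$ is nearly flat there.

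Finally, I would assemble the pieces: choosing $\rho_\epsilon$ small enough that all the $C^{n-1,1}$ Lipschitz bounds give $\norm{A'}_{L^\infty(-5\rho_\epsilon,5\rho_\epsilon)}$ and the higher-order terms as small as the bookkeeping of the previous paragraph requires; summing the polynomial-domain estimates against the number of Whitney cubes in $B(0,\rho_\epsilon)$, which contributes the volume factor $\rho_\epsilon^{2-np}$ (the power $2-np$ coming from $\sum_Q \ell(Q)^{2-np}$, i.e.\ $p$ derivatives' worth of scaling minus the $np$ and plus the $2$ of the area); and combining with the geometric term from the first sum. The main technical obstacle, to repeat, is the polynomial-graph estimate with its explicit $|\gamma|$-dependence --- this is the step the excerpt flags as ``entangling the more subtle details'' and the one requiring the Complex Rolle Theorem; everything else is organization of Whitney-cube sums and an application of the already-stated Lemma \ref{lemNormBeta} and Dorronsoro-type identity \rf{eqNormBetas}.
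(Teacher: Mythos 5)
Your overall architecture coincides with the paper's: split $T^\gamma\chi_\Omega$ on each Whitney cube into $T^\gamma\chi_{\Omega_{P_Q}}$ plus the contribution of the symmetric difference $\Omega\Delta\Omega_{P_Q}$, bound the latter by betas and the former by a flat-polynomial estimate proved via Green's formula, the change of variables $w=x+iP(x)$, and the Complex Rolle Theorem. However, there are two concrete gaps. First, your estimate $|T^\gamma\chi_\Omega-T^\gamma\chi_{\Omega_{P_Q}}(z)|\lesssim \ell(Q)^{-n-2}\int_{3I}|A-\mathbf{R}^n_{3I}A|$ is false as stated: the symmetric difference is an unbounded set, and outside $3I$ the polynomial $P_Q$ (chosen at the scale of $Q$) no longer approximates $A$, so $|A-P_Q|$ can grow polynomially there. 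The correct treatment decomposes $\Omega\Delta\Omega_{P_Q}$ into dyadic vertical strips $S_j$ of width $2^j\ell(Q)$, where the kernel is $\approx(2^j\ell(Q))^{-n-2}$, and controls $|(\Omega\Delta\Omega_{P_Q})\cap S_j|$ by \emph{telescoping} through the chain of dyadic intervals $\pi(Q)=J_0\subset J_1\subset\cdots\subset J_j$, using that $\norm{\mathbf{R}^n_{3J_{k+1}}-\mathbf{R}^n_{3J_k}}_{L^1(3J_j)}\lesssim(\ell(J_j)/\ell(J_k))^{n+1}\norm{\cdot}_{L^1(3J_k)}$; this is what makes a \emph{sum} of betas over all scales $\geq\ell(Q)$ appear, and the subsequent passage from $\sum_Q|Q|\bigl(\sum_{I\supset\pi(Q)}\beta_{(n)}(I)\ell(I)^{-n}\bigr)^p$ to $\sum_I\beta_{(n)}(I)^p\ell(I)^{1-np}$ requires an additional H\"older/Fubini argument that your outline omits.

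Second, your accounting of the error term is wrong: the sum $\sum_{Q\subset B(0,\rho_\epsilon)}\ell(Q)^{2-np}$ does not equal $\rho_\epsilon^{2-np}$ — since $np>1$ it \emph{diverges}, being dominated by the smallest Whitney cubes. The term $\rho_\epsilon^{2-np}(1+\epsilon)^{|\gamma|p}$ arises because the polynomial-domain bound must be \emph{uniform at the fixed scale $\rho_\epsilon$}, namely $|T^\gamma\chi_{\Omega_{P_Q}}(z)|\lesssim(1+\epsilon)^{|\gamma|}\rho_\epsilon^{-n}$ for all $z\in B(0,\rho_\epsilon)$ regardless of $\ell(Q)$, and one then simply integrates this constant over $B(0,\rho_\epsilon)$. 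Obtaining a bound at scale $\rho_\epsilon$ rather than $\ell(Q)$ is precisely where the cancellation must be pushed to the full exterior window: Green's formula leaves a boundary integral over the graph $\{x+iP(x):|x|\leq\rho_{ext}\}$, one expands $(\overline{z-w})^{j_2}=\bigl((\bar z-z+2iP(x))+(z-w)\bigr)^{j_2}$ binomially, and shifts each resulting contour integral off the real axis (legitimate because $\tau\mapsto\tau+iP(\tau)$ is injective on the window, via Rolle); the binomial sum is what produces $(1+C\rho_{ext}^{1/2})^{j_2}\leq(1+\epsilon)^{|\gamma|}$. Your sketch names the right tools but neither supplies the uniform-in-$Q$ scale nor the mechanism generating the base $(1+\epsilon)$, and with the per-cube scaling you propose the final summation fails.
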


\begin{figure}[ht]
 \centering
 {\includegraphics[width=0.6\textwidth]{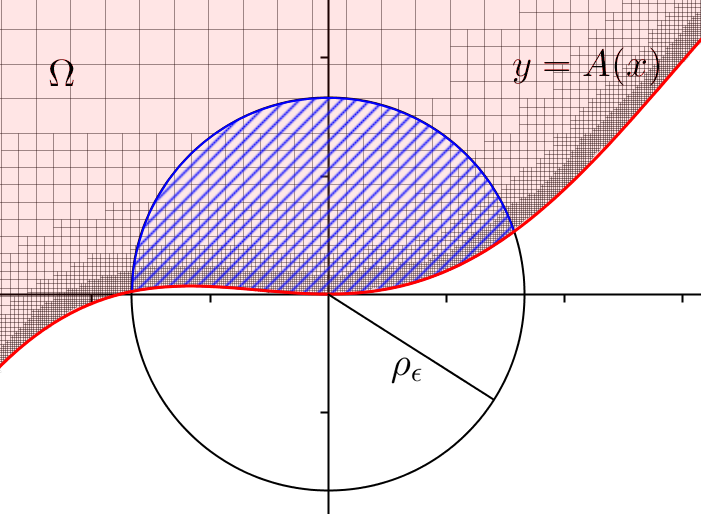}}    
  \caption{Disposition in Theorem \ref{mtheorem}.}\label{figUnboundedAdmissible}
\end{figure}

\begin{definition}
Consider $n \in\N$, $1<p<\infty$, $\delta>0$, $R>0$ and a $(\delta,R,n,p)$-admissible domain with defining function $A$. Then, for every interval $I$ we have an approximating polynomial $\mathbf{R}^n_{3I}:=\mathbf{R}^n_{3I}A$, and
$$\beta_{(n)}(I):=\frac{1}{\ell(I)}\int_{3I}\frac{|A(x)-\mathbf{R}^n_{3I}(x)|}{\ell(I)}\, dx.$$
We call 
$$\Omega^n_I:=\{x+i\,y: y>\mathbf{R}^n_{3I}(x)\}.$$
Let $\pi: \C \to \R$ be the vertical projection (to the real axis) and $Q$ a cube in $\C$. If $\pi(Q)=I$ we will write $\Omega^n_Q:=\Omega^n_I$.
\end{definition}

\begin{remark}
 Note that $\pi$ sends dyadic cubes of $\C$ to dyadic intervals of $\R$ and, in particular, any dyadic interval has a finite number of pre-images in the Whitney covering $\mathcal{W}$ of $\Omega$ uniformly bounded by a constant depending on $\delta$ and the Whitney constants of $\mathcal{W}$.
 \end{remark}

\begin{proof}[Proof of Theorem \ref{mtheorem}]
By \rf{eqNormBetas} we have that $\sum_{I\in \mathcal{D}}\left(\frac{\beta_{(n)}(I)}{\ell(I)^{n-1/p}}\right)^p\ell(I) \approx \norm{A}^p_{\dot B_{p,p}^{n-1/p+1}},$
and, by \rf{eqBesovRestricted} we get
$$\sum_{I\in \mathcal{D}_\epsilon}\left(\frac{\beta_{(n)}(I)}{\ell(I)^{n-1/p}}\right)^p\ell(I) \lesssim \norm{A}^p_{\dot B_{p,p}^{n-1/p+1}(-5\rho_\epsilon,5\rho_\epsilon)},$$
where $\mathcal{D}_\epsilon$ stands for $\{I\in\mathcal{D}: \ell(I)\leq 2\rho_\epsilon \mbox{ and } I\subset (-3\rho_\epsilon, 3\rho_\epsilon)\}$.
Thus, it is enough to prove that 
\begin{equation}\label{eqTargetNorm}
\norm{T^{\gamma}\chi_\Omega}_{L^p(\Omega\cap B(0, \rho_\epsilon))}^p\leq C \left( \sum_{I\in \mathcal{D}_\epsilon}\left(\frac{\beta_{(n)}(I)}{\ell(I)^{n-1/p}}\right)^p\ell(I) + \rho_\epsilon^{2-np}(1+\epsilon)^{|\gamma|p}\right).
\end{equation}

We begin the proof by some basic observations. Let $j_1,j_2\in\Z$ such that $j_2\neq j_1+1$. Then, the line integral
\begin{equation}\label{eqCancellation}
\int_{\partial\DDD} w^{j_1} \overline{w}^{j_2} \, dw = i \int_0^{2\pi} e^{i\theta(j_1-j_2+1)}d\theta = 0.
\end{equation}
If, moreover, $j_2>0$, given $0<\varepsilon<1$ Green's formula \rf{eqGreen} says that
\begin{equation}\label{eqCancellationGreen}
\int_{\DDD\setminus B(0,\varepsilon)} w^{j_1} \overline{w}^{j_2-1} \, dm(w)=\frac{i}{2 j_2} \left(\int_{\partial\DDD} - \int_{\partial B(0,\varepsilon)}\right) w^{j_1} \overline{w}^{j_2} \, dw=0.
\end{equation}

Consider a given $\gamma\in\Z^2$ with $\gamma_1+\gamma_2=-n-2$ and assume that $\gamma_2\geq0$ (the case $\gamma_1\geq 0$ can be proven mutatis mutandis). Consider a Whitney cube $Q$ and $z\in B(0,\rho_\epsilon)\cap Q$. Then by \rf{eqCancellationGreen} we have that
\begin{align}\label{eqDosSumands}
|T^{\gamma}\chi_\Omega(z)|
	& =  \left|\int_{|z-w|> \ell(Q)} (w-z)^{\gamma}\chi_\Omega(w) \, dm(w) \right| \\
\nonumber	& \leq  \left|\int_{|z-w|> \ell(Q)} (w-z)^{\gamma}\chi_{\Omega^n_Q}(w) \, dm(w) \right| + \int_{|z-w|> \ell(Q)} \frac{|\chi_{\Omega^n_Q}(w)-\chi_\Omega(w)|}{|w-z|^{n+2}} \, dm(w) .
\end{align}
If we have taken appropriate Whitney constants, then we also have that $\ell(Q)<\dist(Q,\partial\Omega^n_Q)$ (see  Remark \ref{remApproxPoly}) and, thus, by \rf{eqCancellationGreen} again, we have that 
\begin{equation}\label{eqBackToOperator}
 \int_{|z-w|> \ell(Q)} (w-z)^{\gamma}\chi_{\Omega^n_Q}(w) \, dm(w) = T^{\gamma}\chi_{\Omega^n_Q}(z).
 \end{equation}
We will see in Section \ref{secPoly} that the following claim holds.

\begin{claim}\label{claimPoly} There exists a radius $\rho_\epsilon$ (depending on $\delta$, $R$, $n$ and $\epsilon$) such that for every  $z\in B(0,\rho_\epsilon)$ with $z\in Q\in\mathcal{W}$, we have that
\begin{equation}\label{eqBoundOperator}
|T^{\gamma}\chi_{\Omega^n_Q}(z)|\lesssim_{n} \frac{(1+\epsilon)^{|\gamma|}}{\rho_\epsilon^n}.
\end{equation}
\end{claim}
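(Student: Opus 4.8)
The plan is to reduce the estimate to a statement about the Beurling-type transform of the characteristic function of a domain bounded by a polynomial graph, and then to exploit the holomorphy of $T^\gamma\chi_{\Omega^n_Q}$ away from the boundary together with the Complex Rolle Theorem \ref{theoRolle} to control its size. First I would recall that $\Omega^n_Q=\{x+iy:y>P(x)\}$ with $P=\mathbf{R}^n_{3I}A\in\mathcal{P}^n$, and that by Remark \ref{remApproxPoly} the coefficients of $P$ are controlled by the Lipschitz bounds on $A$, so on the relevant scales $P$ behaves like a small perturbation of a line; in particular $\dist(Q,\partial\Omega^n_Q)\gtrsim\ell(Q)$ once the Whitney constants are chosen well, so \rf{eqBackToOperator} applies and $T^\gamma\chi_{\Omega^n_Q}$ is a genuine principal-value-free integral at the point $z$.

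Next I would use that, on the open set $\C\setminus\partial\Omega^n_Q$, the function $T^\gamma\chi_{\Omega^n_Q}$ is, up to the constant relating $T^\gamma$ to $\Beurling^{|\gamma|-2}$ or its conjugate, holomorphic (or antiholomorphic): this is exactly the mechanism by which the Beurling transform of a half-plane has vanishing derivative, except now the boundary is a curve of higher degree. Concretely, one can write $T^\gamma\chi_{\Omega^n_Q}$ by integrating $(w-z)^\gamma$ over $\{y>P(x)\}$ and integrating by parts in the missing variable using \rf{eqCancellationGreen}, turning the area integral into a line integral over the graph of $P$; the resulting boundary integral is a holomorphic function of $z$ off the curve whose size is governed by the distance from $z$ to the graph and by the length of the portion of the graph within a bounded multiple of that distance. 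The key point is a scaling/normalization step: rescale by $\rho_\epsilon$ so that the relevant region becomes the unit ball, at which scale $P$ has derivatives bounded by a constant depending only on $\delta$ and $n$ and arbitrarily small higher-order coefficients once $\rho_\epsilon$ is taken small enough (this is where the radius $\rho_\epsilon$ depending on $\epsilon$ enters). Then the bound $|T^\gamma\chi_{\Omega^n_Q}(z)|\lesssim_n (1+\epsilon)^{|\gamma|}/\rho_\epsilon^n$ follows by estimating the rescaled holomorphic boundary integral, the factor $(1+\epsilon)^{|\gamma|}$ arising because $|\gamma|=n+2$ powers of the (slightly-larger-than-unit-modulus after rescaling) kernel accumulate, and the factor $\rho_\epsilon^{-n}$ from undoing the scaling ($|\gamma|-2=n$ net negative homogeneity).

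The main obstacle I expect is making the holomorphy argument quantitative and uniform in the multiindex $\gamma$: one must track how the integral of $(w-z)^{\gamma_1}(\overline{w-z})^{\gamma_2}$ over the polynomial-graph domain depends on $\gamma$ when $\gamma_2$ (say) is large, and show that the only cost is the exponential factor $(1+\epsilon)^{|\gamma|}$ rather than something worse like $|\gamma|!$ or $C^{|\gamma|}$ with $C$ a fixed constant bigger than $1+\epsilon$. This is precisely where the Complex Rolle Theorem enters: after reducing to a holomorphic function vanishing at suitable points of the boundary curve, one iterates Theorem \ref{theoRolle} to control derivatives and hence the growth in $|\gamma|$, and the convexity hypothesis there forces the careful choice of the small radius $\rho_\epsilon$ so that the relevant pieces of $\partial\Omega^n_Q$ lie in convex subregions. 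The remaining estimates — bounding line integrals over graphs of polynomials with small coefficients, and bookkeeping the powers of $\rho_\epsilon$ — are routine once this core growth estimate is in place, and I would defer them to Section \ref{secPoly} as the statement of the claim already anticipates.
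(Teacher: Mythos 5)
Your reduction is on the right track: the coefficient bounds for the approximating polynomial via Remark \ref{remCut} and the Mean Value Theorem, the local/non-local split, and Green's formula to pass to a line integral over the graph of $P$ all match the paper's Proposition \ref{propoPoly}. But the core quantitative step — the one that produces $(1+\epsilon)^{|\gamma|}$ rather than $C^{|\gamma|}$ for some fixed $C>1$ — is missing, and the substitutes you propose would not deliver it. First, a bookkeeping point: since $\gamma=(-j_1,j_2)$ with $j_1-j_2=n+2$, the modulus $|\gamma|=j_1+j_2=n+2+2j_2$ is unbounded (these are the iterates of $\Beurling$), so your statement that ``$|\gamma|=n+2$ powers of the kernel accumulate'' conflates $|\gamma|$ with the homogeneity $-(\gamma_1+\gamma_2)=n+2$. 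Rescaling by $\rho_\epsilon$ normalizes the homogeneity but does nothing about the $j_2$-dependence, because $|(\overline{z-w})/(z-w)|\equiv 1$, and a naive absolute-value bound of the boundary integral near the graph gives no gain in $j_2$ and in fact yields $\ell(Q)^{-n}$ rather than $\rho_\epsilon^{-n}$; cancellation must be extracted, and you have not said how.

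The paper's mechanism is: (i) expand binomially $(\overline{z-w})^{j_2}=\bigl((\bar z-z+2iP(x))+(z-w)\bigr)^{j_2}$ along the graph, so each term carries a small factor $(2i(P(x)-\imag(z)))^{j}$ against $j$ extra powers of $(z-(x+iP(x)))$ in the denominator; (ii) complexify the integrand in the graph variable $x\mapsto\tau$ and shift the contour from $I$ to the far sides of a half-rectangle $\mathcal{R}\subset\mathcal{Q}_{ext}$, where $|z-(\tau+iP(\tau))|\gtrsim\rho_{ext}$ while $|P(\tau)-\imag(z)|\lesssim\rho_{ext}^{3/2}+\rho_{int}$, so the binomial sum totals $(1+O(\rho_{ext}^{1/2}))^{j_2}$, and one sets $\rho_{ext}\approx(\epsilon/16)^2$. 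The Complex Rolle Theorem is used exactly once in this scheme, to show that $\tau\mapsto\tau+iP(\tau)$ is injective on $\mathcal{Q}_{ext}$ (from $\real(1+iP'(\tau))>\frac12$), hence the denominator has at most one zero there and the contour shift is legitimate in one of the two half-rectangles; it is not ``iterated to control derivatives and hence the growth in $|\gamma|$,'' and no workable iteration of that kind is visible in your outline. Without steps (i) and (ii) your plan does not close.
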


The last term in \rf{eqDosSumands} will bring the beta coefficients into play. Recall that we defined the symmetric difference of two sets $A_1$ and $A_2$ as $A_1\Delta A_2:=(A_1\cup A_2) \setminus (A_1\cap A_2)$. Our choice of the Whitney constants can grant that $3Q\subset \Omega^n_Q \cap \Omega$ so
\begin{align}\label{eqAbsoluteInside}
\int_{|z-w|> \ell(Q)} \frac{|\chi_{\Omega^n_Q}(w)-\chi_\Omega(w)|}{|w-z|^{n+2}} \, dm(w)
	& =\int_{\Omega^n_Q \Delta \Omega} \frac{1}{|w-z|^{n+2}} \, dm(w).
\end{align}
Next we split the domain of integration in vertical strips. Namely, if we call $S_j=\{w\in\C :  |\real (w-z)|\leq2^j \ell(Q)\}$ for $j\geq0$ and $S_{-1}=\emptyset$, we have that
\begin{align}\label{eqAnulli}
\int_{\Omega^n_Q \Delta \Omega} \frac{1}{|w-z|^{n+2}} \, dm(w)
\nonumber	& =\sum_{j\geq 0: \, 2^j\ell(Q) \leq \rho_\epsilon} \int_{(\Omega^n_Q \Delta \Omega) \cap S_j\setminus S_{j-1}} \frac{dm(w)}{|w-z|^{n+2}}+\int_{|w-z|>\rho_\epsilon/2} \frac{dm(w)}{|w-z|^{n+2}}\\
	& \lesssim \sum_{j\geq 0: \, 2^j\ell(Q) \leq \rho_\epsilon} \left|(\Omega^n_Q \Delta \Omega) \cap S_j \right| \frac{1}{(2^{j-1}\ell(Q))^{n+2}} +\frac{1}{\rho_\epsilon^n}.
\end{align}
We will see in Section \ref{secInterstitial} the following:

\begin{claim}\label{claimInter} We have that
\begin{equation}\label{eqBoundInter}
\left|(\Omega^n_Q \Delta \Omega) \cap S_j \right| \lesssim_{n} \sum_{\substack{I\in \mathcal{D}\\ \pi(Q)\subset I\subset 2^{j+1} \pi(Q)}} \frac{\beta_{(n)}(I)}{\ell(I)^{n-1}} (2^j \ell(Q))^{n+1}.
\end{equation}
\end{claim}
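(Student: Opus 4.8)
The plan is to peel off the two-dimensional measure into a one-dimensional integral and then telescope along the dyadic ancestors of $\pi(Q)$. Since $\Omega=\{x+iy:y>A(x)\}$ and, by definition, $\Omega^n_Q=\{x+iy:y>\mathbf{R}^n_{3\pi(Q)}(x)\}$, on each vertical line the set $\Omega^n_Q\Delta\Omega$ is exactly the segment lying between the heights $A(x)$ and $\mathbf{R}^n_{3\pi(Q)}(x)$; thus, if $J_j$ denotes the orthogonal projection of $S_j$ onto the real axis (an interval centered at $\real z$ of length $2^{j+1}\ell(Q)$), Fubini gives
\[
\bigl|(\Omega^n_Q\Delta\Omega)\cap S_j\bigr|=\int_{J_j}\bigl|A(x)-\mathbf{R}^n_{3\pi(Q)}(x)\bigr|\,dm(x).
\]
I would then let $\pi(Q)=I_{(0)}\subset I_{(1)}\subset\dots\subset I_{(j)}$ be the chain of dyadic ancestors of $\pi(Q)$, so $\ell(I_{(k)})=2^k\ell(Q)$, and check the elementary inclusions $J_j\subset 3I_{(j)}$ and $\pi(Q)\subset I_{(k)}\subset 2^{j+1}\pi(Q)$ for all $0\le k\le j$. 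Because the right-hand side of \rf{eqBoundInter} has only nonnegative summands and $I_{(0)},\dots,I_{(j)}$ are among the dyadic intervals appearing there, it suffices to bound $\int_{3I_{(j)}}\bigl|A-\mathbf{R}^n_{3\pi(Q)}\bigr|$ by $C_n\sum_{k=0}^{j}\frac{\beta_{(n)}(I_{(k)})}{\ell(I_{(k)})^{n-1}}(2^j\ell(Q))^{n+1}$.

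On $3I_{(j)}$ I would use the telescoping bound
\[
\bigl|A-\mathbf{R}^n_{3\pi(Q)}\bigr|\le\bigl|A-\mathbf{R}^n_{3I_{(j)}}\bigr|+\sum_{k=0}^{j-1}\bigl|\mathbf{R}^n_{3I_{(k+1)}}-\mathbf{R}^n_{3I_{(k)}}\bigr|.
\]
The first term integrates over $3I_{(j)}$ to $\ell(I_{(j)})^2\beta_{(n)}(I_{(j)})=\frac{\beta_{(n)}(I_{(j)})}{\ell(I_{(j)})^{n-1}}(2^j\ell(Q))^{n+1}$ by the very definition of $\beta_{(n)}$, giving the $k=j$ term. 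For the increments, each $P_k:=\mathbf{R}^n_{3I_{(k+1)}}-\mathbf{R}^n_{3I_{(k)}}$ is a polynomial of degree at most $n$; by the triangle inequality, \rf{eqApproxPolyChain} (since $3I_{(k)}\subset 3I_{(k+1)}$) and the definition of $\beta_{(n)}$ one controls $\frac1{\ell(I_{(k)})}\int_{3I_{(k)}}|P_k|\lesssim_n\ell(I_{(k)})\bigl(\beta_{(n)}(I_{(k)})+\beta_{(n)}(I_{(k+1)})\bigr)$, and since $\mathbf{R}^n_{3I_{(k)}}P_k=P_k$, Remark \ref{remApproxPoly} upgrades the left-hand side to $\sup_{3I_{(k)}}|P_k|$.

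The step I expect to be the main obstacle is the extrapolation of this sup bound from the small interval $3I_{(k)}$ to the large, concentric-up-to-a-bounded-factor interval $3I_{(j)}$. For a polynomial of degree $\le n$, dilating the interval by a factor $\lambda\ge1$ multiplies the supremum by at most $C_n\lambda^n$ — this is immediate from the equivalence of all norms on the finite dimensional space $\mathcal{P}^n$ together with a rescaling — and it is precisely at this point that the power $n$ in the denominator $\ell(I)^{n-1}$ of \rf{eqBoundInter} is generated, so one must keep careful track of the dilation factors and of the degree. Since $\ell(I_{(j)})/\ell(I_{(k)})=2^{j-k}$, this yields $\sup_{3I_{(j)}}|P_k|\lesssim_n 2^{(j-k)n}\ell(I_{(k)})\bigl(\beta_{(n)}(I_{(k)})+\beta_{(n)}(I_{(k+1)})\bigr)$.

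Finally, integrating over $3I_{(j)}$ (of length $\approx 2^j\ell(Q)$) and inserting $\ell(I_{(k)})=2^k\ell(Q)$ and $\ell(I_{(j)})=2^j\ell(Q)$, the elementary identity $j+k+(j-k)n=j(n+1)-k(n-1)$ turns the last estimate into $\int_{3I_{(j)}}|P_k|\lesssim_n\frac{\beta_{(n)}(I_{(k)})+\beta_{(n)}(I_{(k+1)})}{\ell(I_{(k)})^{n-1}}(2^j\ell(Q))^{n+1}$. Summing over $0\le k\le j-1$, adding the $k=j$ term, and absorbing the double appearance of some intervals into the constant gives \rf{eqBoundInter}.
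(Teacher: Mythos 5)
Your proposal is correct and follows essentially the same route as the paper: Fubini to reduce the area of the symmetric difference to $\int|A-\mathbf{R}^n_{3\pi(Q)}|$, a telescoping sum along the dyadic chain from $\pi(Q)$ up to the interval of length $2^j\ell(Q)$, the definition of $\beta_{(n)}$ for the top term, and a degree-$n$ polynomial dilation estimate for the increments. The only cosmetic difference is that you dilate in $L^\infty$ (factor $\lambda^n$) and then multiply by the length of $3I_{(j)}$, whereas the paper dilates the $L^1$ norm directly (factor $\lambda^{n+1}$); the two are equivalent and the exponent bookkeeping matches.
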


Summing up, plugging \rf{eqBackToOperator} and \rf{eqBoundOperator} in the first term of the right-hand side of \rf{eqDosSumands} and plugging \rf{eqAbsoluteInside}, \rf{eqAnulli} and \rf{eqBoundInter} in the other term, we get
$$|T^{\gamma}\chi_\Omega(z)| \lesssim_{n} \sum_{\substack{j\geq 0\\ 2^j\ell(Q) \leq \rho_\epsilon}} \sum_{\substack{I\in \mathcal{D}\\ \pi(Q)\subset I\subset 2^{j+1} \pi(Q)}} \frac{\beta_{(n)}(I)}{\ell(I)^{n-1}} (2^j \ell(Q))^{n+1} \frac{1}{(2^{j}\ell(Q))^{n+2}} + \frac{(1+\epsilon)^{|\gamma|}}{\rho_\epsilon^n}.$$
Note that the intervals $I$ in the previous sum are in $\mathcal{D}_\epsilon=\{I\in\mathcal{D}: \ell(I)\leq 2\rho_\epsilon \mbox{ and } I\subset (-3\rho_\epsilon, 3\rho_\epsilon)\}$. Reordering and computing, 
\begin{align*}
|T^{\gamma}\chi_\Omega(z)|
	& \lesssim_{n}  \sum_{\substack{I\in \mathcal{D}_\epsilon\\  \pi(Q)\subset I}} \frac{\beta_{(n)}(I)}{\ell(I)^{n-1}} \sum_{\substack{j\in \N_0\\ I\subset 2^{j+1}
 \pi(Q)}}\frac{1}{2^{j}\ell(Q)} +  \frac{(1+\epsilon)^{|\gamma|}}{\rho_\epsilon^n} \lesssim \sum_{\substack{I\in \mathcal{D}_\epsilon\\  \pi(Q)\subset I}} \frac{\beta_{(n)}(I)}{\ell(I)^{n}} + \frac{(1+\epsilon)^{|\gamma|}}{\rho_\epsilon^n}.
\end{align*}

Raising to power $p$, integrating in $Q$ and adding we get that for $\rho_\epsilon$ small enough 
\begin{align}\label{eqAlmostDone}
\norm{T^{\gamma}\chi_\Omega}_{L^p(\Omega\cap B(0,\rho_\epsilon))}^p 
\nonumber	& \lesssim_{n} \sum_{\substack{Q\in\mathcal{W}\\ Q \cap B(0,\rho_\epsilon)\neq \emptyset}} |Q| \left( \sum_{\substack{I\in \mathcal{D}_\epsilon\\  \pi(Q)\subset I}} \frac{\beta_{(n)}(I)}{\ell(I)^{n}} + \frac{(1+\epsilon)^{|\gamma|}}{\rho_\epsilon^n} \right)^p  \\
			& \lesssim_p \sum_{\substack{Q\in\mathcal{W}\\ Q \cap B(0,\rho_\epsilon)\neq \emptyset}} |Q| \left( \sum_{\substack{I\in \mathcal{D}_\epsilon\\  \pi(Q)\subset I}} \frac{\beta_{(n)}(I)}{\ell(I)^{n}} \right)^p + \rho_\epsilon^{2-np}(1+\epsilon)^{|\gamma|p}.
	\end{align}
Regarding the double sum, we use H\"older's Inequality to find that
\begin{align}\label{eqDone}
 \sum_{\substack{Q\in\mathcal{W}\\  Q \cap B(0,\rho_\epsilon)\neq \emptyset}} |Q| \left( \sum_{\substack{I\in \mathcal{D}_\epsilon\\  \pi(Q)\subset I}} \frac{\beta_{(n)}(I)}{\ell(I)^{n}} \right)^p
\nonumber 	& \leq \sum_{Q\in\mathcal{W}} |Q|  \sum_{\substack{I\in \mathcal{D}_\epsilon\\  \pi(Q)\subset I}} \left(\frac{\beta_{(n)}(I)}{\ell(I)^{n-\frac1{2p}}}\right)^p \left( \sum_{\substack{I\in \mathcal{D}_\epsilon\\  \pi(Q)\subset I}} \frac{1}{\ell(I)^{\frac{p'}{2p}}}\right)^\frac{p}{p'}\\
			 & \lesssim_p \sum_{Q\in\mathcal{W}} \ell(Q)^2  \sum_{\substack{I\in \mathcal{D}_\epsilon\\  \pi(Q)\subset I}} \left(\frac{\beta_{(n)}(I)}{\ell(I)^{n-\frac1{2p}}}\right)^p \ell(Q)^\frac{-1}2 \\
\nonumber	 & \leq \sum_{I\in \mathcal{D}_\epsilon} \left(\frac{\beta_{(n)}(I)}{\ell(I)^{n-\frac1{2p}}}\right)^p \sum_{\substack{Q\in\mathcal{W}\\  \pi(Q)\subset I}}\ell(Q)^\frac32 \lesssim_{\mathcal{W}} \sum_{I\in \mathcal{D}_\epsilon} \left(\frac{\beta_{(n)}(I)}{\ell(I)^{n-\frac1{p}}}\right)^p \ell(I),
\end{align}
where the constant in the last inequality depends on the maximum number of Whitney cubes that can be projected to a given interval, depending only on $\delta$ and $n$.

Thus, by  \rf{eqAlmostDone} and \rf{eqDone} we have proven \rf{eqTargetNorm} when $\gamma_2\geq 0$. The case $\gamma_2\leq 0$ can be proven analogously.
\end{proof}


\subsection{The interstitial region}\label{secInterstitial}
\begin{proof}[Proof of Claim \ref{claimInter}]
\begin{figure}[ht]
 \centering
 {\includegraphics[width=\textwidth]{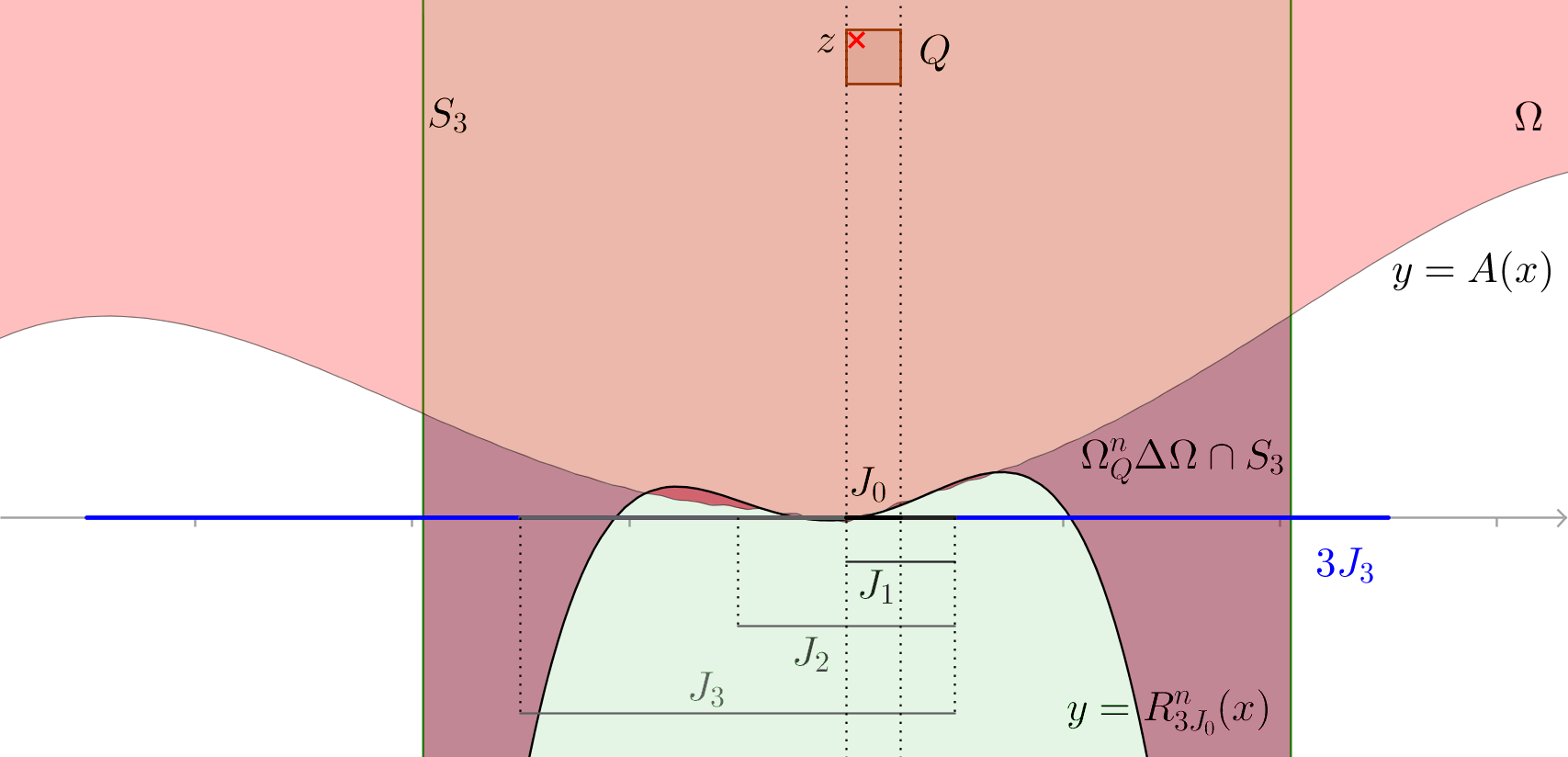}}    
  \caption{Disposition in the proof of Claim \ref{claimInter} for $N=3$.}\label{figBetaStepIn}
\end{figure}

Consider $N\geq 0$. Recall that we have a point $z\in Q\in\mathcal{W}$, and a vertical strip $S_N=\{w\in\C :  |\real (w-z)|\leq2^N \ell(Q)\}$. Let $J_0=\pi(Q)$ and let $J_{N}$ be the dyadic interval of length $2^{N}\ell(Q)$ containing $J_0$ (see Figure \ref{figBetaStepIn}). Then it is enough to see that
\begin{equation}\label{eqBoundByBetas}
\left|(\Omega^n_{Q} \Delta \Omega) \cap S_N \right| \lesssim_n \sum_{\substack{I\in \mathcal{D}\\ J_0 \subset I\subset J_N}} \beta_{(n)}(I)\frac{\ell(J_N)^{n-1}}{\ell(I)^{n-1}} \ell(J_N)^2.
\end{equation}

First note that
\begin{align}\label{eqFirstDecomposition}
\left|(\Omega^n_{Q} \Delta \Omega) \cap S_N \right|
	& = \int_{\real(z)-\ell(J_N)}^{\real(z)+\ell(J_N)}|A-\mathbf{R}^n_{3J_0}|\,dm_1 \\
\nonumber	& \leq \int_{3J_N}|A-\mathbf{R}^n_{3J_N}| \, dm_1 + \int_{3J_N}|\mathbf{R}^n_{3J_N}-\mathbf{R}^n_{3J_0}| \, dm_1=\squared{1}+\squared{2}.
\end{align}
Trivially, 
\begin{equation}\label{eqPart1}
\squared{1}=\beta_{(n)}(J_N)\ell(J_N)^2.
\end{equation}

To deal with the second term, we consider the chain of dyadic intervals
$$J_0\subset\cdots\subset J_k\subset J_{k+1}\subset \cdots \subset J_N,$$
with $0<k<N$ and $\ell(J_k)=2^k\ell(J_0)$.  We use the Triangle Inequality in the chain of intervals:
\begin{equation}\label{eqChainDecomposition}
\squared{2}\leq\sum_{k=0}^{N-1}\int_{3J_N} |\mathbf{R}^n_{3J_{k+1}}-\mathbf{R}^n_{3J_k}|\, dm_1 = \sum_{k=0}^{N-1}\norm{\mathbf{R}^n_{3J_{k+1}}-\mathbf{R}^n_{3J_k}}_{L^1(3J_N)}.
\end{equation}

For any polynomial $P(x)=\sum_{i=1}^n a_i x^i$ of degree $n$ and any interval $J$ centered at $0$, using the linear map $\phi$ that sends the interval $(-1,1)$ to $J$ as a change of coordinates, we have that
$$\norm{P}_{L^1(J)}\approx \ell(J) \norm{P\circ \phi}_{L^1(-1,1)},$$
and using the fact that all norms in a finite dimensional vector space are equivalent (in particular the $L^1(-1,1)$ norm and the sum of coefficients) we have that 
$$\norm{P}_{L^1(J)}\approx_n \ell(J) \sum_{i=1}^n \ell(J)^i\left| a_i\right|.$$ 
By the same token, for any $k_0\in \N$, we get
$$\norm{P}_{L^1(2^{k_0} J)}\approx_n 2^{k_0} \ell(J) \sum_{i=1}^n \left(2^{k_0} \ell(J)\right)^i\left| a_i\right|\lesssim_n 2^{k_0 (n+1)} \norm{P}_{L^1(J)} .$$ 
Fix $0\leq k<N$. Then
$$\norm{\mathbf{R}^n_{3J_{k+1}}-\mathbf{R}^n_{3J_k}}_{L^1(3J_N)}\lesssim_n\norm{\mathbf{R}^n_{3J_{k+1}}-\mathbf{R}^n_{3J_k}}_{L^1(3J_k)}\frac{\ell(J_N)^{n+1}}{\ell(J_k)^{n+1}},$$
with constants depending only on $n$. Thus, we have that
\begin{align}\label{eqBetasIntoPlay}
\norm{\mathbf{R}^n_{3J_{k+1}}-\mathbf{R}^n_{3J_k}}_{L^1(3J_N)}
\nonumber	& \lesssim_n \left(\norm{\mathbf{R}^n_{3J_{k+1}}-A}_{L^1(3J_k)}+\norm{A-\mathbf{R}^n_{3J_k}}_{L^1(3J_k)}\right)\frac{\ell(J_N)^{n+1}}{\ell(J_k)^{n+1}}\\
		& \lesssim_n \left( \beta_{(n)}(J_{k+1})+\beta_{(n)}(J_k)\right)\frac{\ell(J_N)^{n+1}}{\ell(J_k)^{n+1}}\ell(J_k)^2.
\end{align}

Combining \rf{eqFirstDecomposition}, \rf{eqPart1}, \rf{eqChainDecomposition} and \rf{eqBetasIntoPlay} we get \rf{eqBoundByBetas}.
\end{proof}

\subsection{Domain bounded by a polynomial graph}\label{secPoly}

We will consider only very ``flat'' polynomials. Let us see what we can say about their coefficients.

\begin{lemma}
Let $n\geq 2$, $A\in C^{n-1,1}(\R)$  with $A(0)=0$, $A'(0)=0$, $\norm{A^{(j)}}_{L^\infty}<\frac{\delta}{R^{j-1}}$ for $j\leq n$ and consider two intervals $J$ and $I$ with $3J\subset I=[-R,R]$. Then we have the following bounds for the derivatives of the approximating polynomial $P=\mathbf{R}^n_JA$ in the interval $I$:
$$\norm{P^{(j)}}_{L^\infty(I)}\leq \frac{3^{n-j} \delta}{R^{j-1}} \mbox{\,\,\,\, for } j\leq n.$$
Furthermore, if $\rho>0$ and $3J\subset [-\rho,\rho]$, then 
\begin{equation}\label{eqNormPolinomialBetter}
\norm{P}_{L^\infty(-\rho,\rho)}\leq \frac{3^n\delta \rho^2 }R\mbox{\quad\quad and \quad\quad} \norm{P'}_{L^\infty(-\rho,\rho)}\leq \frac{3^{n-1} \delta \rho}R.
\end{equation}
\end{lemma}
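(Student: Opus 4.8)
The plan is to deduce all the stated bounds from one structural observation combined with a downward induction on the order of differentiation. By the orthogonality relations $\rf{eqOrthoApproxPoly}$ and linearity, $\int_J (A-P)Q\,dm=0$ for every $Q\in\mathcal{P}^n$, so, exactly as in the argument of Remark \ref{remCut}, the continuous function $A-P$ has at least $n+1$ sign changes, hence at least $n+1$ distinct zeros, in the interior of $J$. Since $A-P\in C^{n-1}$, applying Rolle's theorem $j$ times (which is legitimate for $j\le n-1$) shows that $(A-P)^{(j)}$ has at least $n+1-j\ge 2$ zeros in $J$; in particular there is a point $\xi_j\in J\subset I$ with $P^{(j)}(\xi_j)=A^{(j)}(\xi_j)$, and therefore $|P^{(j)}(\xi_j)|\le\norm{A^{(j)}}_{L^\infty}<\delta/R^{j-1}$.

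First I would settle the top order $j=n$. Taking two zeros $a<b$ of the Lipschitz function $(A-P)^{(n-1)}$, integrating $(A-P)^{(n)}=A^{(n)}-P^{(n)}$ over $(a,b)$, and using that $P^{(n)}$ is a constant, one gets $P^{(n)}=\frac{1}{b-a}\int_a^b A^{(n)}\,dm$, hence $\norm{P^{(n)}}_{L^\infty(\R)}=|P^{(n)}|\le\norm{A^{(n)}}_{L^\infty}<\delta/R^{n-1}$, which is the claim since $3^{n-n}=1$. For the inductive step, assume $\norm{P^{(j+1)}}_{L^\infty(I)}\le 3^{n-j-1}\delta/R^{j}$ for some $1\le j\le n-1$. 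For $x\in I=[-R,R]$ one writes $P^{(j)}(x)=P^{(j)}(\xi_j)+\int_{\xi_j}^x P^{(j+1)}\,dm$ and, using $|x-\xi_j|\le 2R$, estimates
\[
|P^{(j)}(x)|\le\frac{\delta}{R^{j-1}}+2R\,\frac{3^{n-j-1}\delta}{R^{j}}=\frac{\delta}{R^{j-1}}\left(1+2\cdot 3^{n-j-1}\right)\le\frac{3^{n-j}\delta}{R^{j-1}},
\]
the last inequality being just $1\le 3^{n-j-1}$, valid precisely because $j\le n-1$. This closes the induction; the case $j=0$, if it is wanted, is the case $\rho=R$ of the refined bound below.

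For the refined estimates, valid on $[-\rho,\rho]\subset I$ (so in the relevant range $\rho\le R$), I would bring in the vanishing conditions $A(0)=A'(0)=0$, the bound $\norm{A''}_{L^\infty}<\delta/R$, and the bound $\norm{P''}_{L^\infty(I)}\le 3^{n-2}\delta/R$ just proved. Choosing a zero $\xi_1\in J\subset[-\rho,\rho]$ of $(A-P)'$ one has $P'(\xi_1)=A'(\xi_1)$ with $|A'(\xi_1)|\le|\xi_1|\,\norm{A''}_{L^\infty}<\delta\rho/R$; then $P'(x)=P'(\xi_1)+\int_{\xi_1}^x P''\,dm$ and $|x-\xi_1|\le 2\rho$ give $\norm{P'}_{L^\infty(-\rho,\rho)}\le(1+2\cdot 3^{n-2})\,\delta\rho/R\le 3^{n-1}\delta\rho/R$. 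Similarly, choosing a zero $a\in J$ of $A-P$, Taylor's formula at $0$ gives $|P(a)|=|A(a)|\le\frac12 a^2\norm{A''}_{L^\infty}<\frac12\delta\rho^2/R$, and $P(x)=P(a)+\int_a^x P'\,dm$ with $|x-a|\le 2\rho$ gives $\norm{P}_{L^\infty(-\rho,\rho)}\le\left(\frac12+2\cdot 3^{n-1}\right)\delta\rho^2/R\le 3^n\delta\rho^2/R$.

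The routine parts are the Taylor estimates and the bookkeeping of geometric constants. The main obstacle is the top-order derivative: because $A\in C^{n-1,1}$ only, $A^{(n)}$ exists just almost everywhere and one cannot apply Rolle's theorem once more to $(A-P)^{(n-1)}$, so the step must instead go through the averaging identity for the constant $P^{(n)}$ over two zeros of the Lipschitz function $(A-P)^{(n-1)}$. The only other thing to watch is the choice of base $3$ in the constants, which is dictated by the inequality $1+2\cdot 3^{n-j-1}\le 3^{n-j}$ needed to propagate the induction (and, for the refined bounds, by $1+2\cdot 3^{n-2}\le 3^{n-1}$ and $\frac12+2\cdot 3^{n-1}\le 3^{n}$).
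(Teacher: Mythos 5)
Your proof is correct and takes essentially the same route as the paper's: common zeros of $A-P$ and of its successive derivatives obtained from the orthogonality relations plus Rolle's/the Mean Value Theorem, a downward induction on $j$ with the same base-$3$ bookkeeping, and the conditions $A(0)=A'(0)=0$ for the refined bounds on $[-\rho,\rho]$. The only immaterial differences are that you treat the top order by averaging $A^{(n)}$ between two zeros of $(A-P)^{(n-1)}$ where the paper uses the difference quotient of $A^{(n-1)}$ (both correctly sidestepping the fact that $A^{(n)}$ exists only a.e.), and that you bound $|A|$ at a common zero by a second-order Taylor estimate where the paper iterates first-order ones.
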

\begin{proof}
By Remark \ref{remCut} we know that there are at least $n+1$ common points $\tau_0^0, \cdots,\tau_n^0\in 3J$ for $A$ and $P$, that is, $A(\tau_j^0)=P(\tau_j^0)$ for every $j$. By the Mean Value Theorem, there are $n$ common points $\tau_0^1, \cdots, \tau_{n-1}^1\in 3J$ for their derivatives. By induction we find points $\tau_0^{k}\cdots\tau_{n-k}^{k}\in 3J$ where the $k$-th derivatives coincide for $0\leq k \leq n-1$, that is, $A^{(k)}(\tau_j^k)=P^{(k)}(\tau_j^k)$ for every  $0\leq j \leq n-k$.

Note that the polynomial derivative $P^{(n)}$, which is in fact a constant, coincides with the differential quotient of $P^{(n-1)}$ evaluated at any pair of points. In particular given $x\in \R$, for the points $\tau_0^{n-1}$ and $\tau_1^{n-1}$ we have that
$$\left|P^{(n)}(x)\right| = \left|\frac{P^{(n-1)}(\tau_0^{n-1})-P^{(n-1)}(\tau_1^{n-1})}{\tau_0^{n-1}-\tau_1^{n-1}}\right|
	= \left|\frac{A^{(n-1)}(\tau_0^{n-1})-A^{(n-1)}(\tau_1^{n-1})}{\tau_0^{n-1}-\tau_1^{n-1}}\right|\leq \frac{\delta}{R^{n-1}}.$$

Now we argue by induction again. Assume that $\norm{P^{(j+1)}}_{L^\infty(I)}\leq 3^{n-j-1}\delta/R^{j}$ for a certain $j\leq n-1$. Consider $x\in I$ and, by the Mean Value Theorem, there exists a point $\xi$ such that $ |P^{(j)}(x)-P^{(j)}(\tau^j_0)|=|P^{(j+1)}(\xi)||x-\tau^j_0|$. Thus, since $P^{(j)}(\tau^j_0)=A^{(j)}(\tau^j_0)$ we have that
\begin{align*}
|P^{(j)}(x)|
	 & \leq |P^{(j+1)}(\xi)||x-\tau^j_0|+|A^{(j)}(\tau^j_0)| \leq \frac{3^{n-j-1}\delta}{R^j}\,2R+\frac{\delta}{R^{j-1}}=\frac{3^{n-j}\delta}{R^{j-1}}.
\end{align*}

We have not used yet the fact that $A'(0)=A(0)=0$. Let us fix $\rho\leq R$ and assume that $3J\subset [-\rho,\rho]$. Then for every $x\in [-\rho,\rho]$, we can write $A'(x)=A'(x)-A'(0)$ so
\begin{equation}\label{eqEstimateA'}
|A'(x)|\leq \norm{A''}_{L^\infty(I)}|x| \leq    \frac{\delta}{R}\, \rho,
\end{equation}
and we can also write $P'(x)= P'(x)- P'( \tau_0^1) + A'( \tau_0^1)-A'(0)$, so
$$|P'(x)|\leq \norm{P''}_{L^\infty(I)}|x-\tau_0^1| + \norm{A''}_{L^\infty(I)}|\tau_0^1| \leq \frac{3^{n-2}\delta}{R}\, 2 \rho +  \frac{\delta}{R}\, \rho \leq \frac{3^{n-1}\delta\rho}{R}.$$
By the same token, and using the estimate \rf{eqEstimateA'} on $A'$, we get
$$|P(x)| \leq \norm{P'}_{L^\infty([-\rho,\rho])}|x-\tau_0^0| + \norm{A'}_{L^\infty([-\rho,\rho])}|\tau_0^0| \leq \frac{3^{n-1}\delta\rho}{R} \, 2 \rho + \frac{\delta\rho}{R} \, \rho \leq \frac{3^{n}\delta\rho^2}{R}.$$
\end{proof}

Now we can prove Claim \ref{claimPoly}. Recall that we want to find a radius $\rho_{int}<R$ depending on $\epsilon$ such that every point $z$ contained in a  Whitney cube $Q \subset B(0,\frac{\rho_{int}}{2})$ satisfies \rf{eqBoundOperator}, that is, 
\begin{equation*}
|T^{\gamma}\chi_{\Omega^n_Q}(z)|\lesssim_{n} \frac{(1+\epsilon)^{|\gamma|}}{\rho_{int}^n},
\end{equation*}
where $\gamma \in \{(-j_1,j_2): j_1,j_2\in \N_0 \mbox{ and }  j_1-j_2=n+2\}$ (recall that we assumed that $\gamma_2\geq 0$). 
According to the previous lemma, when $n\geq2$  we are dealing with a domain $\Omega^n_Q$ whose boundary is the graph of a polynomial $P(x)=\sum_{j=0}^n a_j x^j$ such that
\begin{align}\label{eqBoundCoefficients}
\nonumber	|a_0|	=|P(0)|		&\leq \frac{3^n \delta \rho_{int}^2}{R}, &&\\
\nonumber	|a_1|	=|P'(0)|		&\leq \frac{3^{n-1}\delta\rho_{int}}{R} &&\mbox{and}\\
			|a_j|	=\frac{|P^{(j)}(0)|}{j!}	&\leq \frac{3^{n-j} \delta}{j!R^{j-1}} && \mbox{for }2\leq j <n.
\end{align}
We call $\Omega_P:=\{ x+i\,y : y>P(x)\}$ to such a domain. Note that \rf{eqNormPolinomialBetter} implies that for $\rho_{int}$ small enough the polynomial $P$ is ``flat'', namely $|P(x)|<\frac{\rho_{int}}4$ for $|x|<\rho_{int}$.

One can think of the ``exterior'' radius $\rho_{ext}$ below as a geometric version of $\epsilon$, namely $\rho_{ext}=(\epsilon/16)^2$ if $\epsilon$ is small enough. Further, we can assume that $\rho_{ext}<R$.

\begin{proposition}\label{propoPoly}
Consider two real numbers $\delta, R>0$ and $n \geq 2$. For $\rho_{ext}$ small enough, there exists $0<\rho_{int}<\rho_{ext}$ depending also on $n$,  $\delta$ and $R$ such that for all $j_1, j_2 \in\N_0$ with $j_1-j_2=n+2$, all $P\in\mathcal{P}^n$ satisfying \rf{eqBoundCoefficients}, all $z\in Q(0, \rho_{int})\cap  \Omega_P$ and $0<\varepsilon<\dist(z, \partial\Omega_P)$ we have
\begin{equation}\label{eqPolyIntegralBounded}
\left| \int_{\Omega_P\setminus B(z,\varepsilon)} \frac{(\overline{z-w})^{j_2}}{(z-w)^{j_1}}\, dm(w)\right|\leq \frac{C_n}{\rho_{int}^{n}}\left(1+16\rho_{ext}^{1/2}\right)^{j_2},
\end{equation} 
 with $C_n$ depending only on $n$.
 
  If $n=1$ instead, then for all $j_1, j_2 \in\N_0$ with $j_1-j_2=3$ and all $P\in\mathcal{P}^1$ we have that
 \begin{equation*}
 \int_{\Omega_P\setminus B(z,\varepsilon)} \frac{(\overline{z-w})^{j_2}}{(z-w)^{j_1}}\, dm(w) =0.
\end{equation*} 
\end{proposition}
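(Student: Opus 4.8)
Since $z\in\Omega_P$ and, by the angular cancellation recorded in \rf{eqCancellation}, $\int_{B(0,\varepsilon)\setminus B(0,\varepsilon')}w^{\gamma}\,dm(w)=0$ whenever $\gamma_1+\gamma_2=-n-2\ (\neq0)$, the integral in the statement does not depend on $\varepsilon\in(0,\dist(z,\partial\Omega_P))$, so we may fix a small $\varepsilon$. Using $\tfrac{(\overline{z-w})^{j_2}}{(z-w)^{j_1}}=-\bar\partial_w\!\left(\tfrac{1}{j_2+1}\tfrac{(\overline{z-w})^{j_2+1}}{(z-w)^{j_1}}\right)$ for $w\neq z$, I apply \rf{eqGreen} on $(\Omega_P\cap B(0,M))\setminus B(z,\varepsilon)$ and let $M\to\infty$: the arc $|w|=M$ contributes nothing because $\tfrac{(\overline{z-w})^{j_2+1}}{(z-w)^{j_1}}$ decays like $|w|^{-(n+1)}$ (this is where $j_1-j_2=n+2$ with $n\geq1$ enters), and $\partial B(z,\varepsilon)$ contributes nothing by the same angular cancellation. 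Hence the integral equals, up to the factor $\tfrac{i}{2(j_2+1)}$, the line integral $\int_{\partial\Omega_P}\tfrac{(\overline{z-w})^{j_2+1}}{(z-w)^{j_1}}\,dw$. When $n=1$ the graph is a straight line; parametrising it affinely and expanding $(\overline{z-w})^{j_2+1}$ as a degree-$(j_2+1)$ polynomial in $(z-w)$, every summand is a constant times $(z-w)^{k}$ with $k\leq j_2+1-j_1=-2$ along the line, hence admits an antiderivative vanishing at both ends, so the line integral—and therefore the whole integral—is $0$. From now on $n\geq2$.

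\textbf{Complexification and location of the roots.} Parametrise $\partial\Omega_P$ by $\phi(x)=x+iP(x)$, $x\in\R$. Since $P$ has real coefficients, $\overline{\phi(x)}=\phi^{*}(x)$ with $\phi^{*}(\zeta):=\zeta-iP(\zeta)$, so the line integral becomes $\int_{\R}G(x)\phi'(x)\,dx$ with $G(\zeta)=\tfrac{(\bar z-\phi^{*}(\zeta))^{j_2+1}}{(z-\phi(\zeta))^{j_1}}$. The function $G\phi'$ is rational in $\zeta$, decays at least like $|\zeta|^{-2}$, has no pole on $\R$ (as $z\notin\partial\Omega_P$), and its poles are the roots of $\phi(\zeta)=z$. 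The key remark is that \emph{on $\R$} one has $|\bar z-\phi^{*}(x)|=|\overline{z-\phi(x)}|=|z-\phi(x)|$, so there $|G\phi'|=|\phi'(x)|/|z-\phi(x)|^{n+1}$ has no dependence on $j_2$. The root $\zeta_0$ of $\phi(\zeta)=z$ nearest $z$ satisfies $|\zeta_0-z|=|P(\zeta_0)|\lesssim_n\delta\rho_{int}^{2}/R$ by \rf{eqBoundCoefficients}, hence $|\zeta_0|\lesssim_n\rho_{int}$; and since $\phi$ is close to the identity and carries the part of the upper half-plane near $\R$ into $\Omega_P$, while $z\in\Omega_P$, we get $0<\imag\zeta_0\leq C_n\rho_{int}$. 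Every \emph{other} root $\zeta_*$ is forced far from the origin: by the Complex Rolle Theorem~\ref{theoRolle} applied to $f=z-\phi$ on the convex set $\{|\zeta|<\rho\}$, if $[\zeta_0,\zeta_*]$ lay in it we would obtain $c\in\,]\zeta_0,\zeta_*[\,$ with $\real(\phi'(c))=0$, i.e. $\imag P'(c)=1$, contradicting $\sup_{|\zeta|<\rho}|P'(\zeta)|<1$, which holds once $\rho\lesssim_n R/\delta$ by \rf{eqBoundCoefficients}; thus the segment must leave $\{|\zeta|<\rho\}$, and as $\zeta_0$ is near $0$ this yields $|\zeta_*|\gtrsim_{n,\delta}R$, far beyond $\rho_{ext}$.

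\textbf{Contour deformation.} Fix a large $C'_n$ (allowed to depend on $\delta$ and $\rho_{ext}$) and set $c:=C'_n\rho_{int}$, choosing $\rho_{int}$ small (depending on $n,\delta,R,\rho_{ext}$) so that $|P|\leq c/2$ on the window $W:=(\real z-\rho_{ext},\real z+\rho_{ext})$, $c\ll\rho_{ext}$, and $C'_n$ large enough to force the ratio bound below. Deform $\R$ to the contour $\Gamma'$ agreeing with $\R$ off $W$ and dipping, over $W$, down to $\{\imag\zeta=-c\}$, joined by two vertical segments of length $c$ at the endpoints of $W$. By the previous step, $G\phi'$ is holomorphic in the thin rectangle between $\R$ and $\Gamma'$ (the principal root sits above $\R$, the others far outside), so $\int_{\R}G\phi'=\int_{\Gamma'}G\phi'$. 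On $\R\setminus W$ one has $|G\phi'|=|\phi'(x)|/|z-\phi(x)|^{n+1}\leq|\phi'(x)|/|x-\real z|^{n+1}$ (using $\real(z-\phi(x))=\real z-x$), which integrates to $\lesssim_n(1+\delta)\rho_{ext}^{-n}\lesssim_n\rho_{int}^{-n}$, with no $j_2$. On the dipped portion, $|z-\phi(\zeta)|\gtrsim\max(c,|\real\zeta-\real z|)$; and since $\Gamma'$ lies only $O(c)$ below $\R$, writing $u=z-\phi(\real\zeta)$ one checks $\tfrac{|\bar z-\phi^{*}(\zeta)|}{|z-\phi(\zeta)|}$ behaves like $\tfrac{|u-ic|}{|u+ic|}$ with $|\imag u|\lesssim\rho_{int}\leq c/C'_n$, hence $\leq1+16\rho_{ext}^{1/2}$ for the right $C'_n$; so this portion contributes $\lesssim_n(1+16\rho_{ext}^{1/2})^{j_2+1}c^{-n}\lesssim_n\rho_{int}^{-n}(1+16\rho_{ext}^{1/2})^{j_2}$, and the same holds on the two vertical segments. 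Summing the three pieces gives the claim, the $\delta,R,\rho_{ext}$–dependence having been absorbed into the choice of $\rho_{int}$.

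\textbf{Main obstacle.} The delicate point is keeping the base of the $j_2$–exponential as small as $1+16\rho_{ext}^{1/2}$: estimating instead the residue of $G\phi'$ at the high-order pole $\zeta_0$ by brute force would produce a factor growing like $e^{j_2}$. The deformation avoids this by exploiting that the kernel ratio has modulus exactly $1$ on $\R$ and that the deformed path stays within $c\ll\rho_{ext}$ of $\R$ near $\real z$ while lifting the denominator above $c\approx\rho_{int}$—which is legitimate only because Theorem~\ref{theoRolle} has pushed all non-principal roots of $\phi(\zeta)=z$ far outside $\rho_{ext}$. Everything else (handling the unbounded domain in Green's formula, and making every constant come out depending on $n$ alone) is routine bookkeeping once this structure is in place.
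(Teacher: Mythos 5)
Your overall strategy is the same as the paper's: Green's formula to trade the area integral for a line integral over $\partial\Omega_P$, complexification of the parametrized integrand, the Complex Rolle Theorem to show $\zeta\mapsto\zeta+iP(\zeta)$ is injective so that $z-\phi(\zeta)$ has a single nearby zero lying on the $\Omega_P$-side of $\R$, and then a contour deformation into the pole-free half, using that the ratio $|\bar z-\phi^*(\zeta)|/|z-\phi(\zeta)|$ equals $1$ on $\R$ and stays close to $1$ on the deformed path. (Your treatment of $n=1$ via a Laurent expansion along the line, and your direct Green's formula on the unbounded domain instead of the paper's preliminary local/non-local splitting, are fine cosmetic variants.) The difference is your choice of contour: a shallow dip of depth $c=C_n'\rho_{int}\ll\rho_{ext}$, versus the paper's rectangle $\partial\mathcal{R}\setminus I$ of depth $\rho_{ext}$, where $|z-(\tau+iP(\tau))|\geq\rho_{ext}/2-2\rho_{int}$ uniformly and the crude bound $|P(\tau)|\leq\rho_{ext}^{3/2}$ of \rf{eqBoundPolyItself} immediately yields the base $1+16\rho_{ext}^{1/2}$.

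The gap is in your estimate on the dipped portion. You assert that the ratio behaves like $|u-ic|/|u+ic|$ with $|\imag u|\lesssim\rho_{int}$, but $\imag u=\imag z-P(\real\zeta)$, and $|P(\real\zeta)|$ is \emph{not} $O(\rho_{int})$: for an admissible $P$ such as $\tfrac{3^{n-2}\delta}{2R}x^2$ it reaches $\approx\delta\rho_{ext}^2/R$ near the ends of $W$, which dwarfs $\rho_{int}$ and in general also $c$. Relatedly, the precondition ``$|P|\leq c/2$ on $W$ by choosing $\rho_{int}$ small'' is backwards: $\sup_W|P|$ does not shrink with $\rho_{int}$ while $c=C_n'\rho_{int}$ does, so it forces a \emph{lower} bound $c\gtrsim\delta\rho_{ext}^2/R$ that you never impose. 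And even granting $|P|\leq c/2$, the exact identity $\bar z-\phi^*(\zeta)=(z-\phi(\zeta))+2i(P(\zeta)-\imag z)$ together with $|z-\phi(\zeta)|\gtrsim c$ only gives a ratio $\leq 1+\tfrac{2\rho_{int}+c}{c/2}\approx 3$, i.e.\ a factor $3^{j_2}$, which destroys the statement. The estimate is repairable on your contour, but it needs the ingredient you only used to locate the principal root: by \rf{eqBoundCoefficients}, $|P(\xi)|\lesssim\tfrac{\delta}{R}(\rho_{int}^2+\xi^2)$, so wherever $|P(\real\zeta)|$ is large one also has $|z-\phi(\zeta)|\gtrsim|\real\zeta-\real z|\gtrsim|P(\real\zeta)|^{1/2}(R/\delta)^{1/2}$, and the ratio excess is then $\lesssim 1/C_n'+\delta\rho_{ext}/R\leq 16\rho_{ext}^{1/2}$ for $C_n'\gtrsim\rho_{ext}^{-1/2}$ and $\rho_{ext}$ small. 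Either supply this computation or push the contour down to depth $\sim\rho_{ext}$, where the issue disappears.
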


\begin{proof}
\begin{figure}[ht]
  \centering
 {\includegraphics[width=0.6\textwidth]{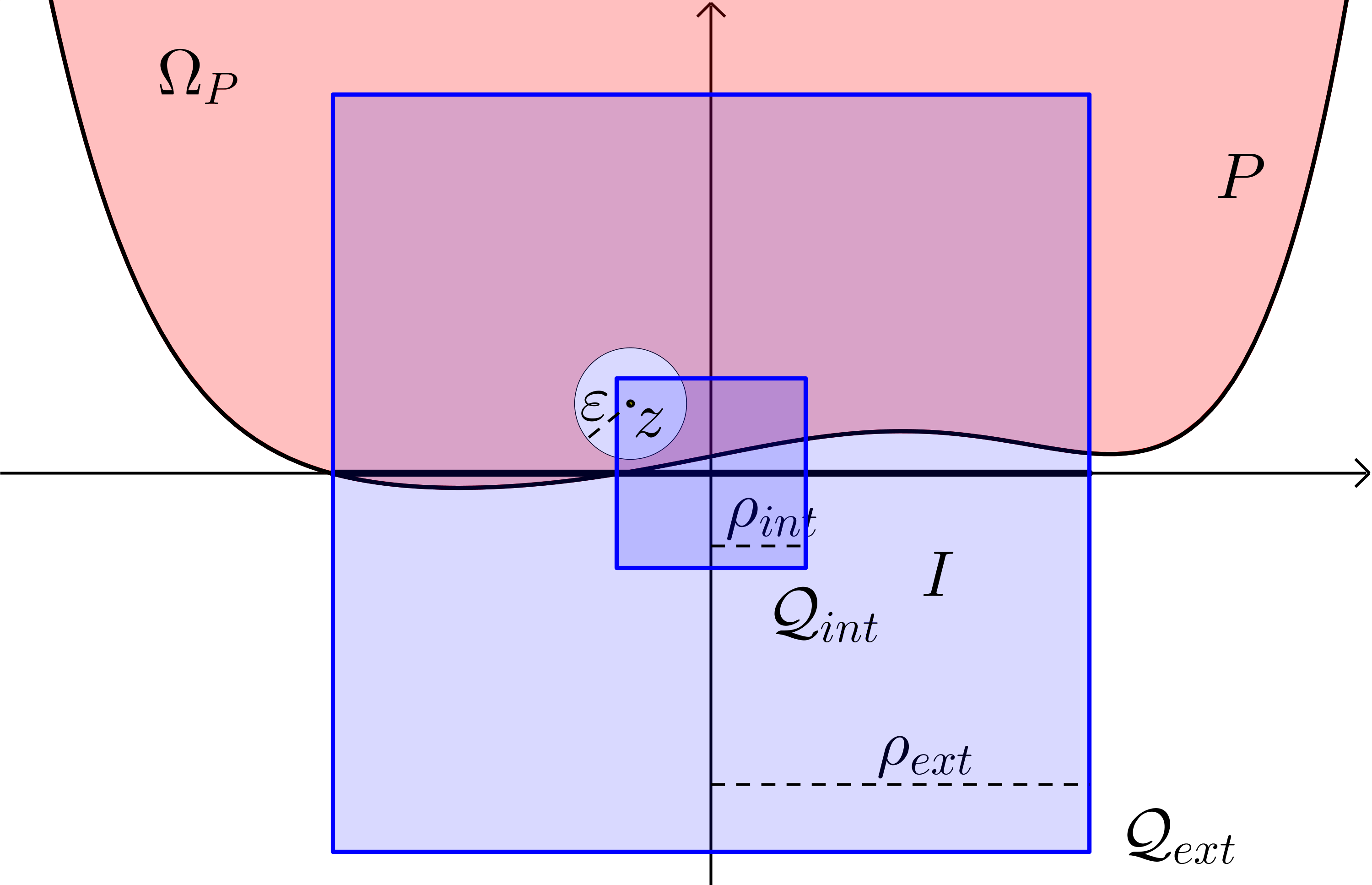}}    
   \caption{Disposition in Proposition \ref{propoPoly}.}\label{figIxI}
\end{figure}

First consider $n=1$. In that case, $\Omega_P$ is a half plane. By rotation and dilation, we can assume $\Omega_P=\R^2_+:=\{w=x+i\,y: \, y>0\}$. Note that $\frac{(\overline{z-w})^{j_2}}{(z-w)^{j_1-1}}$ is infinitely many times differentiable with respect to $w$ in any annulus centered in $z\in\R^2_+$. Then we can apply Green's formula \rf{eqGreen} and use the decay at infinity of the integrand and  \rf{eqCancellation} to see that for $\varepsilon>0$ small enough
\begin{align*}
\int_{\R^2_+\setminus B(z,\varepsilon)}\frac{(\overline{z-w})^{j_1-3}}{(z-w)^{j_1}}\, dm(w)
	&=c_{j_1} \int_\R\frac{(\overline{z-w})^{j_1-3}}{(z-w)^{j_1-1}}\, d\overline w =c_{j_1} \int_\R\frac{(\overline{z-w})^{j_1-3}}{(z-w)^{j_1-1}}\, dw\\
	& =c_{j_1} \int_{\R^2_+\setminus B(z,\varepsilon)}\frac{(\overline{z-w})^{j_1-4}}{(z-w)^{j_1-1}}\, dm(w).
\end{align*}
When $j_1=3$ the last constant is zero. By induction, all these integrals equal zero.

Now we assume that $n\geq2$. Consider a given $\rho_{ext}>0$. We define the interval $I:=[-\rho_{ext},\rho_{ext}]$, the {\em exterior window}  $\mathcal{Q}_{ext}:=Q(0, \rho_{ext})$, and the {\em interior window} $\mathcal{Q}_{int}:=Q(0, \rho_{int})$. Note that \rf{eqBoundCoefficients} implies that for $\rho_{ext}$ small enough, the set $\{x+i\,P(x): x\in I\}\subset \mathcal{Q}_{ext}$, that is, the boundary $\partial\Omega_P$, intersects the vertical sides of the window $\mathcal{Q}_{ext}$ but does not intersect the horizontal ones. The same can be said for the sides of $\mathcal{Q}_{int}$ (see Figure \ref{figIxI}).

Fix $z\in\mathcal{Q}_{int}$ and $\varepsilon<\dist(z, \partial\Omega)$. Splitting the domain of integration in two regions we get
\begin{equation}\label{eqIntegral2Terms}
 \int_{\Omega_P\setminus B(z,\varepsilon)} \frac{(\overline{z-w})^{j_2}}{(z-w)^{j_1}}\, dm(w)= \int_{\Omega_P\setminus \mathcal{Q}_{ext}} \frac{(\overline{z-w})^{j_2}}{(z-w)^{j_1}}\, dm(w)+ \int_{\Omega_P\cap \mathcal{Q}_{ext}\setminus B(z,\varepsilon)} \frac{(\overline{z-w})^{j_2}}{(z-w)^{j_1}}\, dm(w).
\end{equation} 
We bound the non-local part trivially by taking absolute values and using polar coordinates. Choosing $\rho_{int}<\rho_{ext}/2$, we have that
\begin{equation}\label{eqIntegralNonLocal}
 \int_{\Omega_P\setminus \mathcal{Q}_{ext}} \frac{1}{|z-w|^{j_1-j_2}}\, dm(w)\leq \int_{\frac{\rho_{ext}}{2}}^\infty\frac{1}{r^{j_1-j_2}}\int_0^1\,dm_1\,2\pi r\,dr  = \frac{2\pi}{j_1-j_2-2} \frac{2^{j_1-j_2-2}}{(\rho_{ext})^{j_1-j_2-2}},
\end{equation}
where $dm_1$ stands for the Lebesgue length measure. Note that $j_1-j_2-2=n$.

To bound the local part, we can apply Green's Theorem again and we get
\begin{align}\label{eqReduceToBoundary}
\frac{2(j_1-1)}{i} \int_{\Omega_P\cap \mathcal{Q}_{ext}\setminus B(z,\varepsilon)} \frac{(\overline{z-w})^{j_2}}{(z-w)^{j_1}}\, dm(w)
\nonumber	&=  \int_{|z-w|=\varepsilon} \frac{(\overline{z-w})^{j_2}}{(z-w)^{j_1-1}}\, d\overline{w}\\
\nonumber	& \quad +  \int_{\Omega_P\cap\partial\mathcal{Q}_{ext}} \frac{(\overline{z-w})^{j_2}}{(z-w)^{j_1-1}}\, d\overline{w}\\
			& \quad -  \int_{\partial\Omega_P\cap\mathcal{Q}_{ext}} \frac{(\overline{z-w})^{j_2}}{(z-w)^{j_1-1}}\, d\overline{w}.
\end{align}

The first term in the right-hand side of \rf{eqReduceToBoundary} is zero arguing as in \rf{eqCancellation}.
For the second term we note that $z\in\mathcal{Q}_{int}$, and every $w$ in the integration domain is in $\partial \mathcal{Q}_{ext}$, so $|z-w|> \rho_{ext}-\rho_{int}$. Thus, 
\begin{equation}\label{eqIntegralBoundaryQ}
 \int_{\Omega_P\cap\partial\mathcal{Q}_{ext}} \frac{1}{|z-w|^{j_1-j_2-1}}\, d\overline{w}
 	\leq \frac{1}{|\rho_{ext}-\rho_{int}|^{j_1-j_2-1}}6\rho_{ext}.
\end{equation}
Summing up, by \rf{eqIntegral2Terms}, \rf{eqIntegralNonLocal}, \rf{eqReduceToBoundary} and \rf{eqIntegralBoundaryQ}, since $\rho_{int}<\frac{\rho_{ext}}2$,  we get that
\begin{equation}\label{eqIntegralReduced}
\left| \int_{\Omega_P\setminus B(z,\varepsilon)} \frac{(\overline{z-w})^{j_2}}{(z-w)^{j_1}}\, dm(w)\right| 
	\leq \left|\int_{\partial\Omega_P\cap\mathcal{Q}_{ext}} \frac{(\overline{z-w})^{j_2}}{(z-w)^{j_1-1}}\, d\overline{w}\right| + \frac{C_n}{\rho_{ext}^{n}},
\end{equation} 
with $C_n$ depending only on $n$.

It remains to bound the first term in the right-hand side of \rf{eqIntegralReduced}. We begin by using the change of coordinates $w=x+i\,P(x)$ to get a real variable integral:
\begin{equation}\label{eqChangeOfVariables}
\int_{\partial\Omega_P\cap\mathcal{Q}_{ext}} \frac{(\overline{z-w})^{j_2}}{(z-w)^{j_1-1}}\, d\overline{w}=\int_I\frac{(\overline{z}-(x-i\,P(x)))^{j_2}}{(z-(x+i\,P(x)))^{j_1-1}}\,(1-i\,P'(x))\, dx .
\end{equation}
Note that the denominator on the right-hand side never vanishes because $z\notin\partial\Omega_P$. 
Now we take a closer look to the fraction in order to take as much advantage of cancellation as we can, namely
\begin{align}\label{eqChangeOfFractions}
\frac{(\overline{z}-(x-i\,P(x)))^{j_2}}{(z-(x+i\,P(x)))^{j_1-1}}
\nonumber	&=\frac{\Big((\overline{z}-z+2i\,P(x))+(z-(x+i\,P(x)))\Big)^{j_2}}{(z-(x+i\,P(x)))^{j_1-1}}\\
\nonumber	&=\sum_{j=0}^{j_2}{j_2 \choose j}(\overline{z}-z+2i\,P(x))^j(z-(x+i\,P(x)))^{j_2-j-j_1+1}\\
			&=\sum_{j=0}^{j_2}{j_2 \choose j}\frac{(-2 i\,\imag(z)+2i\,P(x))^j}{(z-(x+i\,P(x)))^{n+1+j}}.
\end{align}


Next, we complexify the right-hand side of \rf{eqChangeOfFractions} so that we have a holomorphic function in a certain neighborhood of $I$ to be able to change the  integration path. To do this change we need a key observation. If $\tau\in\mathcal{Q}_{ext}$, then $|\tau|<\sqrt{2}\rho_{ext}$ and by \rf{eqBoundCoefficients}  writing $\widetilde{\delta}=3^n\delta$ we have that
\begin{align}\label{eqBoundPolyDerivative}
|P'(\tau)|
	& \leq |a_1|+2|a_2||\tau|+ \cdots \leq \widetilde{\delta}\left( \frac{ \rho_{int}}{R} + \frac{2}{ R} 2\rho_{ext}+ \frac{3}{ R^2} (2\rho_{ext})^2+\cdots\right) <1/2
\end{align}
if $\rho_{ext}$ is small enough. Thus, we have that $\real(1+i\,P'(\tau))>\frac12$ in $\mathcal{Q}_{ext}$ and, by the Complex Rolle Theorem \ref{theoRolle},  we can conclude that $\tau \mapsto \tau+i\,P(\tau)$ is injective in $\mathcal{Q}_{ext}$. In particular, $z-(\tau+i\,P(\tau))$ has one zero at most in $\mathcal{Q}_{ext}$, and this zero is not real because $z\notin\partial\Omega_P$. Therefore, since the real line divides $\mathcal{Q}_{ext}$ in two congruent open rectangles, there is one of them whose closure has a neighborhood containing no zeros of this function. We call this open rectangle  $\mathcal{R}$. Now,  for any $j\geq 0$ we have that $\tau\mapsto \frac{(P(\tau)-\imag(z))^j}{(z-(\tau+i\,P(\tau)))^{n+1+j}}\,(1-i\,P'(\tau))$ is holomorphic in $\mathcal{R}$, so we can change the path of integration and get
\begin{equation}\label{eqChangePath}
\int_I\frac{2^j(P(x)-\imag(z))^j}{(z-(x+i\,P(x)))^{n+1+j}}\,(1-i\,P'(x))\, dx =- \int_{\partial\mathcal{R}\setminus I}\frac{2^j(P(\tau)- \imag(z))^j}{(z-(\tau+i\,P(\tau)))^{n+1+j}}\,(1-i\,P'(\tau))\, d\tau. 
\end{equation} 

On the other hand, if $|\tau|<\sqrt2\rho_{ext}$, then we have that
\begin{align}\label{eqBoundPolyItself}
|P(\tau)|
			& \leq |a_0|+|a_1||\tau|+|a_2|| \tau |^2+|a_3|| \tau |^3+\cdots\\
\nonumber	& \leq \widetilde{\delta}\left( \frac{\rho_{int}^2}{R} + \frac{\rho_{int}}{R}\, 2\rho_{ext} + \frac{1}{R} (2\rho_{ext})^2+\frac{1}{R^{2}} (2\rho_{ext})^3+\cdots\right)\leq \rho_{ext}^{3/2}
\end{align}
for $\rho_{ext}$ small enough. Then, taking absolute values inside the last integral in \rf{eqChangePath} and using \rf{eqBoundPolyDerivative} and \rf{eqBoundPolyItself} we get
\begin{equation}\label{eqReduceToNothing}
\int_{\partial\mathcal{R}\setminus I}\frac{2^j|P(\tau)-\imag(z)|^{j}}{|z-(\tau+i\,P(\tau))|^{n+1+j}}\,|1-i\,P'(\tau)|\, |d\tau|
\leq \frac{3}{2} \int_{\partial\mathcal{R}\setminus I}\frac{2^j(\rho_{ext}^{3/2}+\rho_{int})^{j}}{|z-(\tau+i\,P(\tau))|^{n+1+j}}\, |d\tau|. 
\end{equation} 

Finally, for any $\tau\in\partial R\setminus I\subset \partial\mathcal{Q}_{ext}$ and $\rho_{ext}$ small enough, we have that 
\begin{equation*}
|z-(\tau+i\,P(\tau))| \geq |\tau|-|z|-|P(\tau)| \geq \rho_{ext} - \sqrt{2}\rho_{int} - \rho_{ext}^\frac32 \geq \frac{\rho_{ext}}{2}-2 \rho_{int}.
\end{equation*}
Using this fact we rewrite \rf{eqReduceToNothing} as
\begin{equation}\label{eqReduceToNothingButNothing}
\int_{\partial\mathcal{R}\setminus I}\frac{2^j |P(\tau)-\imag(z)|^{j}}{|z-(\tau+i\,P(\tau))|^{n+1+j}}\,|1-i\,P'(\tau)|\, |d\tau|
	\leq\frac{3}{2}\frac{2^j(\rho_{ext}^{3/2}+\rho_{int})^{j}}{(\rho_{ext}/2-2\rho_{int})^{n+1+j}} \int_{\partial\mathcal{R}\setminus I}\, |d\tau|. 
\end{equation} 

Putting together \rf{eqChangeOfVariables}, \rf{eqChangeOfFractions}, \rf{eqChangePath} and \rf{eqReduceToNothingButNothing} we can write
\begin{align}
\left|\int_{\partial\Omega_P\cap\mathcal{Q}_{ext}} \frac{(\overline{z-w})^{j_2}}{(z-w)^{j_1-1}}\, d\overline{w}\right|
\nonumber	& \leq \frac{3}{2\, (\rho_{ext}/2-2\rho_{int})^{n+1}}\sum_{j=0}^{j_2} \left(2\cdot\frac{\rho_{ext}^{3/2}+\rho_{int}}{\rho_{ext}/2-2\rho_{int}}\right)^j {j_2 \choose j} \, 4\rho_{ext}\\
\nonumber	& =\frac{6\rho_{ext}}{(\rho_{ext}/2-2\rho_{int})^{n+1}}\left(1+2 \cdot \frac{\rho_{ext}^{3/2}+\rho_{int}}{\rho_{ext}/2-2\rho_{int}}\right)^{j_2}, 
\end{align}
and, choosing $\rho_{int} = \min\{\rho_{ext}/8, \rho_{ext}^{3/2}\}$, 
\begin{equation}\label{eqTakeAbsoluteBound}
\left|\int_{\partial\Omega_P\cap\mathcal{Q}_{ext}} \frac{(\overline{z-w})^{j_2}}{(z-w)^{j_1-1}}\, d\overline{w}\right|
	 \leq \frac{C_n}{\rho_{ext}^{n}}\left(1+16\rho_{ext}^{1/2}\right)^{j_2}, 
\end{equation}
where the constant $C_n$ depends only on $n$.

Now, \rf{eqIntegralReduced} together with \rf{eqTakeAbsoluteBound} prove \rf{eqPolyIntegralBounded}.
\end{proof}

\begin{remark}
Note that we have assumed $\gamma_2\geq 0$ in the proof of Theorem \ref{mtheorem}. When proving the case $\gamma_2\leq 0$, we would have to prove Proposition \ref{propoPoly} with $\gamma \in \{(j_1,-j_2): j_1,j_2\in \N_0 \mbox{ and }  j_2-j_1=n+2\}$. The proof is analogous to the one shown above with slight modifications, and it is left to the reader to complete the details.
\end{remark}

\subsection{Bounded domains: a localization principle}\label{secLocalization}

In this section we use a standard localization procedure to deduce the following result from Theorem \ref{mtheorem}.

\begin{theorem}\label{theoGeometric}
Let  $n \in\N$, $1<p<\infty$, let $\delta,R>0$ and  let $\Omega$ be a bounded $(\delta,R)$-$C^{n-1,1}$ domain with parameterizations in $B^{n+1-1/p}_{p,p}$. Then, for any $\gamma \in \Z^2\setminus \{(-1,-1)\}$ with $\gamma_1+\gamma_2=-2$, we have that $T^\gamma \chi_\Omega \in W^{n,p}(\Omega)$ and, in particular, for any $\epsilon>0$, we have that 
\begin{equation}\label{eqBoundNormTGamma}
\norm{\nabla^n T^\gamma \chi_\Omega}_{L^p(\Omega)}^p\lesssim C_\epsilon |\gamma|^{np} \left(\norm{N}_{B^{n-1/p}_{p,p}(\partial\Omega)}^p+  (1+\epsilon)^{|\gamma|p}\right),
\end{equation}
where $C_\epsilon$ depends on $n$, $p$, $\delta$, $R$, the length of the boundary $\mathcal{H}^1(\partial\Omega)$ and $\epsilon$ but not on $|\gamma|$. 
\end{theorem}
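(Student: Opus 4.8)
The plan is to obtain this from Theorem~\ref{mtheorem} and Lemma~\ref{lemNormBeta} by differentiating under the operator and then localizing to the model admissible domains. First I would record the differentiation identity \emph{inside} $\Omega$. Since $\partial_z(z-w)^\beta=\beta_1(z-w)^{\beta-(1,0)}$ and $\bar\partial_z(z-w)^\beta=\beta_2(z-w)^{\beta-(0,1)}$ away from the diagonal, and since the distributional $\delta$-type corrections produced when a derivative falls on the diagonal are, for $\gamma_1+\gamma_2=-2$, supported at the origin and homogeneous of degree $-2-|\alpha|$ — hence of the form $\sum_{|\beta|=|\alpha|}d_\beta D^\beta\delta_0$ with $|\beta|\ge1$ when $\alpha\ne0$ — their convolution with $\chi_\Omega$ yields distributions $D^\beta\chi_\Omega$ that vanish against $C_c^\infty(\Omega)$ (indeed $\int_\Omega D^\beta\phi=0$ for $|\beta|\ge1$ and $\phi\in C_c^\infty(\Omega)$, by Green's formula \rf{eqGreen}). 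Hence, for every $\alpha$ with $1\le|\alpha|\le n$,
\begin{equation*}
D^\alpha_\Omega T^\gamma\chi_\Omega=c_{\gamma,\alpha}\,T^{\gamma-\alpha}\chi_\Omega\ \text{ in }\Omega,\qquad c_{\gamma,\alpha}:=\prod_{i=0}^{\alpha_1-1}(\gamma_1-i)\prod_{j=0}^{\alpha_2-1}(\gamma_2-j),\qquad |c_{\gamma,\alpha}|\lesssim_n|\gamma|^{|\alpha|}.
\end{equation*}
As $\gamma\ne(-1,-1)$ and $\gamma_1+\gamma_2=-2$ force one of $\gamma_1,\gamma_2$ to be $\le-2$ and the other $\ge0$, a short case check shows that whenever $c_{\gamma,\alpha}\ne0$ we have $(\gamma-\alpha)_1(\gamma-\alpha)_2\le0$ and $(\gamma-\alpha)_1\ne(\gamma-\alpha)_2$, while $(\gamma-\alpha)_1+(\gamma-\alpha)_2=-2-|\alpha|$; thus $T^{\gamma-\alpha}\chi_\Omega$ is well defined in $\Omega$ and $T^{\gamma-\alpha}$ is precisely of the type handled by Theorem~\ref{mtheorem} with smoothness parameter $m=|\alpha|$. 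So it suffices to bound $\norm{T^{\gamma'}\chi_\Omega}_{L^p(\Omega)}$ for every $\gamma'$ with $\gamma'_1+\gamma'_2=-m-2$, $\gamma'_1\gamma'_2\le0$ and $1\le m\le n$: summing over the $n+1$ multiindices $|\alpha|=n$ then gives \rf{eqBoundNormTGamma}, and running the bound at every order $m\le n$ — legitimate because $B^{n+1-1/p}_{p,p}\subset B^{m+1-1/p}_{p,p}$, so $\Omega$ is also admissible with the lower parameter — together with $T^\gamma\chi_\Omega\in L^p(\C)$ (it is a Calderón--Zygmund operator, being homogeneous of degree $-2$ with vanishing circular mean since $\gamma\ne(-1,-1)$) gives $T^\gamma\chi_\Omega\in W^{n,p}(\Omega)$.

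For the estimate of $\norm{T^{\gamma'}\chi_\Omega}_{L^p(\Omega)}$ I would run a standard localization. Fix $\epsilon>0$ and let $\rho=\rho_\epsilon<R$ be a radius for which the conclusion of Theorem~\ref{mtheorem} holds at all smoothness parameters $m\le n$ simultaneously; we may take $\rho\le R/15$, since the proof of Theorem~\ref{mtheorem} produces such a radius and works verbatim for any smaller one. Take a maximal $\tfrac\rho2$-separated set $\{\zeta_i\}_{i=1}^M\subset\partial\Omega$: then $\{B(\zeta_i,\rho/2)\}_i$ covers $\partial\Omega$, the windows $\mathcal{Q}_i=\tau_{\zeta_i}^{-1}(Q(0,R))$ have overlap bounded by some $K=K(n,p,\delta,R,\epsilon)$, $M\lesssim_{n,p,\delta,R,\mathcal{H}^1(\partial\Omega),\epsilon}1$, and inside each $\mathcal{Q}_i$ the boundary is the graph of a function $A_i\in C^{n-1,1}\cap B^{n+1-1/p}_{p,p}$ satisfying the admissibility bounds; let $\Omega_i$ be the associated admissible model domain. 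Splitting $\Omega=\Omega_{\mathrm{far}}\cup\bigcup_i\big(B(\zeta_i,\rho)\cap\Omega\big)$ with $\Omega_{\mathrm{far}}=\{z\in\Omega:\dist(z,\partial\Omega)>\rho/2\}$, I would estimate: on $\Omega_{\mathrm{far}}$ the disk $B(z,\rho/2)$ lies in $\Omega$ and the vanishing of the annular means of $w^{\gamma'}$ gives $|T^{\gamma'}\chi_\Omega(z)|\le\int_{|w|>\rho/2}|w|^{-m-2}\,dm(w)\lesssim_m\rho^{-m}$, so $\norm{T^{\gamma'}\chi_\Omega}_{L^p(\Omega_{\mathrm{far}})}^p\lesssim_\epsilon1$; on $B(\zeta_i,\rho)\cap\Omega$, since $\Omega$ and $\Omega_i$ agree inside $\mathcal{Q}_i\supset B(\zeta_i,\rho)$ we write $T^{\gamma'}\chi_\Omega=T^{\gamma'}\chi_{\Omega_i}+T^{\gamma'}(\chi_\Omega-\chi_{\Omega_i})$, where $\chi_\Omega-\chi_{\Omega_i}$ is supported at distance $\gtrsim R$ from $B(\zeta_i,\rho)$, so the second term is $\lesssim_m R^{-m}$ there and contributes $\lesssim MR^{-mp}\rho^2\lesssim_\epsilon1$, while for the first term Theorem~\ref{mtheorem} (after the rigid motion $\tau_{\zeta_i}$, with parameter $m$) gives
\begin{equation*}
\norm{T^{\gamma'}\chi_{\Omega_i}}_{L^p(\Omega_i\cap B(\zeta_i,\rho))}^p\lesssim\norm{A_i}_{\dot B^{m+1-1/p}_{p,p}(-5\rho,5\rho)}^p+\rho^{2-mp}(1+\epsilon)^{|\gamma'|p}.
\end{equation*}

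Summing over $i$: since $5\rho\le R/3$, restriction combined with the embedding $B^{n+1-1/p}_{p,p}\subset B^{m+1-1/p}_{p,p}$ gives $\norm{A_i}_{\dot B^{m+1-1/p}_{p,p}(-5\rho,5\rho)}\lesssim\norm{A_i}_{\dot B^{n+1-1/p}_{p,p}(\frac13 I_R)}$ (the $L^p$ part being absorbed by the admissibility bounds), and splitting the $\mathcal{Q}_i$ into $\lesssim K$ subfamilies with pairwise disjoint $\tfrac1{40}\mathcal{Q}$ and applying the right-hand estimate of Lemma~\ref{lemNormBeta} to each gives $\sum_i\norm{A_i}_{\dot B^{n+1-1/p}_{p,p}(\frac13 I_R)}^p\lesssim_K\norm{N}_{B^{n-1/p}_{p,p}(\partial\Omega)}^p$; also $|\gamma'|\le|\gamma|+n$, so $\sum_i\rho^{2-mp}(1+\epsilon)^{|\gamma'|p}\lesssim M\rho^{2-mp}(1+\epsilon)^{np}(1+\epsilon)^{|\gamma|p}\lesssim_\epsilon(1+\epsilon)^{|\gamma|p}$. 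Using $1\le(1+\epsilon)^{|\gamma|p}$ we obtain $\norm{T^{\gamma'}\chi_\Omega}_{L^p(\Omega)}^p\lesssim C_\epsilon\big(\norm{N}_{B^{n-1/p}_{p,p}(\partial\Omega)}^p+(1+\epsilon)^{|\gamma|p}\big)$; plugging this into $\norm{\nabla^nT^\gamma\chi_\Omega}_{L^p(\Omega)}^p\le\sum_{|\alpha|=n}|c_{\gamma,\alpha}|^p\norm{T^{\gamma-\alpha}\chi_\Omega}_{L^p(\Omega)}^p$ and $|c_{\gamma,\alpha}|\lesssim_n|\gamma|^n$ yields \rf{eqBoundNormTGamma}, while the same estimate at each order $m\le n$ supplies the remaining weak derivatives, hence membership in $W^{n,p}(\Omega)$.

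The analytic substance is carried entirely by Theorem~\ref{mtheorem} and Lemma~\ref{lemNormBeta}, which are already available, so the above is bookkeeping; the two points that need care are the scale discrepancy $\rho_\epsilon\ll R$ — which forces covering $\partial\Omega$ at the fine scale $\rho_\epsilon$ while the windows stay at the coarse scale $R$, hence with only bounded (rather than unit) overlap, a loss that is absorbed by the $\ell^p$-additivity of the $\beta$-energies behind Lemma~\ref{lemNormBeta} — and the need to keep the whole argument uniform over all smoothness parameters $m\le n$ in order to control the intermediate derivatives and conclude $T^\gamma\chi_\Omega\in W^{n,p}(\Omega)$.
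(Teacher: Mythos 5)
Your proposal is correct and follows essentially the same route as the paper: reduce $D^\alpha T^\gamma\chi_\Omega$ to $c_{\gamma,\alpha}T^{\gamma-\alpha}\chi_\Omega$ inside $\Omega$ (the paper's Lemma \ref{remDiffCharacteristic}, which it proves via Green's formula and differentiation of a boundary integral rather than via the homogeneous-distribution/$D^\beta\delta_0$ argument you use, though both yield the same identity since the diagonal corrections vanish on the open set $\Omega$), then localize to the admissible model domains, apply Theorem \ref{mtheorem} there, bound the far region and the $\chi_\Omega-\chi_{\Omega_i}$ error crudely, and sum with Lemma \ref{lemNormBeta}. Your explicit treatment of the intermediate orders $m\le n$ and of the bounded overlap of the $R$-windows is, if anything, slightly more careful than the paper's.
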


Note that the result above implies Theorem \ref{theoGeometricVeryNaive} as a particular case. 

Along this section, we consider $n \in\N$, $1<p<\infty$, $\delta>0$, $R>0$ to be fixed.  Let $\Omega$  be a $(\delta,R)$-$C^{n-1,1}$ domain. To show that it satisfies \rf{eqBoundNormTGamma} we will find bounds for $\norm{D^\alpha T^\gamma \chi_\Omega}_{L^p(\Omega)}$ below, where  $\alpha \in \N^2_0$ with $|\alpha|=n$. First of all, we need to find out who are the derivatives of $T^\gamma\chi_\Omega$ that we want to estimate. This is particularly important since, in order to use Theorem \ref{mtheorem}, we will substitute $\Omega$ by admissible domains $\widetilde{\Omega}$, which  are unbounded and, therefore, $T^\gamma \chi_{\widetilde{\Omega}}$ is not well-defined for those domains when $\gamma_1+\gamma_2=-2$. We could avoid this problem by defining $T^\gamma$ in BMO, but we will skip those technicalities and substitute $D^\alpha T^\gamma$ by $T^{\gamma-\alpha}$ as our next lemma shows.

\begin{figure}[ht]
 \centering
 {\includegraphics[width=0.7\textwidth]{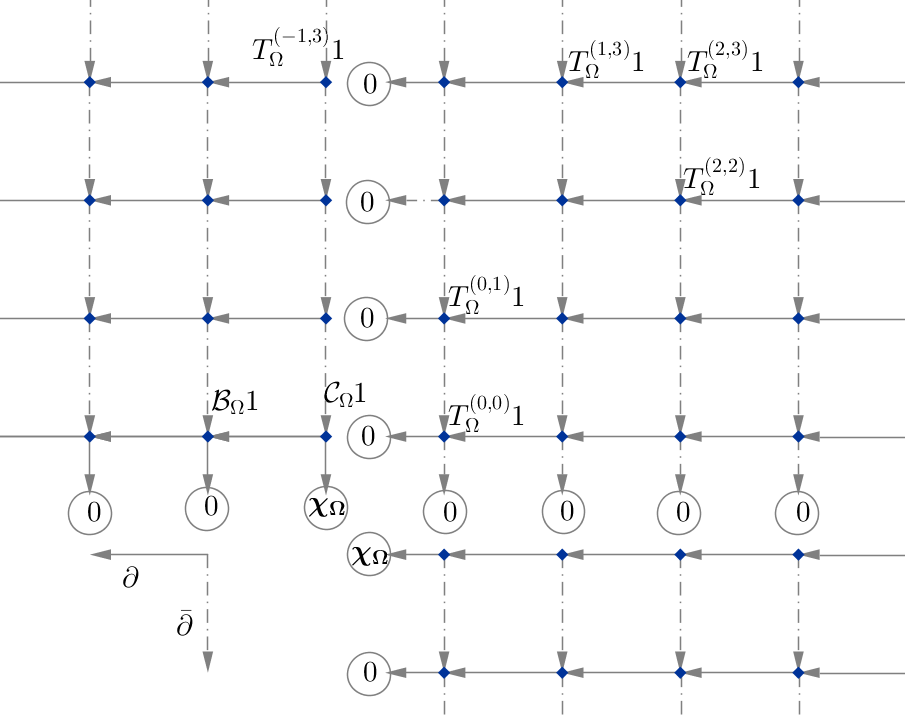}}    
  \caption{This diagram represents $T^{\gamma}_\Omega 1$ (with $\gamma_1$ represented in the horizontal axis and $\gamma_2$ in the vertical one) and the relations found in Lemma \ref{remDiffCharacteristic} between them via weak-derivation in $\Omega$, $\partial$-derivation follows  right-to-left direction, $\overline\partial$-derivation follows top-to-bottom direction. For instance, $D^{(1,3)}T^{(-1,3)}_\Omega 1=\partial \bar{\partial}^3 T^{(-1,3)}_\Omega 1=c\,\Beurling _\Omega 1$. Note that $T^{(0,j)}_\Omega 1$ for $j\neq -1$ are anti-analytic, while  $T^{(j,0)}_\Omega 1$ for $j\neq -1$ are analytic.}\label{figDerivatives}
\end{figure}

\begin{lemma}\label{remDiffCharacteristic}
Consider a bounded $(\delta,R)-C^{(n-1,1)}$ domain $\Omega$ and let us fix $\gamma\in \Z^2$ with either $\gamma_1\geq0$ or $\gamma_2\geq 0$, and $\alpha\in \N_0^2$ with modulus $|\alpha|=n$. Then for $z\in\Omega$ we have
 \begin{equation*}
 D^\alpha T^\gamma_\Omega 1 (z) =
 	\begin{cases} 
		C_n \chi_\Omega (z) & \mbox{if } \gamma=(n-1,-1) \mbox{ and } \alpha=(n,0) \\
		 & \quad \mbox{or } \gamma=(-1,n-1) \mbox{ and } \alpha=(0,n), \\
		0 & \mbox{if } \alpha_1>\gamma_1\geq0 \mbox{ or }\alpha_2>\gamma_2\geq 0 \mbox{ except in the previous case,}\\
		C_{\gamma,\alpha} T^{\gamma-\alpha}_\Omega 1(z) & \mbox{otherwise},
	 \end{cases} 
\end{equation*}
where $D^\alpha$ stands for the weak derivative in $\Omega$ (see Figure \ref{figDerivatives}). The constants satisfy $|C_{\gamma,\alpha}|\lesssim (|\gamma|+n)^n$ and $|C_n|\lesssim n!$.
\end{lemma}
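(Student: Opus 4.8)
The plan is to prove the lemma by induction on $n=|\alpha|$, reducing everything to the first-order case. Since $D^\alpha=\partial^{\alpha_1}\bar\partial^{\alpha_2}$ and mixed partials commute (also for distributions), it suffices to understand, for $z\in\Omega$, the single derivatives $\partial\,T^\gamma\chi_\Omega$ and $\bar\partial\,T^\gamma\chi_\Omega$ as weak derivatives in $\Omega$.

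First I would establish the \emph{first-order rules}, valid whenever $\gamma_1\geq0$ or $\gamma_2\geq0$:
\begin{equation*}
\partial\,T^\gamma\chi_\Omega=\begin{cases}\gamma_1\,T^{\gamma-(1,0)}\chi_\Omega & \text{if }\gamma_1\neq0,\\ \pi\,\chi_\Omega & \text{if }\gamma=(0,-1),\\ 0 & \text{if }\gamma_1=0\ \text{and}\ \gamma_2\neq-1,\end{cases}
\end{equation*}
together with the mirror statement for $\bar\partial$ (exchange $\gamma_1\leftrightarrow\gamma_2$ and $(0,-1)\leftrightarrow(-1,0)$). To prove this I would view $T^\gamma\chi_\Omega$ on $\Omega$ as the restriction of $K^\gamma*\chi_\Omega$ with $K^\gamma(z)=z^{\gamma_1}\bar z^{\gamma_2}$ (a principal-value distribution when $\gamma_1+\gamma_2\leq-2$), so that $\partial(K^\gamma*\chi_\Omega)=(\partial K^\gamma)*\chi_\Omega$, and compute $\partial K^\gamma$. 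Away from the origin it equals $\gamma_1 z^{\gamma_1-1}\bar z^{\gamma_2}$; the only correction supported at the origin comes from $\partial$ hitting a negative power of $\bar z$, because $\partial(1/\bar z)=\pi\delta_0$ and hence $\partial(\bar z^{-k})=\frac{(-1)^{k-1}\pi}{(k-1)!}\,\bar\partial^{k-1}\delta_0$, while the factor $z^{\gamma_1}$ with $\gamma_1\geq1$ annihilates $\bar\partial$-derivatives of $\delta_0$. Convolving with $\chi_\Omega$, a term $c\,\delta_0$ produces $c\,\chi_\Omega$, whereas a genuine derivative $\partial^a\bar\partial^b\delta_0$ with $a+b\geq1$ produces $\partial^a\bar\partial^b\chi_\Omega$, which vanishes on the open set $\Omega$; tracking which configuration leaves a surviving pure $\delta_0$ shows this occurs exactly for $\gamma=(0,-1)$, whence the three cases. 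One hands-on way to make all this rigorous is to test against $\phi\in C^\infty_c(\Omega)$, interchange integrals using the $\varepsilon$-cutoff of \rf{eqTgamma}, integrate by parts in $z$, and observe by \rf{eqCancellation} that the circular boundary term $\oint_{|z-w|=\varepsilon}$ vanishes in the limit except in the single rotation-invariant configuration, where it contributes $\pi\phi(w)$ and hence $\pi\chi_\Omega$ after integrating in $w$. For $\gamma_1=0$, $\gamma_2\leq-2$ one can alternatively use $T^{(0,\gamma_2)}\chi_\Omega=\overline{T^{(\gamma_2,0)}\chi_\Omega}$ (as $\chi_\Omega$ is real) and Green's formula \rf{eqGreen} (with \rf{eqCancellation} killing the diagonal term) to write $T^{(\gamma_2,0)}\chi_\Omega$ as a Cauchy-type integral over $\partial\Omega$, hence holomorphic in $\Omega$, so $\partial$ of its conjugate vanishes there; and $\partial\,T^{(0,-1)}\chi_\Omega=\pi\,\overline{\bar\partial\Cauchy\chi_\Omega}=\pi\chi_\Omega$ is the fundamental solution identity for $\bar\partial$.

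For the induction step, given $\alpha$ with $|\alpha|=n$ I would peel off derivatives one factor at a time, performing all $\alpha_1$ copies of $\partial$ first when $\gamma_1\geq0$ (and all $\alpha_2$ copies of $\bar\partial$ first when $\gamma_1<0$, in which case $\gamma_2\geq0$); one checks that each intermediate exponent $\gamma-(1,0)$, $\gamma-(0,1)$ still satisfies ``$\gamma_1\geq0$ or $\gamma_2\geq0$'', so the rules keep applying, and that once a $\chi_\Omega$ appears any further $\partial$ or $\bar\partial$ kills it on $\Omega$. The chain then ends in exactly one of three ways. (a) It produces $T^{(0,-1)}_\Omega1$ (resp.\ $T^{(-1,0)}_\Omega1$) with a single $\partial$ (resp.\ $\bar\partial$) left and no derivatives of the other kind, which forces $\gamma=(n-1,-1),\ \alpha=(n,0)$ (resp.\ $\gamma=(-1,n-1),\ \alpha=(0,n)$); applying it gives $(n-1)!\,\pi\,\chi_\Omega$, i.e. $C_n=(n-1)!\,\pi$. (b) It hits a vanishing first-order rule or a further derivative of an already produced $\chi_\Omega$; this is precisely the range $\alpha_1>\gamma_1\geq0$ or $\alpha_2>\gamma_2\geq0$ outside the configurations of (a), and the result is $0$. (c) Otherwise every factor contributes a nonzero scalar and
\begin{equation*}
D^\alpha T^\gamma_\Omega1=\Big(\prod_{j=0}^{\alpha_1-1}(\gamma_1-j)\Big)\Big(\prod_{j=0}^{\alpha_2-1}(\gamma_2-j)\Big)\,T^{\gamma-\alpha}_\Omega1=C_{\gamma,\alpha}\,T^{\gamma-\alpha}_\Omega1 .
\end{equation*}
The bounds follow at once: $|C_{\gamma,\alpha}|$ is a product of $n$ integers each of modulus at most $\max(|\gamma_1|,|\gamma_2|)+n\leq|\gamma|+n$, so $|C_{\gamma,\alpha}|\lesssim(|\gamma|+n)^n$, and $|C_n|=(n-1)!\,\pi\lesssim n!$.

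I expect the main obstacle to be the first-order step: differentiating the (typically non-locally-integrable) kernel rigorously and, above all, pinning down exactly when it picks up an honest Dirac mass at the diagonal rather than a pure derivative of one, since only the former survives restriction to the open set $\Omega$. Because the polynomial factors $z^{\gamma_1}$, $\bar z^{\gamma_2}$ can lower the order of a $\delta_0$-derivative (for instance $z\,\partial\delta_0=-\delta_0$), one cannot conclude by merely counting derivatives; organizing the computation with a fixed differentiation order (``all $\partial$'s first'') is what makes the accounting transparent and isolates the single exceptional configuration.
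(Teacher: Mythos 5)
Your argument is correct, but it runs along a genuinely different track from the paper's. You differentiate the convolution kernel $K^\gamma=z^{\gamma_1}\bar z^{\gamma_2}$ in the sense of distributions, iterate first-order rules, and track when the derivative picks up an honest Dirac mass at the origin (via $\partial(1/\bar z)=\pi\delta_0$ and its conjugate) versus a derivative of $\delta_0$, which after convolution with $\chi_\Omega$ is supported on $\partial\Omega$ and hence dies on the open set $\Omega$. The paper instead applies Green's formula once to rewrite $T^\gamma_\Omega 1(z)=\frac{i}{2(\gamma_2+1)}\int_{\partial\Omega}(z-w)^{\gamma+(0,1)}\,dw$ and then differentiates $n$ times under this boundary integral, where the integrand is smooth in $z\in\Omega$; the exceptional case appears there as the Cauchy integral $\int_{\partial\Omega}(z-w)^{-1}\,dw$, which is constant on $\Omega$, and the vanishing cases come from applying Green's formula backwards to a holomorphic integrand. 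The paper's route makes ``differentiate under the integral sign'' trivially rigorous and handles all $n$ derivatives in one stroke, at the price of a slightly opaque case split on $\gamma_2-\alpha_2$; your route isolates the mechanism behind the exceptional configuration more conceptually and yields the constant $C_{\gamma,\alpha}$ explicitly as a product of falling factorials (which matches the paper's $(-1)^{\alpha_1}\frac{\gamma_2!}{(\gamma_2-\alpha_2)!}\frac{(-\gamma_1+\alpha_1-1)!}{(-\gamma_1-1)!}$), but it puts all the weight on the first-order step for principal-value kernels of arbitrarily negative homogeneity. You correctly identify this as the delicate point, and the $\varepsilon$-truncation argument you sketch (integrate by parts against $\phi\in C^\infty_c(\Omega)$ and check via \rf{eqCancellation} that the circle term $\oint_{|z-w|=\varepsilon}$ survives only for $\gamma=(0,-1)$, resp.\ $(-1,0)$) does close it; your observation that $z^{\gamma_1}$ with $\gamma_1\geq 1$ annihilates $\bar\partial^k\delta_0$ is also what guarantees no spurious local terms in the generic case. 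Your bookkeeping of intermediate exponents (one coordinate stays nonnegative throughout, so every intermediate principal value exists) and the resulting bounds $|C_{\gamma,\alpha}|\leq(|\gamma|+n)^n$, $|C_n|=(n-1)!\,\pi$ agree with the statement.
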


\begin{proof}
Let us assume that $\gamma_2\geq 0$. If $\gamma_1\geq 0$ as well, differentiating a polynomial under the integral sign makes the proof trivial, so we assume $\gamma_1\leq -1$. Recall that we write $w^\gamma=w^{\gamma_1}\overline{w}^{\gamma_2}$. For every $z\in\Omega$ choose $\varepsilon_z:=\dist(z,\partial\Omega)/2$. By \rf{eqCancellationGreen}, Green's formula and \rf{eqCancellation} we get that
\begin{equation}\label{eqGreenToTGamma}
T^\gamma  _\Omega 1(z)=\int_{\Omega\setminus B(z,\varepsilon_z)} (z-w)^{\gamma} \, dm(w)=\frac{i}{2(\gamma_2+1)} \int_{\partial\Omega} (z-w)^{\gamma+(0,1)} \,dw,
\end{equation}
and we can differentiate under the integral sign.

If $\gamma_2\geq\alpha_2$, then we have
\begin{align*}
D^\alpha T^\gamma  _\Omega 1(z)
	& = \frac{i}{2(\gamma_2+1)} (-1)^{\alpha_1}\frac{(\gamma_2+1)!}{(\gamma_2-\alpha_2+1)!}\frac{(-\gamma_1+\alpha_1-1)!}{(-\gamma_1-1)!}\int_{\partial\Omega} (z-w)^{\gamma-\alpha+(0,1)} \,dw.	
\end{align*}
Since $\gamma_2- \alpha_2\geq 0$ and $\gamma_1-\alpha_1<0$, we can apply \rf{eqGreenToTGamma} to  $\gamma-\alpha$ instead of $\gamma$ and, thus,
\begin{equation*}
D^\alpha T^\gamma  _\Omega 1(z)
= (-1)^{\alpha_1}\frac{(\gamma_2)!}{(\gamma_2-\alpha_2)!}\frac{(-\gamma_1+\alpha_1-1)!}{(-\gamma_1-1)!}T^{\gamma-\alpha}_\Omega 1(z).
\end{equation*}

If $\gamma_2+1=\alpha_2$ we must pay special attention. In that case differentiating under the integral sign in \rf{eqGreenToTGamma} we get
\begin{align*}
D^\alpha T^\gamma  _\Omega 1(z)
	& = \frac{i}{2} (-1)^{\alpha_1}\frac{(\gamma_2)!}{(\gamma_2-\alpha_2+1)!}\frac{(-\gamma_1+\alpha_1-1)!}{(-\gamma_1-1)!}\int_{\partial\Omega} (z-w)^{\gamma-\alpha+(0,1)} \,dw\\
	&=C_{\gamma,\alpha}\int_{\partial\Omega}\frac{1}{(z-w)^{-\gamma_1+\alpha_1}} \,dw,		
\end{align*}
where $|C_{\gamma,\alpha}|\leq (|\gamma|+n)^n$.
If, moreover, $\gamma_1-\alpha_1\leq -2$, we can use \rf{eqCancellation} and Green's Theorem to write
\begin{equation}\label{eqBorderLineDerivation}
D^\alpha T^\gamma  _\Omega 1(z)=C_{\gamma,\alpha}\int_{\partial\Omega\cup \partial B(0,\varepsilon_z)}\frac{1}{(z-w)^{-\gamma_1+\alpha_1}} \,dw=C_{\gamma,\alpha}\int_{\Omega\setminus \partial B(0,\varepsilon_z)} 0 \,dm(w)=0.
\end{equation} 
Otherwise, that is, if $\gamma_2+1=\alpha_2$ and $\gamma_1-\alpha_1=-1$, then $\alpha=(0,n)$ and $\gamma=(-1,n-1)$. This implies that
\begin{equation}\label{eqBorderLineDerivationExtra}
D^\alpha T^\gamma  _\Omega 1(z)=C_n \int_{\partial\Omega }\frac{1}{(z-w)} \,dw=C_n \chi_\Omega(z),
\end{equation} 
with $|C_n|\lesssim (n-1)!$.
Let us remark  the fact that $\gamma=(-1,0)$ together with $\alpha=(0,1)$ is the case of the $\bar\partial$-derivative of the Cauchy transform, which is the identity.

Finally, if $\gamma_2<\alpha_2-1$, then differentiating \rf{eqBorderLineDerivation} or \rf{eqBorderLineDerivationExtra} we get
\begin{equation*}
D^\alpha T^\gamma  _\Omega 1(z)=0.
\end{equation*}
One can argue analogously if $\gamma_1\geq0$.
\end{proof}

\begin{figure}[ht]
 \centering
 {\includegraphics[width=0.8\textwidth]{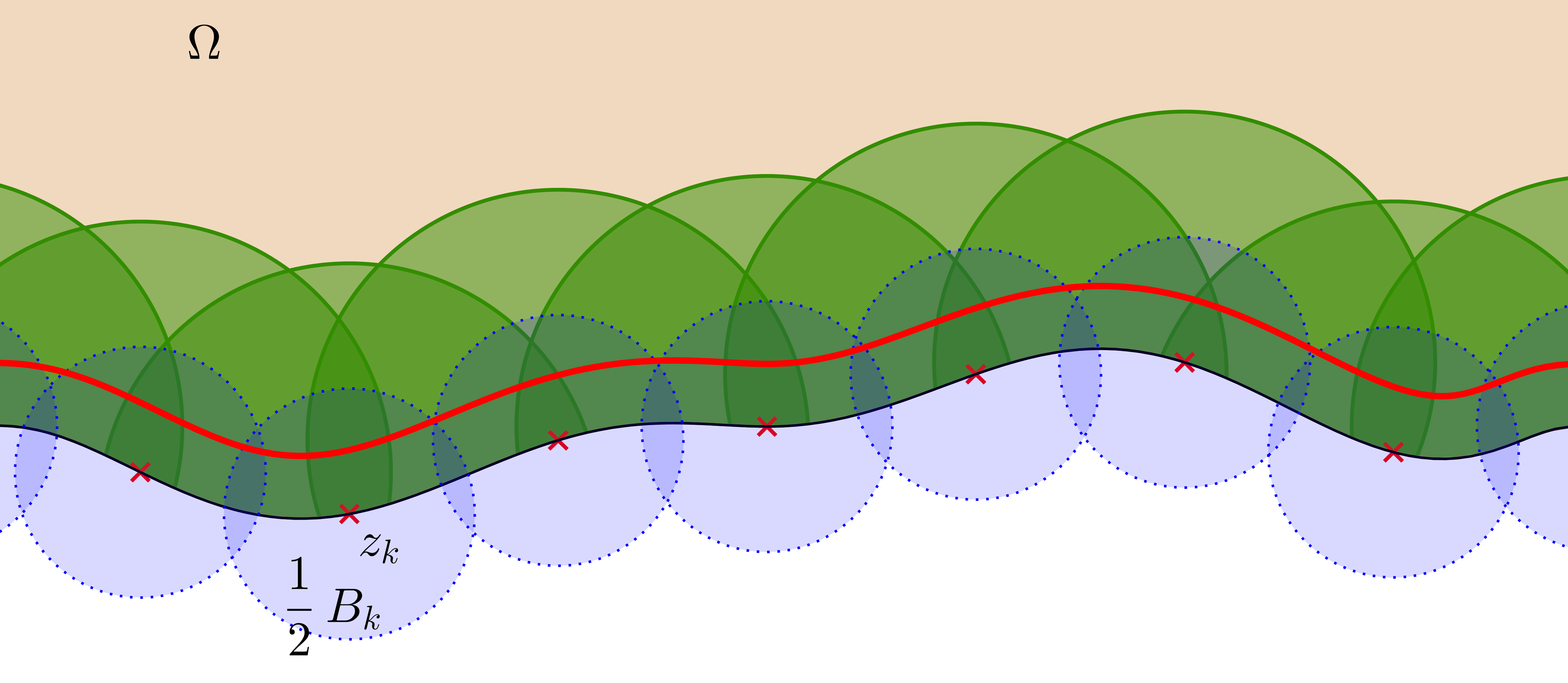}}    
  \caption{Decomposition of $\Omega$ in the proof of Theorem \ref{theoGeometric}. The inner region $\widetilde{\Omega}_0$ appears in the figure above the  bold red line, while the family $\{B_k\cap \Omega\}$ appear in green.}\label{figFinalDecomposition}
\end{figure}

\begin{proof}[Proof of Theorem \ref{theoGeometric}]
Let $\Omega$  be a $(\delta,R)$-$C^{n-1,1}$ domain and let  $\gamma \in \Z^2\setminus \{(-1,-1)\}$ with $\gamma_1+\gamma_2=-2$ and $\alpha \in \N^2$ with $|\alpha|=n$. By Lemma \ref{remDiffCharacteristic}, if $\gamma-\alpha$ has two negative coordinates,  $D^\alpha T^\gamma _\Omega 1$ agrees with a constant  (either null or not bounded by $C_n$) on $\Omega$ and, thus, \rf{eqBoundNormTGamma} follows.

Therefore, we can assume that 
\begin{equation}\label{eqSwitchDerivatives}
D^\alpha T^\gamma _\Omega 1=C_{\gamma,\alpha}T^{\nu} _\Omega 1,
\end{equation}
 with $|C_{\gamma,\alpha}|\lesssim (|\gamma|+n)^n$ and $\nu_1+\nu_2 =-n-2$ with $\nu_1\cdot \nu_2\leq 0$. Let $0<\rho_\epsilon<\frac{R}{20}$ to be chosen as in Theorem \ref{mtheorem}. Let us divide $\Omega$ in several subregions, one of them away from the boundary, say 
$$\widetilde{\Omega}_0:=\left\{z\in\Omega: \dist(z,\partial\Omega)>\frac{\rho_\epsilon}4 \right\},$$
 and the rest being contained in small balls ${B}_k:={B}(z_k,\rho_\epsilon)$, centered in the boundary point $z_k$, with controlled overlapping (namely, we require that the family $\{\frac14 {B}_k\}$ is disjoint while the family  $\{\frac12 B_k\}$ covers $\partial\Omega$, see Figure \ref{figFinalDecomposition}) so that the boundary of $\Omega$ coincides, after rotation and translation, with the boundary of a $(\delta,R,n,p)$-admissible  domain $\Omega_k$ in the strip $(-6\rho_\epsilon, 6\rho_\epsilon)\times\R$ (this is possible by Definitions \ref{defLipschitz} and \ref{defAdmissible}). Then, we have that
\begin{equation}\label{eqBreakInDomains}
\norm{T^{\nu}_\Omega 1}_{L^p(\Omega)}\leq  \norm{T^{\nu}_\Omega 1}_{L^p(\widetilde{\Omega}_0)}+ \sum_k\norm{T^{\nu}_\Omega 1}_{L^p(B_k\cap \Omega)}.
\end{equation}
The term corresponding to the central region is an error term. Namely, for $z\in\widetilde{\Omega}_0$ we have that 
$$|T^{\nu} _\Omega 1 (z)|\leq \int_{|w-z|>{\rho_\epsilon}/5} \frac{1}{|w-z|^{n+2}} \, dm(w)\lesssim   \frac{1}{\rho_\epsilon^n}$$
and, therefore, 
\begin{equation}\label{eqInnerRegion}
  \norm{T^{\nu}_\Omega 1}_{L^p(\widetilde{\Omega}_0)}\lesssim \frac{1}{\rho_\epsilon^n}|\Omega|^\frac1p.
\end{equation}
\begin{figure}[ht]
 \centering
 {\includegraphics[width=0.7\textwidth]{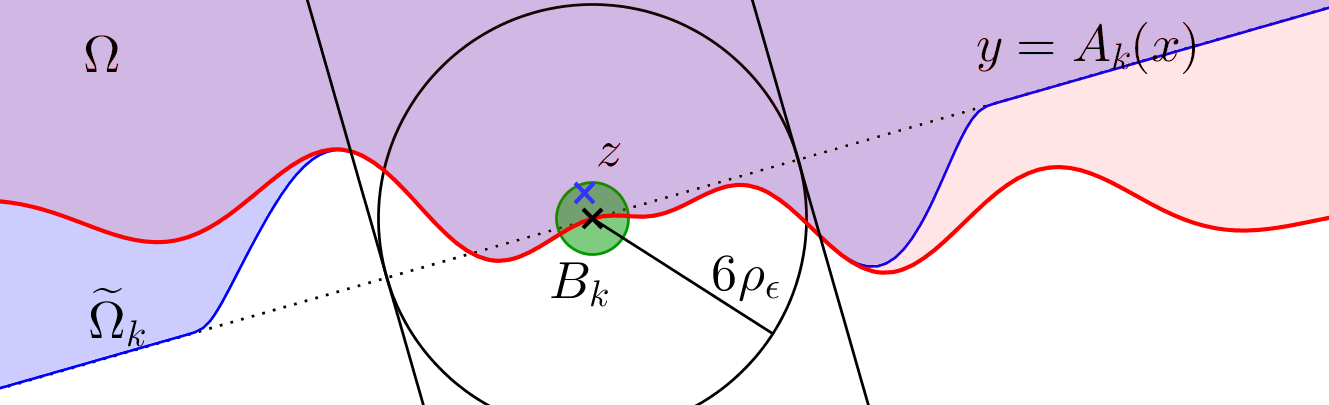}}    
  \caption{Disposition of the domains $\Omega$ and $\widetilde{\Omega}_k$ in the proof of Theorem \ref{theoGeometric} before the rotation and the translation.}\label{figDefiningFunction}
\end{figure}

For the peripheral regions (i.e., close to the boundary of the domain), we use
\begin{equation}\label{eqBreakInDomainsInsideTheIntegral}
\norm{T^{\nu}_\Omega 1}_{L^p(B_k\cap \Omega)}=  \norm{T^{\nu}_\Omega 1}_{L^p(B_k\cap \widetilde{\Omega}_k)}\leq \norm{T^{\nu} \chi_{\widetilde{\Omega}_k}}_{L^p(B_k\cap \widetilde{\Omega}_k)}+\norm{T^{\nu} \left(\chi_{\Omega}-\chi_{\widetilde{\Omega}_k}\right)}_{L^p(B_k\cap \widetilde{\Omega}_k)},
\end{equation}
 where we wrote $\widetilde{\Omega}_k$ for the preimage of ${\Omega}_k$ by the corresponding rigid movement (see Figure \ref{figDefiningFunction}). 
 
Arguing as we did with the central region, we have that
\begin{equation}\label{eqPeripheralRegionEasy}
\norm{T^{\nu} \left(\chi_{\Omega}-\chi_{\widetilde{\Omega}_k}\right)}_{L^p(B_k\cap \widetilde{\Omega}_k)}\lesssim \frac{1}{\rho_\epsilon^n}|B_k\cap \Omega|^\frac1p.
\end{equation}
Finally, for the other term, we use Theorem \ref{mtheorem}. Consider $A_k$ to be the defining function of $\Omega_k$. Then, by Theorem \ref{mtheorem} we have that
\begin{equation}\label{eqPeripheralRegionHard}
\norm{T^{\nu} \chi_{\widetilde{\Omega}_k}}_{L^p(\widetilde{\Omega}_k\cap B_k))}^p\leq C \left( \norm{A_k}^p_{\dot B_{p,p}^{n-1/p+1}(-5\rho_\epsilon,5\rho_\epsilon,)}+  \rho_\epsilon^{2-np}(1+\epsilon)^{|\gamma|p}\right).
\end{equation}

Putting together \rf{eqBreakInDomains} and \rf{eqBreakInDomainsInsideTheIntegral} with \rf{eqInnerRegion}, \rf{eqPeripheralRegionEasy} and \rf{eqPeripheralRegionHard}, we get 
$$\norm{T^{\nu}_\Omega 1}_{L^p(\Omega)}^p\leq C \left( \norm{A_k}^p_{\dot B_{p,p}^{n-1/p+1}(-5\rho_\epsilon,5\rho_\epsilon,)}+  \rho_\epsilon^{2-np}(1+\epsilon)^{|\gamma|p}\right).$$
This fact, together with \rf{eqSwitchDerivatives} and Lemma \ref{lemNormBeta}, shows \rf{eqBoundNormTGamma}. 
\end{proof}

\subsection{The case $p>2$}\label{secPGtr2}
From \cite[Theorem 1.1]{PratsTolsa}, we have the following corollary.
\begin{corollary*}
Let $p>2$, $n\geq 1$, let $\Omega\subset \C$ be a bounded Lipschitz domain and let $\gamma\in \Z^2\setminus{(-1,-1)}$ with $\gamma_1+\gamma_2=-2$. Then the following statements are equivalent:
\begin{enumerate}[a)]
\item The truncated operator $T^\gamma_\Omega$ is bounded in $W^{n,p}(\Omega)$.
\item For every polynomial $P$ of degree at most $n-1$, we have that $T^\gamma_\Omega(P)\in W^{n,p}(\Omega)$.
\end{enumerate}
\end{corollary*}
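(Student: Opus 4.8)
The plan is to obtain the equivalence as a direct specialization of the $T(P)$-theorem of \cite[Theorem 1.1]{PratsTolsa}; the only thing to verify is that each $T^\gamma$ with $\gamma\in\Z^2\setminus\{(-1,-1)\}$ and $\gamma_1+\gamma_2=-2$ falls within the family of operators that theorem applies to. First I would recall the statement of \cite[Theorem 1.1]{PratsTolsa}: for $p>2$, $n\geq1$, a bounded uniform domain $\Omega\subset\C$ (Lipschitz domains being a particular case), and a convolution Calder\'on-Zygmund operator $T$ on the plane with a smooth, even, homogeneous kernel of degree $-2$ --- a family which includes the Beurling transform, its conjugate, and all their iterates --- the truncated operator $T_\Omega=\chi_\Omega\,T(\chi_\Omega\,\cdot)$ is bounded on $W^{n,p}(\Omega)$ if and only if $T_\Omega(P)\in W^{n,p}(\Omega)$ for every polynomial $P$ of degree at most $n-1$.

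The next step is the verification. For $\gamma$ as above, the kernel $K^\gamma(z)=z^{\gamma_1}\bar z^{\gamma_2}$ is $C^\infty$ on $\C\setminus\{0\}$ and positively homogeneous of degree $\gamma_1+\gamma_2=-2$; it is even, since $K^\gamma(-z)=(-1)^{\gamma_1+\gamma_2}K^\gamma(z)=K^\gamma(z)$; and because $\gamma\neq(-1,-1)$ together with $\gamma_1+\gamma_2=-2$ forces $\gamma_1\neq\gamma_2$, writing $z=re^{i\theta}$ gives $K^\gamma(z)=r^{-2}e^{i(\gamma_1-\gamma_2)\theta}$, whose integral over any circle centered at the origin vanishes, by the same type of angular cancellation as in \eqref{eqCancellation}. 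Finally, by \eqref{eqIterateBeurling} and the discussion in the Example of Subsection \ref{secOperators}, $T^\gamma$ agrees, up to a multiplicative constant, with an iterate of the Beurling transform when $\gamma_1\leq-2$ and with an iterate of the conjugate Beurling transform when $\gamma_2\leq-2$ (and these two cases exhaust $\{\gamma_1+\gamma_2=-2\}\setminus\{(-1,-1)\}$), so in particular $T^\gamma$ is bounded on $L^q$ for all $1<q<\infty$. Hence $T^\gamma$ is an admissible operator for \cite[Theorem 1.1]{PratsTolsa}.

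With this done, the two implications are immediate. Implication a)$\Rightarrow$b) is trivial: $\Omega$ is bounded, so every polynomial of degree at most $n-1$ restricts to an element of $W^{n,p}(\Omega)$, and if $T^\gamma_\Omega$ is bounded on $W^{n,p}(\Omega)$ then $T^\gamma_\Omega(P)\in W^{n,p}(\Omega)$. Implication b)$\Rightarrow$a) is exactly \cite[Theorem 1.1]{PratsTolsa} applied to $T=T^\gamma$. One may also note that b) is a finite testing condition, since the space of two-variable polynomials of degree at most $n-1$ is finite-dimensional. I do not foresee a genuine obstacle: the analytic heart is entirely contained in \cite{PratsTolsa}, and the only point demanding a little care is confirming that the operator class there really includes all the relevant $T^\gamma$ --- notably the iterates of the conjugate Beurling transform, obtained from those of the Beurling transform by complex conjugation --- and not merely the Beurling transform itself.
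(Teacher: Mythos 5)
Your proposal is correct and follows the same route as the paper: the corollary is deduced directly from the $T(P)$-theorem of \cite[Theorem 1.1]{PratsTolsa}, with a)$\Rightarrow$b) trivial and b)$\Rightarrow$a) being that theorem applied to $T=T^\gamma$. Your explicit verification that each $K^\gamma$ is smooth, even, homogeneous of degree $-2$ with vanishing circular means, and that $T^\gamma$ is (up to a constant) an iterate of the Beurling transform or of its conjugate, is exactly the admissibility check the paper leaves implicit via its Example in Subsection \ref{secOperators}.
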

We will use a quantitative version of this corollary. We state it below without proof. We refer the reader to \cite[pages 2965--2969]{PratsTolsa} for the details. 

Let us fix some notation. Given a multiindex $\lambda\in\N_0^2$, we write $P_\lambda(z)=z^{\lambda_1}\bar{z}^{\lambda_2}$, that is, $P_\lambda(z)=z^\lambda$.
\begin{corollary}\label{coroTPQuantitative}
Let $p>2$, $n\geq 1$, let $\Omega\subset \C$ be a Lipschitz domain and let $\gamma\in \Z^2\setminus{(-1,-1)}$ with $\gamma_1+\gamma_2=-2$. Then
\begin{equation}\label{eqBoundEverythingF}
\norm{\nabla^n T^\gamma_\Omega f}_{L^p(\Omega)}\lesssim_n \left(\norm{T^\gamma}_{L^p\to L^p}+ C_\delta \norm{K_{\gamma}}_{CZ}+\sum_{|\lambda|<n} \norm{\nabla^nT^\gamma_\Omega P_\lambda}_{L^p(\Omega)}\right)\norm{f}_{W^{n,p}(\Omega)} ,
\end{equation} 
where we wrote
\begin{equation*}
\norm{K_{\gamma}}_{CZ}:=\sup_{j\leq n,z\in\C \setminus\{0\}} |\nabla^{j}K_{\gamma}(z)||z|^{j+2}.
\end{equation*}
\end{corollary}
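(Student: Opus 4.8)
The plan is to reproduce the proof of the $T(P)$-theorem of \cite{PratsTolsa} while tracking the dependence of the constants on $T^\gamma$. Fix $\alpha\in\N_0^2$ with $|\alpha|=n$; since there are finitely many such $\alpha$ it suffices to bound $\norm{D^\alpha T^\gamma_\Omega f}_{L^p(\Omega)}$. As $p>2$, the Sobolev Embedding Theorem gives $f\in C^{n-1,1-2/p}(\overline\Omega)$ with $\norm{f}_{C^{n-1,1-2/p}(\overline\Omega)}\lesssim\norm{f}_{W^{n,p}(\Omega)}$, so every derivative $D^\beta f$ with $|\beta|\le n-1$ is bounded and $\nabla^{n-1}f$ is Hölder continuous; this is used throughout. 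Note also that $\gamma_1\ge0$ or $\gamma_2\ge0$ since $\gamma\neq(-1,-1)$, so Lemma \ref{remDiffCharacteristic} applies to $\gamma$ and to $\gamma+\beta$ for $\beta\in\N_0^2$.

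The first step is to freeze a Taylor polynomial. For $z\in\Omega$ let $P_z$ be the Taylor polynomial of $f$ of degree $n-1$ centred at $z$; expanding $P_z(w)$ in powers of $w-z$ and recognizing $\int_\Omega(z-w)^{\gamma+\beta}\,dm(w)=T^{\gamma+\beta}_\Omega 1(z)$ (an absolutely convergent integral when $|\beta|\ge1$, a principal value for $\beta=0$) gives the pointwise identity
\begin{equation*}
T^\gamma_\Omega f(z)=\sum_{|\beta|<n}\frac{(-1)^{|\beta|}}{\beta!}\,D^\beta f(z)\,T^{\gamma+\beta}_\Omega 1(z)+E(z),\qquad E(z):=\int_\Omega K_\gamma(z-w)\bigl(f(w)-P_z(w)\bigr)\,dm(w),
\end{equation*}
where $f-P_z$ vanishes to order $n$ at $w=z$, so $E$ is given by an absolutely convergent integral. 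Applying $D^\alpha$ and the Leibniz rule to the finite sum produces factors $D^\sigma D^\beta f$ (bounded in $L^\infty$ when $|\sigma|+|\beta|\le n-1$, in $L^p$ when $|\sigma|+|\beta|=n$) paired with $D^{\alpha-\sigma}T^{\gamma+\beta}_\Omega 1$; differentiating the kernel costs constants $\lesssim_n(1+\norm{K_\gamma}_{CZ})$, and by a triangular change of basis with coefficients that are powers of $\diam\Omega$, together with the analogue of Lemma \ref{remDiffCharacteristic} for fewer than $n$ derivatives, each $D^{\alpha-\sigma}T^{\gamma+\beta}_\Omega 1$ is either constant, zero, or one of the quantities bounded by $\sum_{|\lambda|<n}\norm{\nabla^n T^\gamma_\Omega P_\lambda}_{L^p(\Omega)}$. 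Hence
\begin{equation*}
\Bigl\|D^\alpha\!\!\sum_{|\beta|<n}\frac{(-1)^{|\beta|}}{\beta!}D^\beta f\,T^{\gamma+\beta}_\Omega 1\Bigr\|_{L^p(\Omega)}\lesssim_{n,\diam\Omega}\Bigl(\norm{T^\gamma}_{L^p\to L^p}+\norm{K_\gamma}_{CZ}+\sum_{|\lambda|<n}\norm{\nabla^n T^\gamma_\Omega P_\lambda}_{L^p(\Omega)}\Bigr)\norm{f}_{W^{n,p}(\Omega)}.
\end{equation*}

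It remains to estimate $D^\alpha E$. Differentiating under the integral sign (the principal value requires care, since one lands at homogeneity $-2$), the $z$-dependence sits in $K_\gamma(z-w)$ and in the coefficients of $P_z$. The idea is to organize the Leibniz terms so that every kernel of homogeneity $<-2$ acts, after integrating by parts in $w$, only on the top-order derivative $D^\alpha f$: one obtains a main term $c_{\gamma,\alpha}\,T^\gamma_\Omega(D^\alpha f)$ — here $D^\alpha_w(f-P_z)=D^\alpha f$ because $P_z$ has degree $<n$ — with $L^p(\Omega)$-norm $\le\norm{T^\gamma}_{L^p\to L^p}\norm{f}_{W^{n,p}(\Omega)}$, together with boundary integrals over $\partial\Omega$ that merge with the $\nabla^n T^\gamma_\Omega P_\lambda$ terms, plus remainders of the schematic form
\begin{equation*}
\int_\Omega\frac{\bigl|\nabla^{n-j}f(w)-\nabla^{n-j}P_z(w)\bigr|}{|z-w|^{\,n+2-j}}\,dm(w),\qquad 0<j\le n.
\end{equation*}
For the last I would use a Whitney covering $\mathcal{W}$ of $\Omega$: with $Q\ni z$, the local part $|z-w|\lesssim\ell(Q)$ is controlled using the vanishing moments \rf{eqCancellation}--\rf{eqCancellationGreen} of $K_\gamma$, the bound $|\nabla^jK_\gamma(z-w)|\lesssim\norm{K_\gamma}_{CZ}|z-w|^{-2-j}$, and the Taylor remainder estimate coming from $\nabla^{n-1}f\in C^{0,1-2/p}$; the non-local part $|z-w|\gtrsim\ell(Q)$ is raised to the $p$-th power and summed over Whitney cubes exactly as in \rf{eqBoundByBetas} and the proof of Theorem \ref{mtheorem}, the hypothesis $p>2$ being precisely what makes the resulting geometric series in $\ell(Q)$ converge. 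Summing over $Q\in\mathcal{W}$ and over the finitely many $\alpha$ yields \rf{eqBoundEverythingF}.

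The main obstacle is the bookkeeping of $D^\alpha E$. Two points are delicate: first, ensuring that after the integration by parts the only singular integral operator left acting on $f$ is $T^\gamma$ applied to $D^\alpha f\in L^p$ and that no operator of homogeneity $<-2$ survives acting on $f$ itself — otherwise the estimate would be false for generic Lipschitz $\Omega$, as the square example in the Introduction shows; second, identifying the boundary integrals over $\partial\Omega$ and the low-order Leibniz terms as linear combinations of $D^\alpha T^\gamma_\Omega P_\lambda$, $|\lambda|<n$, and of Whitney-summable errors, which is the role of hypothesis (b) of the qualitative corollary. That $|\gamma|$ does not appear in the final constant is because its growth is absorbed by $\norm{T^\gamma}_{L^p\to L^p}$ and $\norm{K_\gamma}_{CZ}\approx_n|\gamma|^n$; the complete argument occupies \cite[pp.~2965--2969]{PratsTolsa}.
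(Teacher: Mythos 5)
The paper does not prove this corollary at all: it is stated as a quantitative re-reading of the proof of the $T(P)$-theorem in \cite{PratsTolsa}, with an explicit pointer to pages 2965--2969 of that paper. Your overall architecture --- approximate $f$ by polynomials of degree $n-1$, use the hypothesis on $\nabla^n T^\gamma_\Omega P_\lambda$ for the polynomial part, Calder\'on--Zygmund estimates measured by $\norm{K_\gamma}_{CZ}$ for the approximation error, and $\norm{T^\gamma}_{L^p\to L^p}$ for the single genuinely singular term $T^\gamma_\Omega(D^\alpha f)$, with $p>2$ entering through the embedding $W^{n,p}(\Omega)\hookrightarrow C^{n-1,1-2/p}(\overline\Omega)$ --- is indeed the architecture of that proof, and it correctly explains why only these three quantities appear in the constant.

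The specific decomposition you set up, however, would fail as written, because your approximating polynomial $P_z$ moves with the point $z$. First, in the Leibniz expansion of $D^\alpha\bigl(\sum_{|\beta|<n}\frac{(-1)^{|\beta|}}{\beta!}D^\beta f\cdot T^{\gamma+\beta}_\Omega 1\bigr)$ the terms with $\sigma=\alpha$ and $|\beta|=n-1$ contain $D^{\alpha+\beta}f$, a derivative of order $2n-1$ that does not exist for $f\in W^{n,p}$; you only discuss $|\sigma|+|\beta|\leq n$. Correspondingly, one $z$-derivative of $P_z(w)$ already produces $\sum_{|\beta|=n-1}\frac{D^{\beta+(1,0)}f(z)}{\beta!}(w-z)^\beta$ (only the top-degree coefficients survive, and they carry $n$-th derivatives of $f$), so $D^\sigma_z P_z$ for $|\sigma|\geq 2$ would require $\nabla^{n+1}f$ and the term-by-term differentiation of $E$ is meaningless for $n\geq 2$; the non-existent derivatives in the two pieces must cancel, which is exactly why they cannot be Leibniz-expanded separately. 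Second, your schematic remainders are not absolutely convergent for small $j$: the best bound available from $\nabla^{n-1}f\in C^{0,1-2/p}$ is $|\nabla^{n-j}(f-P_z)(w)|\lesssim |z-w|^{j-2/p}$, which gives a radial integrand of order $r^{2j-n-1-2/p}$ near $r=0$, divergent unless $j>n/2+1/p$. The repair is the one actually used in \cite{PratsTolsa}: on each Whitney cube $Q$ replace $f$ by a \emph{fixed} approximating polynomial of degree $n-1$ on $3Q$ (the two-dimensional analogue of $\mathbf{R}^{n-1}_{3I}$), independent of $z\in Q$. Then each piece is separately $n$ times differentiable on $Q$, the intermediate-homogeneity terms never arise, and the non-local error is controlled by telescoping the approximating polynomials over dyadic scales exactly as in Claim \ref{claimInter}; with that replacement your constant-tracking goes through and yields \rf{eqBoundEverythingF}.
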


Using Theorem \ref{theoGeometric} and Corollary \ref{coroTPQuantitative}, we will prove the following theorem, which in particular implies Theorem \ref{theoGeometricNaive}.
\begin{theorem}\label{theoGeometricPGtr2}
Consider $p>2$,   $n\geq 1$ and let $\Omega$ be a bounded Lipschitz domain with parameterizations in $B^{n+1-1/p}_{p,p}$. Then, for every $\epsilon>0$ there exists a constant $C_\epsilon$ such that for every multiindex $\gamma\in \Z^2\setminus\{(-1,-1)\}$ with $\gamma_1+\gamma_2\geq -2$, one has 
\begin{equation}\label{eqOperatorsBiggerHom}
\norm{T^\gamma_\Omega}_{W^{n,p}(\Omega)\to W^{n+{\gamma_1+\gamma_2+2},p}(\Omega)}\leq C_\epsilon |\gamma|^{n+{\gamma_1+\gamma_2+2}}\left( \norm{N}_{B^{n-1/p}_{p,p}(\partial\Omega)}+(1+\epsilon)^{|\gamma|}\right) + \diam(\Omega)^{\gamma_1+\gamma_2+2}.
\end{equation}

In particular, for every $m\in\N$ we have that the iteration of the Beurling transform $(\Beurling^m)_\Omega$ is bounded in $W^{n,p}(\Omega)$, with norm
\begin{equation}\label{eqOperatorsBiggerHomBeurling}
\norm{(\Beurling^m)_\Omega}_{W^{n,p}(\Omega)\to W^{n,p}(\Omega)}\leq C_\epsilon m^{n+1}\left( \norm{N}_{B^{n-1/p}_{p,p}(\partial\Omega)}+ (1+\epsilon)^{m}\right).
\end{equation}
\end{theorem}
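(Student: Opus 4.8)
The plan is to establish the homogeneous estimate \rf{eqOperatorsBiggerHom} by separating two regimes: the \emph{critical case} $\gamma_1+\gamma_2=-2$, which already contains the iterates of the Beurling transform, and the \emph{smoothing case} $\gamma_1+\gamma_2>-2$; and then to read off \rf{eqOperatorsBiggerHomBeurling} from the former.

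For the critical case fix $\gamma$ with $\gamma_1+\gamma_2=-2$, $\gamma\neq(-1,-1)$, so that $W^{n+\gamma_1+\gamma_2+2,p}(\Omega)=W^{n,p}(\Omega)$, and apply Corollary \ref{coroTPQuantitative}. This reduces the boundedness of $T^\gamma_\Omega$ on $W^{n,p}(\Omega)$ to controlling three quantities. First, $\norm{T^\gamma}_{L^p\to L^p}$ grows at most polynomially in $|\gamma|$: by \rf{eqIterateBeurling}, $T^\gamma$ is a scalar multiple of an iterate of the Beurling transform (or of its conjugate), whose kernel is a constant times $e^{\pm 2im\theta}|z|^{-2}$, and classical Calder\'on--Zygmund estimates for operators with smooth homogeneous kernel give $\norm{T^\gamma}_{L^p\to L^p}\lesssim_p(1+|\gamma|)^2=e^{o(|\gamma|)}$. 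Second, differentiating $K_\gamma(z)=z^\gamma$ shows $|\nabla^j K_\gamma(z)|\lesssim_n(|\gamma|+n)^j|z|^{-2-j}$, hence $\norm{K_\gamma}_{CZ}\lesssim_n(|\gamma|+n)^n$. The real work is the third quantity, $\sum_{|\lambda|<n}\norm{\nabla^n T^\gamma_\Omega P_\lambda}_{L^p(\Omega)}$.

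To handle it, I would expand $w^\lambda=\sum_{\mu\leq\lambda}\binom{\lambda}{\mu}(w-z)^\mu z^{\lambda-\mu}$ inside the integral defining $T^\gamma(\chi_\Omega P_\lambda)$, obtaining the pointwise identity on $\Omega$
\begin{equation*}
T^\gamma_\Omega P_\lambda=\sum_{\mu\leq\lambda}(-1)^{|\mu|}\binom{\lambda}{\mu}P_{\lambda-\mu}\,T^{\gamma+\mu}_\Omega 1 .
\end{equation*}
Differentiating $n$ times with the Leibniz rule and using $D^\alpha P_{\lambda-\mu}=0$ once $|\alpha|>|\lambda-\mu|$ — so at least $n-|\lambda-\mu|\geq1$ derivatives fall on $T^{\gamma+\mu}_\Omega 1$, the surviving polynomial prefactors being bounded on $\Omega$ by $C_n\diam(\Omega)^{|\lambda-\mu|-j}$ — the task reduces to bounding $\norm{\nabla^j T^{\gamma+\mu}\chi_\Omega}_{L^p(\Omega)}$ for $0\leq j\leq n$, $\mu\leq\lambda$. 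Here I would invoke the derivative identities of Lemma \ref{remDiffCharacteristic} and their iterates: each $D^\alpha T^\beta_\Omega 1$ is, up to a constant of size $\lesssim_n(|\beta|+|\alpha|)^{|\alpha|}$, either constant on $\Omega$ or $T^{\beta-\alpha}_\Omega 1$, and the vanishing clause ($D^\alpha T^\beta_\Omega 1=0$ once the derivative order in the nonnegative coordinate of $\beta$ is exceeded) forces the indices actually occurring to keep that coordinate nonnegative. In this way everything comes back to terms $(\text{poly of degree}<n)\cdot T^\sigma\chi_\Omega$ with $\sigma_1+\sigma_2=-n-2$, $\sigma_1\sigma_2\leq0$ and $|\sigma|\leq|\gamma|+O(n)$, to which Theorem \ref{mtheorem} (through the localization in the proof of Theorem \ref{theoGeometric}) applies, giving $\norm{T^\sigma\chi_\Omega}_{L^p(\Omega)}\lesssim_\epsilon\norm{N}_{B^{n-1/p}_{p,p}(\partial\Omega)}+(1+\epsilon)^{|\gamma|}$ plus harmless $\diam(\Omega)$-terms. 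Keeping track that a single constant $\lesssim_n(|\gamma|+n)^n$ enters per term (not a nested product) and that passing from $\sigma$ to $\gamma$ only multiplies the exponential base by $(1+\epsilon)^{O(n)}$, one gets $\norm{\nabla^n T^\gamma_\Omega P_\lambda}_{L^p(\Omega)}\lesssim_\epsilon|\gamma|^{n}\bigl(\norm{N}_{B^{n-1/p}_{p,p}(\partial\Omega)}+(1+\epsilon)^{|\gamma|}\bigr)+C$; feeding this together with the $L^p$ and $CZ$ bounds into Corollary \ref{coroTPQuantitative} yields \rf{eqOperatorsBiggerHom} for $\gamma_1+\gamma_2=-2$. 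The terms $T^{\gamma+\mu}\chi_\Omega$ with $\mu\neq0$ above have degree $>-2$ and are settled by the same mechanism, which is the smoothing case.

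For the smoothing case, $\gamma_1+\gamma_2=k-2\geq-1$ and $\gamma\neq(-1,-1)$ (so $\gamma$ has a nonnegative coordinate), I would descend on $k$: on $\Omega$ one has the operator identities $\bar\partial\,T^\gamma_\Omega=\gamma_2\,T^{\gamma-(0,1)}_\Omega$ and $\partial\,T^\gamma_\Omega=\gamma_1\,T^{\gamma-(1,0)}_\Omega$ as long as the shifted kernel is still locally integrable, together with $T^\gamma_\Omega$ bounded on $L^p(\Omega)$ by Young's inequality applied to the $L^1$ kernel $\chi_{B(0,\diam\Omega)}(z)|z|^{k-2}$; at the final step, when degree $-1$ is reached, an extra identity term $c_\gamma\,\mathrm{Id}$ appears (the $\bar\partial$-derivative of the Cauchy-type operator). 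Iterating $k$ times lands on the already-settled degree $-2$ estimate, multiplying the constant by at most $|\gamma|^{k}$ and producing the exponent $n+\gamma_1+\gamma_2+2$ and the additive $\diam(\Omega)^{\gamma_1+\gamma_2+2}$ in \rf{eqOperatorsBiggerHom}. Finally, \rf{eqOperatorsBiggerHomBeurling} is the special case $\gamma=(-m-1,m-1)$: by \rf{eqIterateBeurling}, $(\Beurling^m)_\Omega=\tfrac{(-1)^m m}{\pi}T^\gamma_\Omega$ with $|\gamma|=2m$ and $\gamma_1+\gamma_2+2=0$, so \rf{eqOperatorsBiggerHom} gives the claim after absorbing $m/\pi$ into $m^{n+1}$ and the change of base $(1+\epsilon')^2=1+\epsilon$ into $C_\epsilon$. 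I expect the main obstacle to be precisely the bookkeeping in the third paragraph: guaranteeing that, as one unwinds $\nabla^n T^\gamma_\Omega P_\lambda$ through the polynomial expansion, the Leibniz rule, the identities of Lemma \ref{remDiffCharacteristic} and the localization, the power of $|\gamma|$ remains exactly $n$ and the base of the exponential remains arbitrarily close to $1$; the analytic input (Theorem \ref{mtheorem} and the quantitative $T(P)$-theorem) can be treated as a black box.
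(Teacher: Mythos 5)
Your proposal follows essentially the same route as the paper: the critical case $\gamma_1+\gamma_2=-2$ is handled via the quantitative $T(P)$-theorem (Corollary \ref{coroTPQuantitative}) with the binomial expansion $T^\gamma_\Omega P_\lambda=\sum_{\nu\leq\lambda}(-1)^{|\nu|}\binom{\lambda}{\nu}P_{\lambda-\nu}T^{\gamma+\nu}_\Omega 1$, Lemma \ref{remDiffCharacteristic} and Theorem \ref{theoGeometric}, while the case $\gamma_1+\gamma_2>-2$ is reduced to it by the differentiation identities $D^\nu T^\gamma_\Omega f=C_{\nu,\gamma}T^{\gamma-\nu}_\Omega f$ (plus the identity term at homogeneity $-1$), exactly as in the paper's \rf{eqDerivateTgammaF}. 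The only point treated more lightly than in the paper is the justification of those identities when acting on $f$ rather than on $\chi_\Omega$ — the paper proves them via a difference-quotient argument subtracting $f(z)$ and using the H\"older continuity of $f$ from the Sobolev embedding — but the overall argument and all quantitative dependencies match.
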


\begin{proof}
Note that by \rf{eqDifferentialDimension}, we have that $B^{n+1-1/p}_{p,p}\subset B^{n+1-2/p}_{\infty,\infty}$ and, since $1-2/p>0$, we also have that $B^{n+1-2/p}_{\infty,\infty}=C^{n,1-2/p}$ (see \cite[Section 2.5.7]{TriebelTheory}) so $\Omega$ is in fact a $(\delta,R)$-$C^{n-1,1}$-domain, where $\delta$ and $R$ depend on the size of the local parameterizations of the boundary and on $\norm{N}_{B^{n-1/p}_{p,p}(\partial\Omega)}+\mathcal{H}^1(\partial\Omega)$. Therefore, we can use Theorem \ref{theoGeometric}. 

First we study the case ${\gamma_1+\gamma_2+2}=0$. Consider a given $\gamma\in\Z^2\setminus\{(-1,-1)\}$ with $\gamma_1+\gamma_2=-2$. Recall that for $m\neq 0$, $\Beurling^m=\frac{(-1)^m m}{\pi}T^{(-m-1,m-1)}$ by \rf{eqIterateBeurling}. The proof of the $L^p$ boundedness of these operators with norm smaller than $C_p m^2$  can be found in \cite[Corollary 4.5.1]{AstalaIwaniecMartin}.  Thus, for $m=\gamma_2+1=\frac{|\gamma|}2$, we have that
\begin{equation}\label{eqGammaLP}
\norm{T^\gamma}_{L^p\to L^p}=\frac{\pi}{m}\norm{\Beurling^m}_{L^p\to L^p}\lesssim |\gamma|.
\end{equation} On the other hand, a short computation shows that
\begin{equation}\label{eqCZ}
\norm{K_{\gamma}}_{CZ}=\sup_{j\leq n,z\in\C\setminus \{0\}} |\nabla^{j}K_{\gamma}(z)||z|^{j+2}\lesssim |\gamma|^{n},
\end{equation}
with constant depending on $n$.

In order to use Corollary \ref{coroTPQuantitative}, it only remains to check the bounds for $\norm{D^\alpha T^\gamma_\Omega P_\lambda}_{L^p(\Omega)}$ for all multiindices $\alpha, \lambda\in \N_0^2$ with $|\alpha|=n$ and $|\lambda|<n$. 
Using the binomial expansion $w^\lambda=\sum_{\nu\leq\lambda} (-1)^{|\nu|}{\lambda\choose\nu}(z-w)^\nu z^{\lambda-\nu}$, we can write
$$T^\gamma_\Omega P_\lambda(z)= \lim_{\varepsilon\to 0} \int_{\Omega \setminus B_\varepsilon(z)}(z-w)^\gamma w^\lambda\,dm(w)
= \sum_{\vec{0}\leq\nu\leq\lambda}(-1)^{|\nu|}{\lambda\choose\nu} z^{\lambda-\nu}T^{\gamma+\nu}_\Omega 1(z).$$
Differentiating (and assuming that $0\in\Omega$) we find that
$$|\nabla^nT^\gamma_\Omega P_\lambda(z)|\lesssim  2^{n} \sum_{\vec{0}\leq\nu\leq\lambda} \sum_{j=0}^n  (1+\diam(\Omega))^{n}|\nabla^j T^{\gamma+\nu}_\Omega 1(z)|$$
and, thus, by the equivalence of norms in the Sobolev space \rf{eqEquivalenceNormsSobolev}, we have that
\begin{align*}
\norm{\nabla^n T^\gamma_\Omega P_\lambda}_{L^p(\Omega)}^p
	& \lesssim_\Omega \sum_{\vec{0}\leq\nu\leq\lambda} \left(\norm{\nabla^{n+|\nu|} T^{\gamma+\nu}_\Omega 1}_{L^p(\Omega)}^p+\norm{T^{\gamma+\nu}_\Omega 1}_{L^p(\Omega)}^p \right),
\end{align*}
with constants depending on $n$, $p$ and the diameter and the Sobolev embedding constant of $\Omega$. By Lemma \ref{remDiffCharacteristic} and Theorem \ref{theoGeometric}, we have that
\begin{align}\label{eqBoundNablaPoly}
\norm{\nabla^n T^\gamma_\Omega P_\lambda}_{L^p(\Omega)}^p
		& \lesssim \sum_{\gamma\leq \nu\leq\gamma+\lambda}  |\nu|^{np} \left(\norm{N}_{B^{n-1/p}_{p,p}(\partial\Omega)}^p+  (1+\epsilon)^{|\nu| p}\right) + \sum_{\gamma\leq\nu\leq\gamma+\lambda}  \norm{T^{\nu} _\Omega 1}_{L^p(\Omega)}^p.
\end{align}
The Young Inequality \rf{eqYoung} implies that for all functions $f\in L^p$ and $g\in L^1$, $\norm{f \ast g}_{L^p}\leq \norm{f}_{L^p}\norm{g}_{L^1}$. Thus,  for $\gamma<\nu\leq\gamma+\lambda$ we have that
\begin{equation}\label{eqYoungToLp}
\norm{T^{\nu}_\Omega f}_{L^p}\leq\diam(\Omega)^{\nu_1+\nu_2+2}\norm{f}_{L^p},
\end{equation}
 and taking $f=\chi_\Omega$, $\norm{T^\nu _\Omega 1}_{L^p}^p \lesssim 1+ \diam(\Omega)^{(n-1)p+2}$. For $\nu=\gamma$, the same holds  with a slightly worse constant by \rf{eqGammaLP}. Namely, 
 \begin{equation}\label{eqYoungToLpHomo}
\norm{T^{\gamma}_\Omega f}_{L^p}\leq C_p |\gamma| \norm{f}_{L^p}.
\end{equation}

Since $p>2$, putting \rf{eqBoundEverythingF}, \rf{eqGammaLP}, \rf{eqCZ}, \rf{eqBoundNablaPoly} and \rf{eqYoungToLp} and \rf{eqYoungToLpHomo} together, we get
\begin{align}\label{eqControlNablaTGamma}
\norm{\nabla^nT^\gamma_\Omega}_{W^{n,p}(\Omega)\to L^p(\Omega)}
\nonumber	& \lesssim \norm{K_{\gamma}}_{CZ} + \norm{T^\gamma}_{L^p\to L^p}+\sum_{|\lambda|<n}\norm{\nabla^n(T_\Omega P_\lambda)}_{L^p(\Omega)}\\
\nonumber	& \lesssim |\gamma|^{n}+  |\gamma| + |\gamma|^{n} \left(\norm{N}_{B^{n-1/p}_{p,p}(\partial\Omega)}+  (1+\epsilon)^{|\gamma|}\right)\\
	& \lesssim  |\gamma|^{n} \left(\norm{N}_{B^{n-1/p}_{p,p}(\partial\Omega)}+  (1+\epsilon)^{|\gamma| }\right),
\end{align}
with constants depending on $n$, $p$, $\delta$, the diameter of $\Omega$, its Sobolev embedding constant and $\epsilon$, but not on $\gamma$. The estimate \rf{eqControlNablaTGamma}, together with \rf{eqYoungToLpHomo} proves \rf{eqOperatorsBiggerHom} when $\gamma_1+\gamma_2=-2$ and \rf{eqOperatorsBiggerHomBeurling} for every $m>0$.

It remains to study the operators of homogeneity greater than $-2$. In that case we will see that we can differentiate under the integral sign to recover the previous situation. Fix $\gamma\in\Z^2$ such that ${\gamma_1+\gamma_2+2}>0$. By \rf{eqYoungToLp} we have that  $\norm{T^\gamma_\Omega f}_{L^p}\leq\diam(\Omega)^{\gamma_1+\gamma_2+2}\norm{f}_{L^p}$.
Thus, to prove \rf{eqOperatorsBiggerHom}, it suffices to see that for $f\in W^{n,p}(\Omega)$ we have
$$\norm{\nabla^{n+{\gamma_1+\gamma_2+2}} T^\gamma_\Omega f}_{L^p(\Omega)}\leq C_\epsilon |\gamma|^{n+\gamma_1+\gamma_2+2}\left( \norm{N}_{B^{n-1/p}_{p,p}(\partial\Omega)}+(1+\epsilon)^{|\gamma|}\right) \norm{f}_{W^{n,p}(\Omega)}.$$

Since we have shown  \rf{eqOperatorsBiggerHom} for operators with $\gamma_1+\gamma_2+2=0$, it is enough to check that for any $\nu\in\N_0^2$ with $|\nu|={\gamma_1+\gamma_2+2}$ and $z\in\Omega$, we have
 \begin{equation}\label{eqDerivateTgammaF}
 D^\nu T^\gamma_\Omega f(z) =
 	\begin{cases} 
		C_n \chi_\Omega(z) f(z) & \mbox{if } \gamma=(|\nu|-1,-1) \mbox{ and } \nu=(|\nu|,0) \\
		 & \quad \mbox{or } \gamma=(-1,|\nu|-1) \mbox{ and } \nu=(0,|\nu|), \\
		0 & \mbox{if } \nu_1>\gamma_1\geq 0 \mbox{ or }\nu_2>\gamma_2 \geq 0 \mbox{ except in the previous case,}\\
		C_{\nu, \gamma} T^{\gamma-\nu}_\Omega f(z) & \mbox{otherwise},
	 \end{cases} 
\end{equation}
with $|C_n|,|C_{\nu,\gamma}|\lesssim (|\nu|+|\gamma|)^{|\nu|}$.

To prove this statement, take $\alpha\leq \nu-(1,0)$, and note that the partial derivative is
\begin{align*}
\partial T^{\gamma-\alpha}_\Omega f(z)
	& =\frac{\partial_x T^{\gamma-\alpha}_\Omega f(z)-i\partial_y T^{\gamma-\alpha}_\Omega f(z)}{2}\\
	& =\lim_{h \to 0} \frac{T^{\gamma-\alpha}_\Omega (f-f(z)) (z+h)-T^{\gamma-\alpha}_\Omega (f-f(z))(z)}{2h}\\
	& \quad + \lim_{h \to 0} \frac{T^{\gamma-\alpha}_\Omega (f-f(z)) (z+i\, h)-T^{\gamma-\alpha}_\Omega (f-f(z))(z)}{2ih} +\partial T^{\gamma-\alpha}_\Omega 1(z) f(z) \\
	& =:\squared{I}+\squared{II}+\squared{III},
\end{align*}
where $h$ is assumed to be real. Now, the principal value is not needed because ${\gamma_1-\alpha_1+\gamma_2-\alpha_2}>-2$, so
$$\squared{I}=\lim_{h\to 0} \int_{\Omega} \frac{\left((z+h-w)^{\gamma-\alpha}- (z-w)^{\gamma-\alpha}\right)[f(w)-f(z)]}{2h} \, dm(w) .$$
Moreover,  since $f\in C^{0,\sigma}$ for a certain $\sigma>0$ by the Sobolev Embedding Theorem, we get
$$\lim_{h\to 0} \int_{B(z,2|h|)}\frac{\left(|z+h-w|^{\gamma-\alpha} + |z-w|^{\gamma-\alpha}\right)|f(w)-f(z)|}{2h} \, dm(w) =0 .$$
On the other hand, using the Taylor expansion of order two of $(z-w+\cdot)^{\gamma-\alpha}$ around $0$, there exists $\varepsilon=\varepsilon(h,w,z)$ with $|\varepsilon|<h$ such that
$$\squared{I}=\lim_{h\to 0} \int_{\Omega\setminus B(z,2|h|)} \left(\frac{\partial_x (z-w+\cdot)^{\gamma-\alpha}(0)}{2} +\frac{ \partial_x^2 (z-w+\cdot)^{\gamma-\alpha}(\varepsilon) h}{2} \right)(f(w)-f(z)) \, dm(w).$$
Arguing analogously for $\squared{II}$, we get that  
\begin{align*}
\squared{I}+\squared{II}
	& =\lim_{h\to 0} \int_{\Omega\setminus B(z,2|h|)} (\gamma_1-\alpha_1)(z-w)^{\gamma-\alpha-(1-0)} f(w) \, dm(w)\\
	&\quad - \lim_{h\to 0} \int_{\Omega\setminus B(z,2|h|)} (\gamma_1-\alpha_1)(z-w)^{\gamma-\alpha-(1-0)} \, dm(w) f(z)
\end{align*}
 (when taking limits, the Taylor remainder vanishes by the H\"older continuity of $f$).
If $\gamma_1-\alpha_1=0$ then this part is null and $\squared{III}$ will be also null unless $\gamma_2-\alpha_2=-1$ by Lemma \ref{remDiffCharacteristic}. Otherwise, the last term coincides with $\squared{III}$ and they cancel out. By induction, we get \rf{eqDerivateTgammaF}. 
\end{proof}

\begin{appendices}
\section{Appendix}
We prove the following:
\begin{lemma}
Let $n\geq 1$, $\delta, R>0$,  let $\Omega$ be a bounded $(\delta,R)-C^{n-1,1}$ domain and let $\{\mathcal{Q}_k\}_{k=1}^M$ be a collection of $R$-windows  such that $\left\{\frac{1}{20} \mathcal{Q}_k\right\}_{k}$ cover the boundary of $\Omega$ and $\left\{\frac{1}{40}  \mathcal{Q}_k\right\}_{k}$ are disjoint. Let $\{A_k\}_k$ be the parameterizations of the boundary associated to each window. Then, for any $1<p<\infty$
\begin{equation}\label{eqBoundedNormBoundary}
\sum_{k=1}^M\sum_{I\in\mathcal{D}: I\subset \frac16 I_{R}}\frac{\beta_{(n)}( A_k,I)^p}{\ell(I)^{n\,p-2}} \lesssim \sum_{k=1}^M\norm{A_k}_{\dot B^{n+1-1/p}_{p,p}(\frac13 I_{R})}^p \lesssim  \norm{N}_{B^{n-1/p}_{p,p}(\partial\Omega)}^p,
\end{equation}
where $I_R$ stands for the interval $(-R,R)$. The constants depend on $n$, $p$, $\delta$, $R$ and the length of the boundary $\mathcal{H}^1(\partial\Omega)$.
\end{lemma}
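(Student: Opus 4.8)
The argument splits along the two displayed inequalities: the first is a localization of Dorronsoro's identity \rf{eqNormBetas}, the second transfers the Besov regularity of $N$ onto the local graph functions $A_k$ and then reassembles the pieces. For $n=1$ both are already contained in \cite[Lemma 3.3]{CruzTolsa}, so the interest is in $n\ge 2$, but the scheme below works uniformly.

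\emph{The first inequality.} The key point is that $\beta_{(n)}(A_k,I)$ (and the polynomial $\mathbf R^n_{3I}A_k$) depends only on $A_k|_{3I}$, together with the elementary fact that a dyadic interval $I\subset\tfrac16 I_R$ satisfies $\ell(I)<R/6$, hence $3I\subset\tfrac13 I_R$. Fix $k$ and pick, as permitted by \rf{eqBesovRestricted}, an extension $F_k$ of $A_k|_{\frac13 I_R}$ with $\norm{F_k}_{\dot B^{n+1-1/p}_{p,p}}\le 2\norm{A_k}_{\dot B^{n+1-1/p}_{p,p}(\frac13 I_R)}$. Then $\beta_{(n)}(A_k,I)=\beta_{(n)}(F_k,I)$ for every dyadic $I\subset\tfrac16 I_R$, so the inner sum is dominated by $\sum_{I\in\mathcal D}\beta_{(n)}(F_k,I)^p\,\ell(I)^{2-np}$, which by \rf{eqNormBetas}, applied with $s=n+1-1/p$ (so $[s]=n$, matching the degree of the approximating polynomials), is $\approx\norm{F_k}_{\dot B^{n+1-1/p}_{p,p}}^p\lesssim\norm{A_k}_{\dot B^{n+1-1/p}_{p,p}(\frac13 I_R)}^p$. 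Summing over the $M$ windows gives the first inequality.

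\emph{The second inequality.} The plan is to bound, window by window, $\norm{A_k}_{\dot B^{n+1-1/p}_{p,p}(\frac13 I_R)}$ by $\norm{N\circ z}_{B^{n-1/p}_{p,p}(J_k)}$ plus lower-order terms, $J_k$ being the arc-length parameter interval of the $k$-th window, and then to sum. This is done along the following chain. First, differentiation and integration identify $\dot B^{s+1}_{p,p}$ with $\dot B^{s}_{p,p}$ with equivalent seminorms, and this passes to the restricted spaces (extend $A_k'$ near-optimally in $\dot B^{n-1/p}_{p,p}$ and take an antiderivative, the additive constant being irrelevant for the homogeneous seminorm), so $\norm{A_k}_{\dot B^{n+1-1/p}_{p,p}(\frac13 I_R)}\lesssim\norm{A_k'}_{\dot B^{n-1/p}_{p,p}(\frac13 I_R)}\le\norm{A_k'}_{B^{n-1/p}_{p,p}(\frac13 I_R)}$. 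Next, along the graph $x\mapsto(x,A_k(x))$ the first coordinate of the outward normal equals $g(A_k'(x))$ with $g(t)=t/\sqrt{1+t^2}$, a smooth diffeomorphism of $\R$ onto $(-1,1)$ fixing the origin; since $\norm{A_k'}_{L^\infty}\le\delta$ — here one uses the a priori $C^{n-1,1}$ bound, not a Besov embedding — the values of $g(A_k')$ remain in a fixed compact subset of $(-1,1)$ on which $g^{-1}$ is smooth, so $A_k'=g^{-1}(g(A_k'))$ and the composition theorem for smooth functions on $B^{n-1/p}_{p,p}\cap L^\infty$ (see \cite{RunstSickel}) gives $\norm{A_k'}_{B^{n-1/p}_{p,p}(\frac13 I_R)}\lesssim_{\delta,R}\norm{g(A_k')}_{B^{n-1/p}_{p,p}(\frac13 I_R)}+C_{\delta,R}$. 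Finally, reparametrizing by arc length is composition with a $C^{n-1,1}$ change of variables whose derivative is $\sqrt{1+(A_k')^2}\in C^{n-2,1}$, bounded above and below; since $n-1/p<n$ such a diffeomorphism acts boundedly on $B^{n-1/p}_{p,p}$, whence $\norm{g(A_k')}_{B^{n-1/p}_{p,p}(\frac13 I_R)}\lesssim\norm{N\circ z}_{B^{n-1/p}_{p,p}(J_k)}$.

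It remains to sum over $k$. Because $\{\tfrac1{40}\mathcal Q_k\}$ are disjoint and $\{\tfrac1{20}\mathcal Q_k\}$ cover $\partial\Omega$, the arcs $J_k$ have bounded overlap and $M$ is controlled by $\delta$, $R$ and $\mathcal H^1(\partial\Omega)$; the localization property of $B^{s}_{p,p}$, $s>0$ (immediate from the difference characterization \rf{eqBesovDiff}, or from the dyadic $\beta$-characterization restricted to intervals), then yields $\sum_k\norm{N\circ z}_{B^{n-1/p}_{p,p}(J_k)}^p\lesssim\norm{N\circ z}_{B^{n-1/p}_{p,p}}^p\approx\norm{N}_{B^{n-1/p}_{p,p}(\partial\Omega)}^p$, while the $M$ additive constants produced in the composition step sum to $\lesssim_{\delta,R}\mathcal H^1(\partial\Omega)=\norm{N\circ z}_{L^p}^p\le\norm{N}_{B^{n-1/p}_{p,p}(\partial\Omega)}^p$. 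Chaining everything produces the second inequality. The hard part is the second and third steps of the chain: they require composition operators on Besov spaces, once with a fixed smooth function and once with a diffeomorphism of merely finite smoothness, and one must verify the admissible parameter ranges — this is exactly what the hypotheses here provide, namely $0<n-1/p<n$ and uniform $L^\infty$ bounds — on top of the purely geometric bookkeeping of the constants $\tfrac1{20},\tfrac1{40},\tfrac16,\tfrac13$ that makes each window a graph over an interval of comparable length.
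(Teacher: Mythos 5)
Your argument is correct, and for the substantive (second) inequality it follows a genuinely different route from the paper. The skeleton is shared: both proofs localize Dorronsoro's characterization \rf{eqNormBetas} for the first inequality (the paper simply calls it ``immediate''; your extension argument via \rf{eqBesovRestricted} is the right way to spell it out, modulo the harmless bookkeeping that $3I$ may only be guaranteed to sit in $\tfrac12 I_R$ rather than $\tfrac13 I_R$, an issue the paper's own statement shares), and both pass through the graph-normal $N_k=(1+A_k'^2)^{-1/2}(A_k',-1)$ and the arc-length change of variables $\tau_k$. The difference is in how the transfer $A_k\leftrightarrow N$ is executed. The paper lifts all the way down to the fractional smoothness $\{s\}=1-1/p\in(0,1)$, where only first differences are needed, and then controls $\Delta_h A_k^{(n)}$ by $\Delta_h N_k^{(n-1)}$ through explicit Leibniz/Fa\`a di Bruno expansions of $g_k$, $\widetilde g_k$ and $\tau_k$, with Mean Value Theorem error terms of size $|h|/R^n$ --- elementary, self-contained, and with completely explicit dependence on $\delta$ and $R$. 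You instead lift only one derivative and delegate the two hard steps to known theorems: the superposition operator $u\mapsto f\circ u$ on $B^s_{p,p}\cap L^\infty$ for smooth $f$ (Runst--Sickel), used to invert $A_k'\mapsto g(A_k')=N_{k,1}$ on the compact range forced by $\norm{A_k'}_{L^\infty}\le\delta$, and the invariance of $B^s_{p,p}$ under $C^{n-1,1}$-diffeomorphisms in the range $s=n-1/p<n$, used to undo the arc-length reparametrization. You correctly identify these as the delicate points and verify the admissible parameter ranges; what your version buys is brevity and conceptual clarity, at the price of importing two nontrivial results whose implicit constants (depending on $\norm{A_k'}_{L^\infty}$ and $\norm{\tau_k}_{C^{n-1,1}}$, both controlled by $\delta$ and $R$) must be tracked to preserve the quantitative form of the lemma --- which your argument does.
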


Note that $M\approx \frac{\mathcal{H}^1(\partial\Omega)}{R}$.

\begin{proof}
By \rf{eqNormBetas} the first estimate in \rf{eqBoundedNormBoundary} is immediate.


Let us write $s:=n-\frac1p$ and $\{s\}:=1-\frac1p$. Given $t\in\R$, we write $I_t$ for the interval $t I_R$. To prove the second estimate in \rf{eqBoundedNormBoundary}, using the expression \rf{eqBesovDiff} to express the Besov norm in terms of differences together with the fact that 
$$\norm{A_k}^p_{\dot B^{s+1}_{p,p}(I_{1/3})}\approx \norm{A_k^{(n)}}^p_{\dot B^{\{s\}}_{p,p}(I_{1/3})}$$ 
(that is, the so-called lifting property, see  \cite[Theorem 5.2.3/1]{TriebelTheory}) and using an appropriate cut-off function $\chi_{I_{1/3}}\leq\varphi\leq\chi_{I_{5/12}}$  we get $ \norm{A_k^{(n)}}_{\dot B^{\{s\}}_{p,p}(I_{1/3})}\leq  \norm{\varphi A_k^{(n)}}_{\dot B^{\{s\}}_{p,p}}$, so
\begin{align}\label{eqBesovNormA}
\norm{ A_k}^p_{\dot B^{s+1}_{p,p}(I_{1/3})}
\nonumber	& \lesssim\int_{I_{1/2}} \int_{I_{1/2}}  |\varphi(y)|^p \frac{\left|A_k^{(n)}(x)-A_k^{(n)}(y)\right|^p}{|x-y|^{\{s\}p+1}} dydx \\
\nonumber	& +	\int_{I_{1/2}} \int_{I_{1/2}}  \left|A_k^{(n)}(x)\right|^p \frac{|\varphi(x)-\varphi(y)|^p}{|x-y|^{\{s\}p+1}} dydx + 2\int_{I_{5/12}}\int_{I_{1/2}^c} \frac{ |A_k^{(n)}(x)|}{|x-y|^{\{s\}p+1}} dy dx\\
			& \lesssim \int_{I_{1/2}} \int_{I_{1/2}} \frac{\left|A_k^{(n)}(x)-A_k^{(n)}(y)\right|^p}{|x-y|^{\{s\}p+1}} dy \, dx + 1.
\end{align}
 Note that the error terms are absorbed by an additive constant which depends on the $C^{n-1,1}$ constants of the parameterization $A_k$, that is, on $\delta$ and $R$,  uniformly bounded by hypothesis. Next, using \rf{eqHomogeneous}, the lifting property again and some computations, one can express the norm of the normal vector as
 \begin{align}\label{eqReductionBoundaryNorm}
\norm{N}_{B^s_{p,p}(\partial\Omega)}^p 
	& \approx 1 + \int_I \int_{2I} \frac{|\Delta_h (N\circ z)^{(n-1)} (t)|^p}{|h|^{\{s\}p}}  \frac{dh}{|h|}\, dt,
\end{align}
where $I$ is the interval of length $\mathcal{H}^1(\partial\Omega)$ centered at the origin.

Finally, to compare \rf{eqBesovNormA} and \rf{eqReductionBoundaryNorm}, we will use  the functions
$$N_k(x):=\frac{1}{\sqrt{1+A_k'(x)^2}} (A_k'(x),-1)$$
(that is, for each $k$ we take the normal vector to the graph of the $k$-th parameterization of the boundary at $(x, A_k(x))$), to make an intermediate step. Namely, we will show that
\begin{equation}\label{eqTwoSteps}
\sum_{k=1}^M\norm{ A_k}^p_{\dot B^{s+1}_{p,p}(I_{1/3})}\lesssim \sum_{k=1}^M \int_{I_{1/2}} \int_{I_{1/2}} \frac{\left|N_k^{(n-1)}(x)-N_k^{(n-1)}(y)\right|^p}{|x-y|^{\{s\}p+1}} dy \, dx + 1 \lesssim\norm{N}^p_{B^{s}_{p,p}(\partial\Omega)}.
\end{equation}

We begin by the first inequality. Let us write fix a window $\mathcal{Q}_k$. By \rf{eqBesovNormA}, it only remains to check that 
\begin{equation*}
\circled{I}:=\int_{I_{1/2}} \int_{I_{1/2}-x} \frac{|\Delta_h A_k^{(n)}(x)|^p}{|h|^{\{s\}p}} \frac{dh}{|h|} \, dx\lesssim \int_{I_{1/2}} \int_{I_{1/2}-x} \frac{\left|\Delta_h N_k^{(n-1)}(x)\right|^p}{|h|^{\{s\}p}} \frac{dh}{|h|} \, dx + 1.
\end{equation*}
To do so, we need to relate $\Delta_h A_k^{(n)}(x)$ and $\Delta_h N_k^{(n-1)}(x)$. We can write
$$N_k(x)=(N_{k,1}(x), N_{k,2}(x))=g_k(x)(A_k'(x),-1),$$
with
\begin{align*}
g_k(x)		& =\frac{1}{\sqrt{1+A_k'(x)^2}} \mbox{ and, thus, }\\
g_k'(x)		& =-\frac{A_k''(x)A_k'(x)}{\sqrt{1+A_k'(x)^2}^3}=-A_k''(x)A_k'(x)g_k(x)^3,\\
& \cdots.
\end{align*}
First we note the trivial pointwise bounds of the derivatives of $g_k$. The first two bounds are obvious and the rest of them can be deduced by induction, 
\begin{align*}
\nonumber |g_k(x)|	& =\left|\frac{1}{\sqrt{1+A_k'(x)^2}} \right|\leq 1,\\
\nonumber |g_k'(x)|	& =\left|A_k''(x)A_k'(x)g_k(x)^3\right|\leq \frac{\delta^2}{R},\\
\nonumber & \cdots,\\
|g_k^{(j)}(x)|	& \leq \frac{C_\delta}{R^j} \mbox{ for all $j<n$.}
\end{align*}
Analogously, we have similar bounds for the multiplicative inverse of $g_k$, $\widetilde{g}_k=\frac{1}{g_k}$,  
\begin{align*}
\nonumber \left|\widetilde{g}_k(x)\right|	& \leq \sqrt{1+\delta^2} ,\\
\nonumber \left|\widetilde{g}_k'(x)\right|	& = |g_k(x)A_k'(x)A_k''(x)| \leq \frac{\delta^2}{R},\\
\nonumber & \cdots,\\
\left|\widetilde{g}_k^{(j)}(x)\right|	& \leq \frac{C_\delta}{R^j}  \mbox{ for every $j<n$.}
\end{align*}
Thus, for the $k$-th window normal vector
\begin{align*}
|N_{k,2}^{(j)}(x)|	& =\left|g_k^{(j)}(x)\right|\leq\frac{C_\delta}{R^j}  \mbox{\quad\quad for all $j<n$ and}\\
|N_{k,1}^{(j)}(x)|	& =\left|\sum_{i=0}^{j}{j\choose i} A_k^{(i+1)}(x)g_k^{(j-i)}(x)\right| \lesssim_{\delta,j} \sum_{i=0}^j \frac{1}{R^{i}}\frac{1}{R^{j-i}}\approx \frac{1}{R^j}  \mbox{\quad\quad for all $j<n$.}
\end{align*}
Summing up, we have that
\begin{equation}\label{eqcotesdirectes}
\norm{A_k^{(j+1)}}_{L^\infty},\norm{g_k^{(j)}}_{L^\infty},\norm{\widetilde{g}_k^{(j)}}_{L^\infty},\norm{N_k^{(j)}}_{L^\infty} \lesssim_{\delta,n}\frac{1}{R^j}\mbox{\quad\quad for $j<n$.}
\end{equation}
Therefore, using the Mean Value Theorem one gets
\begin{equation}\label{eqcotesdelta}
|\Delta_h A_k^{(j)}(x)|,|\Delta_hg_k^{(j-1)}(x)|, |\Delta_h\widetilde{g}_k^{(j-1)}(x)|, |\Delta_hN_k^{(j-1)}(x)|\lesssim \frac{|h|}{R^j}\mbox{\quad\quad for $j<n$.}
\end{equation}

Now we want to control $|\Delta_h A_k^{(n)}(x)|$ by an expression in terms of the differences of the derivatives of the normal vector, with $x, x+h\in I_{1/2}$.
We have that
\begin{align*}
N_{k,1}^{(n-1)}(x)=\sum_{i=0}^{n-1} {n-1 \choose i }A_k^{(i+1)}(x)g_k^{(n-1-i)}(x).
\end{align*}
Thus, solving for $A_k^{(n)}(x)$ we get 
$$A_k^{(n)}(x)=\frac{N_{k,1}^{(n-1)}(x)-\sum_{i=0}^{n-2} {n-1 \choose i}A_k^{(i+1)}(x)g_k^{(n-1-i)}(x)}{g_k(x)},$$
and taking differences
\begin{align}\label{eqacodtaDeltaa}
|\Delta_h A_k^{(n)}(x)|
	& \lesssim \left|\Delta_h\left(N_{k,1}^{(n-1)}\widetilde{g}_k\right)(x)\right|+\sum_{i=0}^{n-2} \left|\Delta_h\left(A_k^{(i+1)}g_k^{(n-1-i)}\widetilde{g}_k\right)(x)\right|.
\end{align}

On one hand, using \rf{eqcotesdirectes} and \rf{eqcotesdelta} we have that
\begin{align*}
\left|\Delta_h\left(N_{k,1}^{(n-1)}\widetilde{g}_k\right)(x)\right|
	&\leq \norm{\widetilde{g}_k}_{L^\infty}\left|\Delta_h N_{k,1}^{(n-1)} (x)\right|+\norm{N_{k,1}^{(n-1)}}_{L^\infty}\left|\Delta_h{\widetilde{g}_k}(x)\right|\\
	&\lesssim \left|\Delta_h N_{k,1}^{(n-1)} (x)\right|+\frac{1}{R^{n-1}}\frac{|h|}{R}.
\end{align*}
On the other hand, if we consider $0<i\leq n-2$, we obtain analogously
\begin{align*}
\left|\Delta_h\left(A_k^{(i+1)}g_k^{(n-1-i)}\widetilde{g}_k\right)(x)\right|
	&\lesssim \frac{1}{R^{n-1-i}} \left|\Delta_h A_k^{(i+1)}(x)\right|+\frac{1}{R^{i}}\left|\Delta_h g_k^{(n-1-i)}(x)\right|+\frac{1}{R^{n-1}}\left|\Delta_h{\widetilde{g}_k}(x)\right|\\
	& \lesssim \frac{1}{R^{n-1-i}} \frac{|h|}{R^{i+1}}+\frac{1}{R^{i}}\frac{|h|}{R^{(n-i)}}+\frac{1}{R^{n-1}}\frac{|h|}{R}.
\end{align*}
When $i=0$, instead, using that $N_{k,2}^{(n-1)}(x)=-g_k^{(n-1)}(x)$, we obtain that
\begin{align*}
\left|\Delta_h\left(A_k' g_k^{(n-1)}\widetilde{g}_k\right)(x)\right|
	&\lesssim \frac{1}{R^{n-1}} \left|\Delta_h A_k'(x)\right|+\left|\Delta_h g_k^{(n-1)}(x)\right|+\frac{1}{R^{n-1}}\left|\Delta_h{\widetilde{g}_k}(x)\right|\\
	& \lesssim \frac{1}{R^{n-1}} \frac{|h|}{R}+\left|\Delta_h N_{k,2}^{(n-1)}(x)\right| + \frac{1}{R^{n-1}} \frac{|h|}{R}.
\end{align*}
Back to \rf{eqacodtaDeltaa}, we have deduced that
\begin{align*}
|\Delta_h A_k^{(n)}(x)|
	& \lesssim \left|\Delta_h N_k^{(n-1)}(x)\right| +\frac{|h|}{R^{n}}.
\end{align*}

Applying this result,  we obtain that
\begin{align}\label{eqcotaII}
\nonumber \circled{I}
	& \lesssim  \int_{I_{1/2}} \int_{I_{1/2}-x} \frac{\left| \Delta_h N_k^{(n-1)}(x)\right|^p}{|h|^{\{s\}p+1}} dh \, dx + \frac{1}{R^{np}} \int_{-R}^R \frac{\left| h\right|^p}{|h|^{\{s\}p+1}} dh  \int_{I_{1/2}} dx \\
	& \lesssim \int_{I_{1/2}} \int_{I_{1/2}} \frac{\left| N_k^{(n-1)}(x)-N_k^{(n-1)}(y)\right|^p}{|x-y|^{\{s\}p+1}} dy \, dx + R^{1-sp}.
\end{align}
The first inequality in \rf{eqTwoSteps} is obtained summing in $k$.

To prove the second one, note that $t=\tau_k(x)=\int_0^x \widetilde{g}_k$ is the arc parameter of the curve, since
$$\frac{dx}{dt}=\frac{1}{\widetilde{g}_k(x)}=\frac{1}{\sqrt{1+A_k'(x)^2}}.$$
Thus, we have that $\widetilde{N}_k(t):=N_k(\tau_k^{-1}(t))$ is the normal vector (to the graph of the $k$-th parameterization) parameterized by the arc. Of course, we have that $N_k(x)=\widetilde{N}_k(\tau_k(x))$. Therefore,
$$N_k'(x)=\widetilde{N}_k'(\tau_k(x))\tau_k'(x)=\widetilde{N}_k'(\tau_k(x))\widetilde{g}_k(x)$$
and, by induction, for $j\leq n-1$ we get
\begin{equation}\label{eqdescomponN0}
N_k^{(j)}(x)=\sum_{i=1}^{j}\widetilde{N}_k^{(i)}(\tau_k(x))\sum_{\substack{\alpha\in \N^{i}\\|\alpha|=j-i}}C_\alpha \prod_{l=1}^{i}\widetilde{g}_k^{(\alpha_l)}( x).
\end{equation}
 Solving this equation for $\widetilde{N}_k^{(j)}$ and using \rf{eqcotesdirectes}, for $j\leq n-1$
we have that
\begin{equation}\label{eqcotadirectaN}
\norm{\widetilde{N}_k^{(j)}}_{L^\infty(\tau_k(I_R))} \leq  \frac{1}{R^j}.
\end{equation}
Taking  $t=\tau_k(x)$ and $\widetilde{h}=\tau_k(y)-\tau_k(x)$, and applying \rf{eqdescomponN0}, we get
\begin{align*}
|N_k^{(n-1)}(y)-N_k^{(n-1)}(x)|
	&  \leq |\Delta_{\widetilde{h}} \widetilde{N}_k^{(n-1)}(t)| \norm{\widetilde{g}_k}_{L^\infty}^{n-1}\\
	& \quad +\sum_{j=1}^{n-2}|\Delta_{\widetilde{h}} \widetilde{N}_k^{(j)}(t)| \sum_{\substack{\alpha\in \N^{j}\\|\alpha|=n-1-j}}C_\alpha \prod_{i=1}^{j}\norm{\widetilde{g}_k^{(\alpha_i)}}_{L^\infty}\\
	& \quad + \sum_{j=1}^{n-1}\norm{\widetilde{N}_k^{(j)}}_{L^\infty} \sum_{\substack{\alpha\in \N^{j}\\|\alpha|=n-1-j}}C_\alpha  \sum_{i=1}^{j} \prod_{l \neq i} \left|\widetilde{g}_k^{(\alpha_i)}(x)-\widetilde{g}_k^{(\alpha_i)}(y)\right| \norm{\widetilde{g}_k^{(\alpha_l)}}_{L^\infty}.
\end{align*}
Using \rf{eqcotesdirectes}, \rf{eqcotesdelta} and \rf{eqcotadirectaN} we get
\begin{align*}
 |\Delta_{\widetilde{h}} \widetilde{N}_k^{(n-1)}(t)| \norm{\widetilde{g}_k}_{L^\infty}^{n-1}
	 \lesssim |\Delta_{\widetilde{h}} \widetilde{N}_k^{(n-1)}(t)| ,
\end{align*}
for all $j \leq n-2$ and $|\alpha|=n-1-j$ we get
\begin{align*}
|\Delta_{\widetilde{h}} \widetilde{N}_k^{(j)}(t)| \prod_{i=1}^{j}\norm{\widetilde{g}_k^{(\alpha_i)}}_{L^\infty}
	 \lesssim |{\widetilde{h}}| \norm{\widetilde{N}_k^{(j+1)}}_{L^\infty} \prod_{i=1}^{j} \frac{1}{R^{\alpha_i}} \lesssim \frac{|{\widetilde{h}}|}{R^{j+1+|\alpha|}}=\frac{|{\widetilde{h}}|}{R^n}
\end{align*}
and, for all $ j \leq n-1, |\alpha|=n-1-j$, we get
\begin{align*}
& \norm{\widetilde{N}_k^{(j)}}_{L^\infty} \sum_{i=1}^{j} \prod_{l \neq i} |\widetilde{g}_k^{(\alpha_i)}(x)-\widetilde{g}_k^{(\alpha_i)}(y)| \norm{\widetilde{g}_k^{(\alpha_l)}}_{L^\infty}
	 \lesssim \frac{1}{R^j}\frac{|x-y|}{R^{\alpha_i+1}}\frac{1}{R^{|\alpha|-\alpha_i}}\approx\frac{|{\widetilde{h}}|}{R^n}.
\end{align*}
Thus, 
$$|N_k^{(n-1)}(x)-N_k^{(n-1)}(y)|\lesssim  |\Delta_{\widetilde{h}} \widetilde{N}_k^{(n-1)}(t)|+\frac{|{\widetilde{h}}|}{R^n}.$$ 
Therefore, using the bilipschitz change of variables $t=\tau_k(x)$ and $\widetilde{h}=\tau_k(y)-\tau_k(x)$ in \rf{eqcotaII}, we have that
\begin{align}\label{eqcotaIIn}
\nonumber\circled{I} 
	& \lesssim \int_{I_{1/2}} \int_{I_{1/2}} \frac{\left| N_k^{(n-1)}(x)-N_k^{(n-1)}(y)\right|^p}{|x-y|^{\{s\}p+1}} dy \, dx+ R^{1-sp} \\
	& \lesssim \int_{\tau_k(I_{1/2})}  \int_{\tau_k(I_{1/2})-t} \left(\frac{ |\Delta_{\widetilde{h}} \widetilde{N}_k^{(n-1)}(t)|^p}{|{\widetilde{h}}|^{\{s\}p+1}}+\frac{|{\widetilde{h}}|^p}{R^{np}|{\widetilde{h}}|^{\{s\}p+1}}\right) d{\widetilde{h}}\, dt+ R^{1-sp}.
\end{align}
Taking sums on $1\leq k\leq M$ and using \rf{eqBesovNormA}, \rf{eqcotaII} and \rf{eqcotaIIn} we get
\begin{align*}
 \sum_{k=1}^M\norm{ A_k}^p_{\dot B^{s+1}_{p,p}(I_{1/3})}
 	&  \lesssim  \sum_{k=1}^M \left( \int_{\tau_k(I_{1/2})}  \int_{\tau_k(I_{1/2})-t} \frac{ |\Delta_{\widetilde{h}} \widetilde{N}_k^{(n-1)}(t)|^p}{|{\widetilde{h}}|^{\{s\}p+1}} d{\widetilde{h}}\, dt + R^{1-sp} \right) + R^{1-sp}.
\end{align*}
According to our definitions, for each $k$ and $t\in \tau_k(I_{1/2})$ we have that $\widetilde{N}_k(t)$ coincides with a fixed rotation of $N\circ z(t+z^{-1}(z_k))$ where $z^{-1}(z_k)$ is assumed to be chosen in $I$. That is, $\widetilde{N}_k$ coincides with a fixed rotation of $N:\partial\Omega\to S^1$ parametrized by the arc $z:2I\to \partial\Omega$ for values close to $z^{-1}(z_k)$ and pre-composed with a translation.  Namely, $\widetilde{N}_k^{(n-1)}(t)=e^{it_k}(N\circ z)^{(n-1)}(t+z^{-1}(z_k))$ and
\begin{align*}
\sum_{k=1}^M\norm{ A_k}^p_{\dot B^{s+1}_{p,p}(I_{1/3})}
	&  \lesssim  \sum_{k=1}^M  \int_{\tau_k(I_{1/2})}  \int_{\tau_k(I_{1/2})-t} \frac{ |\Delta_{\widetilde{h}}(N\circ z)^{(n-1)}(t+z^{-1}(z_k))|^p}{|{\widetilde{h}}|^{\{s\}p+1}} d{\widetilde{h}}\, dt  + M R^{1-sp}.
\end{align*}
Changing variables, we get
\begin{align*}
\sum_{k=1}^M\norm{ A_k}^p_{\dot B^{s+1}_{p,p}(I_{1/3})}
	& \lesssim 1 + \int_I \int_{2I} \frac{|\Delta_h (N\circ z)^{(n-1)} (t)|^p}{|h|^{\{s\}p}}  \frac{dh}{|h|}\, dt\approx \norm{N}_{B^s_{p,p}(\partial\Omega)}^p .
\end{align*}
\end{proof}

\begin{remark}
Arguing analogously one can show that
$$\norm{N}^p_{B^{s}_{p,p}(\partial\Omega)} \lesssim \sum_{k=1}^N \norm{ {A}_k}^p_{\dot B^{s+1}_{p,p}(I_1)}+1.$$
By \rf{eqTwoSteps} and \cite[Theorem 3]{Marschall}, we have that this condition is equivalent to $N$ being in the trace space of $W^{n,p}(\Omega)$. 
%
\end{remark}

\end{appendices}

\renewcommand{\abstractname}{Acknowledgements}
\begin{abstract}
The present work was developed during the author's doctoral studies under the tuition of Xavier Tolsa. 
The author was funded by the European Research
Council under the European Union's Seventh Framework Programme (FP7/2007-2013) /
ERC Grant agreement 320501. Also, partially supported by grants 2014-SGR-75 (Generalitat de Catalunya), MTM-2010-16232 and MTM-2013-44304-P (Spanish government) and by a FI-DGR grant from the Generalitat de Catalunya, (2014FI-B2 00107).\end{abstract}

\bibliography{../../../bibtex/Llibres}

\newcommand{\etalchar}[1]{$^{#1}$}
\begin{thebibliography}{CFM{\etalchar{+}}09}

\bibitem[AF03]{AdamsFournier}
Robert~A. Adams and John J.~F. Fournier.
\newblock {\em Sobolev spaces}.
\newblock Academic Press, 2\textsuperscript{nd} edition, 2003.

\bibitem[AIM09]{AstalaIwaniecMartin}
Kari Astala, Tadeusz Iwaniec, and Gaven Martin.
\newblock {\em Elliptic partial differential equations and quasiconformal
  mappings in the plane}, volume~48 of {\em Princeton Mathematical Series}.
\newblock Princeton University Press, 2009.

\bibitem[AIS01]{AstalaIwaniecSaksman}
Kari Astala, Tadeusz Iwaniec, and Eero Saksman.
\newblock Beltrami operators in the plane.
\newblock {\em Duke Math. J.}, 107(1):27--56, 2001.

\bibitem[Ast94]{Astala}
Kari Astala.
\newblock Area distortion of quasiconformal mappings.
\newblock {\em Acta Math.}, 173(1):37--60, 1994.

\bibitem[CF12]{CittiFerrari}
Giovanna Citti and Fausto Ferrari.
\newblock A sharp regularity result of solutions of a transmission problem.
\newblock {\em Proc. Amer. Math. Soc.}, 140(2):615--620, 2012.

\bibitem[CFM{\etalchar{+}}09]{ClopFaracoMateuOrobitgZhong}
Albert Clop, Daniel Faraco, Joan Mateu, Joan Orobitg, and Xiao Zhong.
\newblock Beltrami equations with coefficient in the {S}obolev space {$W^{1,
  p}$}.
\newblock {\em Publ. Mat.}, 53(1):197--230, 2009.

\bibitem[CFR10]{ClopFaracoRuiz}
Albert Clop, Daniel Faraco, and Alberto Ruiz.
\newblock Stability of {C}alder{\'o}n's inverse conductivity problem in the
  plane for discontinuous conductivities.
\newblock {\em Inverse Probl. Imaging}, 4(1):49--91, 2010.

\bibitem[CMO13]{CruzMateuOrobitg}
Victor Cruz, Joan Mateu, and Joan Orobitg.
\newblock Beltrami equation with coefficient in {S}obolev and {B}esov spaces.
\newblock {\em Canad. J. Math.}, 65(1):1217--1235, 2013.

\bibitem[CT12]{CruzTolsa}
Victor Cruz and Xavier Tolsa.
\newblock Smoothness of the {B}eurling transform in {L}ipschitz domains.
\newblock {\em J. Funct. Anal.}, 262(10):4423--4457, 2012.

\bibitem[Dor85]{Dorronsoro}
Jos{\'e}~R. Dorronsoro.
\newblock Mean oscillation and {B}esov spaces.
\newblock {\em Canad. Math. Bull.}, 28(4):474--480, 1985.

\bibitem[EJ92]{EvardJafari}
Jean-Claude Evard and Farhad Jafari.
\newblock A complex {R}olle's theorem.
\newblock {\em Amer. Math. Monthly}, 99(9):858--861, 1992.

\bibitem[Eva98]{Evans}
Lawrance~C. Evans.
\newblock {\em Partial differential equations}, volume~19 of {\em Graduate
  Studies in Mathematics}.
\newblock Oxford University Press, 1998.

\bibitem[Jon81]{Jones}
Peter~W. Jones.
\newblock Quasiconformal mappings and extendability of functions in {S}obolev
  spaces.
\newblock {\em Acta Math.}, 147(1):71--88, 1981.

\bibitem[KRZ15]{KoskelaRajalaZhang}
Pekka Koskela, Tapio Rajala, and Yi~Zhang.
\newblock A geometric characterization of planar {S}obolev extension domains.
\newblock {\em arXiv: 1502.04139 [math.CA]}, 2015.

\bibitem[Mar87]{Marschall}
J{\"u}rgen Marschall.
\newblock The trace of {S}obolev-{S}lobodeckij spaces on {L}ipschitz domains.
\newblock {\em Manuscripta Math.}, 58(1):47--65, 1987.

\bibitem[MOV09]{MateuOrobitgVerdera}
Joan Mateu, Joan Orobitg, and Joan Verdera.
\newblock Extra cancellation of even {C}alder{\'o}n--{Z}ygmund operators and
  quasiconformal mappings.
\newblock {\em J. Math. Pures Appl.}, 91(4):402--431, 2009.

\bibitem[Pra15]{PratsQuasiconformal}
Mart{\'\i} Prats.
\newblock Sobolev regularity of quasiconformal mappings on domains.
\newblock {\em arXiv: 1507.04332 [math.CA]}, 2015.

\bibitem[PT15]{PratsTolsa}
Mart{\'\i} Prats and Xavier Tolsa.
\newblock A {T(P)} theorem for {S}obolev spaces on domains.
\newblock {\em J. Funct. Anal.}, 268(10):2946--2989, 5 2015.

\bibitem[RS96]{RunstSickel}
Thomas Runst and Winfried Sickel.
\newblock {\em Sobolev spaces of fractional order, {N}emytskij operators, and
  nonlinear partial differential equations}, volume~3 of {\em De Gruyter series
  in nonlinear analysis and applications}.
\newblock Walter de Gruyter; Berlin; New York, 1996.

\bibitem[Shv10]{Shvartsman}
Pavel Shvartsman.
\newblock On {S}obolev extension domains in $\mathbb{R}^n$.
\newblock {\em J. Funct. Anal.}, 258(7):2205--2245, 2010.

\bibitem[Ste70]{SteinPetit}
Elias~M. Stein.
\newblock {\em Singular integrals and differentiability properties of
  functions}, volume~30 of {\em Princeton Mathematical Series}.
\newblock Princeton University Press, 1970.

\bibitem[Tol13]{TolsaSharp}
Xavier Tolsa.
\newblock Regularity of {$C^1$} and {L}ipschitz domains in terms of the
  {B}eurling transform.
\newblock {\em J. Math. Pures Appl.}, 100(2):137--165, 2013.

\bibitem[Tri78]{TriebelInterpolation}
Hans Triebel.
\newblock {\em Interpolation theory, function spaces, differential operators},
  volume~18 of {\em North-Holland Mathematical Library}.
\newblock North-Holland, 1978.

\bibitem[Tri83]{TriebelTheory}
Hans Triebel.
\newblock {\em Theory of function spaces}.
\newblock Birkh{\"a}user, reprint (2010) edition, 1983.

\bibitem[Tri92]{TriebelTheoryII}
Hans Triebel.
\newblock {\em Theory of function spaces II}, volume~84 of {\em Monographs in
  Mathematics}.
\newblock Birkh{\"a}user, 1992.

\end{thebibliography}
\end{document}